\documentclass[a4paper, reqno, 11pt]{amsart}

\usepackage[english]{babel}
\usepackage{amsmath}
\usepackage{amssymb}
\usepackage{enumerate}
\usepackage{ifthen}
\usepackage{bbm}
\usepackage{color}
\usepackage{latexsym}
\provideboolean{shownotes} 
\setboolean{shownotes}{true}
\usepackage{hyperref}
\usepackage{graphicx}
%\usepackage[notref,notcite]{showkeys}%mostra i labels
%\usepackage{showkeys}

%\usepackage{sectsty}
%\subsectionfont{\normalfont\large\itshape}
\usepackage{geometry}
\usepackage{inputenc}

\geometry{a4paper, bottom=3cm, left=2.5cm, right=3cm}%margini

\newcommand{\margnote}[1]{
\ifthenelse{\boolean{shownotes}}%
{\marginpar{\raggedright\tiny\texttt{#1}}}%
{}%
}

\newcommand{\hole}[1]{
\ifthenelse{\boolean{shownotes}}%
{\begin{center} \fbox{ \rule {.25cm}{0cm}
\rule[-.1cm]{0cm}{.4cm} \parbox{.85\textwidth}{\begin{center}
\texttt{#1}\end{center}} \rule {.25cm}{0cm}}\end{center}}
{}
}

%%%%%%%%%% Theorems %%%%%%%%%%%
\newtheorem{theorem}{Theorem}[section]
\newtheorem{proposition}[theorem]{Proposition}
\newtheorem{lemma}[theorem]{Lemma}

\theoremstyle{remark}
\newtheorem{remark}[theorem]{Remark}

\newcommand{\R}{\mathbb{R}}

\newcommand{\dive}{\mathop{\mathrm {div}}}

\newcommand{\cof}{\mathop{\mathrm{Cof}}}
\newcommand{\trace}{\mathrm{Tr}}
\newcommand{\We}{We}
\numberwithin{equation}{section}

%\subjclass[????]{Primary: ????, Secondary: ????, ????, ????.}

\keywords{Splash singularity, Oldroyd-B, Viscoelasticity, Existence and Stability}

\begin{document}

\title[]{Splash singularity for a free-boundary incompressible viscoelastic fluid model}

\author{Elena Di Iorio}
\address{GSSI - Gran Sasso Science Institute\\
67100 L'Aquila - 
Italy}

\email{elena.diiorio@gssi.it}

%Author 2
\author{Pierangelo Marcati}
\address{GSSI - Gran Sasso Science Institute\\
67100 L'Aquila -
Italy}
\email{pierangelo.marcati@gssi.it}

%Author 3
\author{Stefano Spirito}
\address{Department of Information Engineering, Computer Science and Mathematics\\ University of L'Aquila\\
67100 L'Aquila - Italy}
\email{stefano.spirito@univaq.it}

\begin{abstract}
In this paper we analyze a 2D free-boundary viscoelastic fluid model of Oldroyd-B type at infinite Weissenberg number. Our main goal is to show the existence of the so-called splash singularities, namely  points where the boundary remains smooth but self-intersects. The combination of existence and stability results allows us to construct a special class of initial data, which evolve in time into self-intersecting configurations. To this purpose we apply the classical conformal mapping method and later we move to the Lagrangian framework, as a consequence we deduce the existence of splash singularities. This result extends the result obtained for the Navier-Stokes equations in \cite{CCFGG2}
\end{abstract}

%lunghezza righe%%
%\setlength{\textwidth}{16 cm}  

\maketitle

\section{Introduction}
\noindent The purpose of  this paper is to initiate a study on the formation of splash type singularities for viscoelastic fluids of Oldroyd-B type at high Weissenberg number. In particular this paper is devoted to the limit case of infinite Weiessenberg number, namely the system obtained as the result of a infinite relaxation limit for stress relaxation time of the fluid. 
Many complex fluids have a quite different behavior with respect to classical Newtonian fluids. A key feature of viscoelastic fluids is the presence of some  memory effects, namely stress tensors in these fluids depend on the flow history, moreover we can observe stress anisotropy. A viscoelastic fluid generates stresses that are not present in a Newtonian fluid, having the same deformation history. Therefore viscoelastic fluids do not flow like their Newtonian counterparts. Viscoelastic fluids are materials which have both viscous and elastic responses to forces, so we need to take care that stresses created in an elastic material stay constant in time for as long as the deformation is present, while stresses in a viscous fluid dissipate on a time scale governed by its viscosity.
There is a natural analogy with models composed by dashpots and springs in particular linear solids and liquids are often represented graphically by a sequence of springs and dashpots, a serial connection of a spring and a dashpot represents a viscoelastic fluid, while the parallel connection represents a viscoelastic solid. Traditionally, these models are called respectively the Maxwell fluid and Kelvin--Voigt solid models, and they can be considered the simplest models of viscoelastic materials.  Unfortunately the equation for the linear Maxwell fluid model is not frame-invariant, hence  to recover the frame invariance Maxwell introduced the  so called upper convective derivation operator which can be understood to be the time derivative calculated in a coordinate system which is translating and deforming with the fluid, such that by definition the upper-convected time derivative of the left Cauchy-Green deformation tensor (Finger tensor) is always zero. The viscoelasticity model obtained in this way is the classical Upper Convective Maxwell (UCM) model. 
\\ 

  The  Oldroyd-B model is then obtained by assuming the total stress tensor as the sum of the pressure, the fluid viscosity and the polymeric contribution to the stress, where the polymeric tensor obeys the UCM model. Numerical simulations have shown the presence of singularities for high Weissenberg number (see for instance \cite{K} and \cite{ThSh}). It can be expected that these emerging singularities lie at the root of some of complications in numerical simulations of viscoelastic fluids using the Oldroyd-B model. There is a vast literature regarding the \emph{high Weissenberg} number problem, see  for instance Chap. 7 of \cite{OP}, for a careful exposition and a very relevant analysis of these problems. 
\noindent  In the case of viscoelastic materials, one possible source of difficulty is related to understand in what extent the presence of the elastic components  could prevent the development of (splash) singularities. 
This paper is actually devoted to show that, in the case of infinite Weissenberg the existence of this type of singularity occurs also for viscoelastic fluids.
\\

  As we said before, following the Maxwell ideas, we may think that incompressible viscoelastic fluids have elastic and viscous components, modeled as linear combination of springs and dashpots and described by the momentum equation,
\begin{equation*}
\bar{\rho}(\partial_t u+(u\cdot\nabla)u)+\nabla p=\dive\tau,
\end{equation*}
\noindent where $\tau=\nu_s(\nabla u+\nabla u^T)+\tau_p$ denotes the stress tensor with $\nu_s$, the solvent viscosity and $\tau_p$, the extra-stress related to the elastic part. From now on $\bar{\rho}=1$.\\
\noindent The stress tensor satisfies the usual Oldroyd-B model structure

\begin{equation}\label{Old-B}
\tau+\lambda\partial_t^{uc}\tau=\nu_0((\nabla u+\nabla u^T) + \lambda_s\partial_t^{uc}(\nabla u+\nabla u^T)),
\end{equation}
where

\begin{itemize}
\item $\partial_t^{uc}\tau=\partial_t\tau+ (u\cdot\nabla)\tau-\nabla u \tau-\tau\nabla u^T$ denotes the upper convective time derivative,
\item $\nu_0=\nu_s+\nu_p$, denotes the material viscosity, $\nu_s$ the solvent viscosity and  $\nu_p$ the polymeric viscosity respectively,
\item $\lambda$ the relaxation time,
\item $\lambda_s=\frac{\nu_s}{\nu_0}\lambda$.
\end{itemize}
\medskip

\noindent Therefore by the definition of the total stress tensor $\tau$, we obtain an equation for the extra-stress $\tau_p$
\begin{equation}\label{extra-stress}
\lambda\partial_t^{uc} \tau_p+\tau_p= \nu_p(\nabla u+\nabla u^T).
\end{equation}
\medskip

\noindent The constitutive law (\ref{extra-stress}) is coupled with the equations of conservation of mass and momentum. So we get the formulation for the Oldroyd-B model.

\begin{equation}\label{viscosys}
\left\{\begin{array}{lll}
\partial_t u+u\cdot\nabla u +\nabla p= \nu_s\Delta u+\textrm{div}\tau_p \\[1mm]
\partial_t^{uc}\tau_p=-\frac{1}{\lambda}\tau_p+\frac{{\nu}_p}{\lambda}(\nabla u+\nabla u^T)\\[1mm]
\textrm{div} u=0.\\
\end{array}\right.
\end{equation}
\medskip

\noindent In the equation (\ref{Old-B}), we introduced the relaxation time $\lambda$. Indeed, we had to consider the presence of memory effects in viscoelastic materials,  since one of the main features of these materials is that whenever the material is deformed it will try to revert to its original shape, hence the relaxation time $\lambda$ appears naturally inside the memory function. Consequently, the behavior of this Non-Newtonian fluid depends on $\lambda$ and specifically on to the ratio between $\lambda$ and $t_f$, the typical time-scale of the flow, given by $t_f\simeq\sqrt{\frac{1}{2}(\trace(\nabla u+\nabla u^T))^2}$. The ratio $\We=\frac{\lambda}{t_f}$ is the so-called Weissenberg number, see \cite{R}. When  $\lambda$ is greater than $t_f$, the elastic effects are dominant on the other way around the viscous do. We are therefore motivated to study the system (\ref{viscosys}) for high Weissenberg number ($\We\rightarrow\infty$). As a good approximation of this problem, we take the limit case given by the following system

\begin{equation}\label{newviscosys}
\left\{\begin{array}{lll}
\displaystyle\partial_t u+u\cdot\nabla u +\nabla p= \nu_s\Delta u+\dive\tau_p \\[3mm]
\displaystyle \partial_t\tau_p+(u\cdot\nabla)\tau_p-(\nabla u)\tau_p-\tau_p(\nabla u)^T=0\\[3mm]
\displaystyle\dive u=0.\\
\end{array}\right.
\end{equation}
\medskip

\noindent Let us denote with $\alpha\in\mathbb{R}^2$ the material point in the reference configuration and let $X(t,\alpha)$ be the flux associated to the velocity $u$. The following system of ODEs defines the particle-trajectories 

\begin{equation}
\left\{\begin{array}{lll}
\displaystyle\frac{d}{dt}{X}(t,\alpha)=u(t,X(t,\alpha))\\ [3mm]
X(0,\alpha)=\alpha,
\end{array}\right.
\end{equation}
\medskip

\noindent furthermore the deformation gradient $G$ is defined by  
\begin{equation}\label{lag-def-grad}
G(t,\alpha)=\frac{\partial X}{\partial\alpha}(t,\alpha).
\end{equation}

\noindent In Eulerian framework we define the deformation tensor by $F(t,X(t,\alpha))=G(t,\alpha)$, then by the chain rule the deformation gradient satisfies the following transport equation
\begin{equation}\label{transport}
\partial_t F+u\cdot\nabla F=\nabla u F.
\end{equation}
\noindent If we set  $\tau_p(t,X)=F\tau_0 F^T$  it follows   
$$\partial_t\tau_p+(u\cdot\nabla)\tau_p-(\nabla u)\tau_p-\tau_p(\nabla u)^T=0,$$
%then $\tau_p(t,X)=F\tau_0 F^T$ (which is positive definite).\\
then we can replace the equation in $\tau_p$ by \eqref{transport}. In this way it is possible to have a closed system in $u$ and $F$, moreover as long as $\tau_p(0,X)$ is positive definite so is $\tau_p(t,X)$.

\noindent The system in $u$ and $F$, is complemented with suitable boundary conditions, given by the static equilibrium of the force fields at the interface, namely

\begin{equation}\label{Eulsys}
\left\{\begin{array}{lll}
\displaystyle\partial_t F+u\cdot\nabla F=\nabla u F
\hspace{4cm} \textrm{ in }\Omega(t) \\ [1mm]
\displaystyle\partial_t u +u\cdot\nabla u -\Delta u +\nabla p= \dive(FF^{T}) \\[1mm]
\displaystyle\dive u=0,\\ [1mm]
\displaystyle (-p\mathcal{I}+(\nabla u+\nabla u^{T})+(FF^{T}-\mathcal{I}))n=0 \hspace{0.9cm}\textrm{on}\hspace{0.1cm}\partial\Omega(t)\\ [1mm]
\displaystyle u(t)_{|t=0}=u_0,\hspace{0.1cm} F(t)_{|t=0}=F_0\hspace{3.1cm} \textrm{in}\hspace{0.3cm}\Omega_0. \\
\end{array}\right.
\end{equation}
\medskip

\noindent 
The variable domain is given by $\Omega(t)=X(t,\Omega_0)$, where $\Omega_0 \subset \R^2$ denotes the initial domain.
We use the following notation $(\dive M)_j=\sum_{i}\partial_i M_{ij}$, for any matrix valued function $M$.
Since $u$ is divergence-free the first equation in the previous system implies the conservation of  $\det F$, hence in order to respect the principle of the non interpenetration of matter $\det F>0$, therefore it is not restrictive to assume $\det F_0 =1$.  For further details on the incompressibility of $F$, we refer to \cite{G} and \cite{LeLZ}.
\\

\noindent There is a vast literature regarding the analysis of initial value boundary problem for the Oldroyd-B model \cite{BGS},  \cite{GS}, \cite{LeMeur} and the infinite Weissenberg system \cite{CZ}, \cite{LLZ}, \cite{LW} and \cite{XZZ}. In particular, we recall that for co-rotational Oldroyd model in \cite{LM1}, the global existence of weak solutions is proved for general initial conditions. However, for the general Oldroyd-B system the global existence of weak solutions is still open. Concerning the infinite Weissenberg system we mention the result \cite{LZ}, the local existence and global existence for data sufficiently close to the equilibrium in two and three dimensions are proved.\\
\noindent The main result of this paper is the following theorem
\begin{theorem}
There exists a time $t^*\in [0,T]$ such that the interface $\partial\Omega(t^*)$ self-intersects in one point (splash singularity).
\end{theorem}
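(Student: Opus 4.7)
The plan is to adapt the strategy developed for the Navier-Stokes free-boundary problem in \cite{CCFGG2} to the viscoelastic system \eqref{Eulsys}. Three ingredients are needed: (i) a conformal unfolding that resolves a prospective self-intersection; (ii) local well-posedness of the transformed system in a Lagrangian frame; and (iii) a stability theorem allowing one to perturb initial data while controlling the evolution.

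First I would choose a branch of $P(z)=\sqrt{z-q}$, with $q\in\R^2$ a candidate splash point and a branch cut chosen so that $P^{-1}$ is smooth and single-valued on a neighbourhood of the candidate splash domain. Under this transformation, a domain $\Omega(t)$ that self-intersects at $q$ maps to a domain $\widetilde\Omega(t)=P(\Omega(t))$ with smooth, non-self-intersecting boundary. Rewriting \eqref{Eulsys} in the conformal variable $w=P(z)$ produces an equivalent system on $\widetilde\Omega(t)$ with the conformal factor $|(P^{-1})'(w)|^{2}$ entering the viscous and transport operators as a variable coefficient, while the algebraic structure of the problem (incompressibility, the compatibility condition $\dive F=0$, and the stress balance at the boundary involving $FF^{T}$) is preserved. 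A subsequent change of coordinates to a Lagrangian frame based on the reference domain $\widetilde\Omega_{0}=P(\Omega_{0})$ fixes the boundary and turns the transport equation \eqref{transport} for the deformation gradient $G$ into a multiplicative ODE along the flow.

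Second I would establish short-time existence and uniqueness for the conformal-Lagrangian system in a high-regularity Sobolev class, via a fixed-point iteration based on a Stokes-type problem for the velocity (with the elastic stress $\dive(FF^{T})$ and the conformal coefficients treated as data) coupled with the Lagrangian ODE for $G$. The closure of the energy estimate hinges on elliptic regularity for the pressure from the divergence-free condition and on the fact that $F$ can be controlled one derivative below $u$, owing to the transport structure. A perturbation argument on the linearised system should then yield a stability statement: two initial data that are $\varepsilon$-close in the chosen norm produce solutions which remain $C\varepsilon$-close on a uniform time interval, including the geometry of the moving boundary.

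Finally, the splash is constructed as follows. Pick a smooth arc $\widetilde\Gamma^{*}$ in the conformal variable so that $\Gamma^{*}:=P^{-1}(\widetilde\Gamma^{*})$ self-intersects only at $q$, and choose smooth data $(\widetilde u^{*},\widetilde F^{*})$ on the interior such that, transported back via $P^{-1}$, the two Eulerian branches of $\partial\Omega$ meeting at $q$ are separating at $t=0$. Local existence then yields a conformal-Lagrangian solution on some $[0,\delta]$ whose Eulerian image is a splash at $t=0$ and a smooth domain for $t\in(0,\delta]$. A small perturbation of the initial data that pulls the two Eulerian branches slightly apart while reversing the sign of the normal component of the velocity near the near-splash region gives, by the stability theorem, a solution whose Eulerian domain is smooth at $t=0$ but whose two branches must touch at some $t^{*}\in(0,\delta]$ by continuity of the inter-branch distance. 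This produces the desired splash singularity. The main obstacle I foresee is the simultaneous treatment of the parabolic velocity equation and the hyperbolic equation for $F$ in the free-boundary, conformally-transformed setting: one has to track the nonlinear boundary stress involving $FF^{T}$ with the right loss of regularity, and guarantee that the Lagrangian trajectories never reach the branch point of $P$, so that the conformal framework remains valid throughout the short time interval on which stability is invoked.
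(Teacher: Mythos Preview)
Your overall architecture---conformal unfolding via a branch of $\sqrt{z}$, passage to a fixed Lagrangian domain, short-time well-posedness by iteration on a Stokes-type linearisation coupled to the ODE for $G$, and a stability estimate---matches the paper. The gap is in the final construction step.

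You choose the initial velocity so that the two Eulerian branches \emph{separate}, making the unperturbed solution a splash at $t=0$ and smooth for $t\in(0,\delta]$. You then propose a perturbation that ``pulls the branches slightly apart while reversing the sign of the normal component of the velocity''. But reversing the normal velocity is an $O(1)$ modification, not an $O(\varepsilon)$ one, so the stability theorem gives no control over the perturbed evolution. If instead you keep the separating velocity (which is what a genuinely small perturbation does), the perturbed solution also separates and never touches: there is nothing for the continuity argument to catch. This is precisely the obstruction to importing the time-reversal trick from the Euler case \cite{CCFGG1}; the parabolic system is not reversible.

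The paper runs the mechanism in the opposite direction. The initial velocity is chosen with normal component $\tilde u_0\cdot\tilde n>0$ at the two pre-images $\tilde z_1,\tilde z_2$ of the splash point, so that the unperturbed conformal evolution carries $\tilde\Omega_0$ to a configuration $\tilde\Omega(\bar t)$ whose inverse image $P^{-1}(\partial\tilde\Omega(\bar t))$ is \emph{genuinely self-intersecting on an arc} (type (c)), not merely touching at a point. The only perturbation is a rigid translation $\tilde\Omega_0\mapsto\tilde\Omega_0+\varepsilon b$ of the initial domain, chosen so that $P^{-1}(\tilde\Omega_\varepsilon(0))$ is regular; the velocity and deformation data are left fixed. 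Stability then forces $P^{-1}(\partial\tilde\Omega_\varepsilon(\bar t))$ to remain self-intersecting, and continuity in $t$ yields the splash time $t^*\in(0,\bar t)$. One further ingredient you omit: one must check that a velocity with the prescribed normal sign can satisfy the tangential compatibility condition $n_0^\perp\bigl((\nabla u_0+\nabla u_0^T)+(F_0F_0^T-\mathcal I)\bigr)n_0=0$. The paper handles this by a stream-function ansatz $\psi=\psi_0(s)+\lambda\psi_1(s)+\tfrac12\lambda^2\psi_2(s)$ near $\partial\Omega_0$, showing that the elastic contribution can always be absorbed into $\psi_2$ and therefore does not constrain $u_0\cdot n=\partial_s\psi_0$.
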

\noindent Recent results on the existence of splash singularity have been obtained by Castro, C\'ordoba, Fefferman, Gancedo and G\'omez-Serrano in \cite{CCFGG2}, for the  2D free boundary Navier-Stokes equations and by Coutand and Shkoller in \cite{CS2}, for the same problem in higher dimensions. Similar results are proved also for the free boundary Euler equations  in \cite{CCFGG1}, \cite{CCFGG3} and \cite{CS1}, with different techniques, since this problem is reversible in time. By using the results of this paper, announced in \cite{DMS1}, the authors proved the existence of splash singularities also for a model with a non linear Piola-Kirchhoff stress  in \cite{DMS}. Moreover, the existence of splash singularities in the case of finite Weissenberg number has been proved in \cite{DMS11}\\ 
\medskip

\noindent The key ingredient for the proof is the use of the classical method of the conformal mapping, see \cite{CCG}. This method   has been used recently for this kind of problems by Castro, C\'ordoba, Fefferman, Gancedo and G\'omez-Serrano in \cite{CCFGG2}. We introduce the map $P(z)=\tilde{z}$, for $z\in\mathbb{C} \setminus \Gamma$, is defined as a branch of $\sqrt{z}$, where $\Gamma$ is a line, passed through the splash point (see fig.\ref{spl}). We take $z\in\mathbb{C} \setminus \Gamma$ in order to make $\sqrt{z}$ an analytic function and to have $P^{-1}(\tilde{z})=\tilde{z}^2$, an entire function. The idea to prove our theorem is to reduce the system  (\ref{Eulsys}), in Eulerian coordinates, to a system in Lagrangian coordinates in order to have a fixed boundary, as in the paper of Beale \cite{B}. The second key observation regards the tangential behavior of the deformation gradient at the Lagrangian boundary. Therefore, it shows that the way the viscoelastic deformation acts to the boundary does not prevent the natural tendency of the fluid to form splash singularities. The idea hidden in the proof is inspired by the geometric construction in \cite{CCFGG2}, as explained below.

\begin{itemize}
\item  Let the initial domain  $\Omega_0$ be a non regular domain as (b), for this reason we use the conformal map $P$ and  by projection we get $\tilde{\Omega}_0$, a non-splash type domain.
\item If $\{\tilde{\Omega}_0, \tilde{u}(0,\cdot), \tilde{p}(0,\cdot), \tilde{F}(0,\cdot)\}$ are smooth we can prove the local existence of a solution $\{\tilde{\Omega}(t), \tilde{u}(t,\cdot), \tilde{p}(t,\cdot), \tilde{F}(t,\cdot)\}$, for $t\in[0,T]$, and $T$ small enough. (Section \ref{sec4})
\item By a suitable choice of the initial velocity, in particular $\tilde{u}(0,\tilde{z}_1)\cdot n>0,$ $\tilde{u}(0,\tilde{z}_2)\cdot n>0$ (Section \ref{sec6}) there exists $\bar{t}\in(0,T]$ such that $P^{-1}(\partial\tilde{\Omega}(\bar{t}))$ is \textit{self-intersecting}, as in the case (c). This solution lives only in the complex plane so it cannot be reversed into a solution in the non tilde complex plane, by $P^{-1}$. Hence it is not sufficient to prove the existence of splash singularity.
\item To solve the problem in the non-tilde domain, we take a one-parameter family\\
 $\{\tilde{\Omega}_{\varepsilon}(0),\tilde{u}_{\varepsilon}(0),\tilde{p}_{\varepsilon}, \tilde{F}_{\varepsilon}(0)\}$, with $\tilde{\Omega}_{\varepsilon}(0)=\tilde{\Omega}_0+\varepsilon b$ and  $|b|=1$, such that  $P^{-1}(\partial\tilde{\Omega}_{\varepsilon}(0))$ is regular, then by the inverse mapping there exists a local in time smooth solution $\{{\Omega}_{\varepsilon}(t), {u}_{\varepsilon}(t,\cdot), {p}_{\varepsilon}(t,\cdot), {F}_{\varepsilon}(t,\cdot)\}$, in the non tilde complex plane.
\item Then, for sufficiently small $\varepsilon$, by stability we get\\
\begin{center}
$\textrm{dist}(\partial\tilde{\Omega}_{\varepsilon}(\bar{t}),\partial\tilde{\Omega}(\bar{t}))\leq \varepsilon\quad\textrm{ hence }\quad
P^{-1}(\partial\tilde{\Omega}_{\varepsilon}(\bar{t}))\sim P^{-1}(\partial\tilde{\Omega}(\bar{t}))$
\end{center}
\medskip

\noindent and so $P^{-1}(\partial\tilde{\Omega}_{\varepsilon}(\bar{t}))$ self-intersects.
\item Since $P^{-1}(\tilde{\Omega}_{\varepsilon}(0))$ is regular of type (a) and $P^{-1}(\tilde{\Omega}_{\varepsilon}(\bar{t}))$ is self-intersecting domain of type (c), then there exists a time $t^*_{\varepsilon}$ such that $P^{-1}(\tilde{\Omega}_{\varepsilon}(t^*_{\varepsilon}))$ has a splash singularity.
\end{itemize}

\begin{figure}[htbp]\label{spl}
\centering
\includegraphics[scale=0.8] {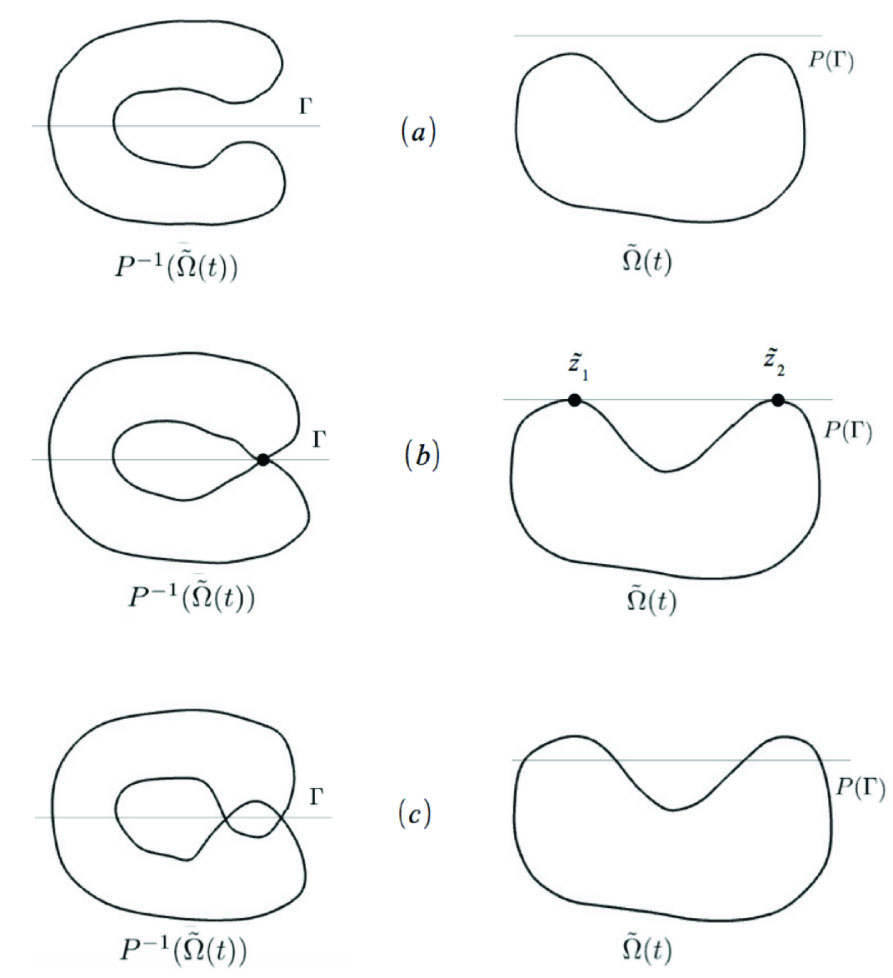}
\caption{Possibilities for $P^{-1}(\tilde{\Omega}(t))$}
\end{figure}
\newpage

\subsection{Outline of the paper}
In Section 2 we introduce the spaces we will use for the estimates and some important lemmas for proving the local existence and the stability estimates. In Section 3 we describe the free boundary system, we define all the variables and the transformations from $\Omega$ into $\tilde{\Omega}$ by using the conformal map and we perform the change from  Eulerian into Lagrangian variables in order to deal with a fixed boundary problem. Section 4 is devoted to solve the Oldroyd-B model through a fixed point argument, using some techniques of \cite{B}. In this section there is also our first important result, that is the proof of the local existence of smooth solutions. In Section 5 we show the stability estimates, necessary for the proof of the existence of splash singularity. In the last section, Section 6, we choose a suitable initial velocity such that the splash occurs, thus we get that even if in viscoelastic model there is the presence of the elastic stress tensor, hidden in the deformation gradient, we can obtain a finite time in which the splash singularity arises.

\section{Conformal and lagrangian transformations}\label{sec2}
\noindent We deal with the following free boundary incompressible viscoelastic fluid model

\begin{equation}\label{ES}
\left\{\begin{array}{lll}
\displaystyle\partial_t F+u\cdot\nabla F=\nabla u F
\hspace{4cm} \textrm{ in }\Omega(t) \\ [1mm]
\displaystyle\partial_t u +u\cdot\nabla u -\Delta u +\nabla p= \dive(FF^{T}) \\[1mm]
\displaystyle\dive u=0,\\ [1mm]
\displaystyle (-p\mathcal{I}+(\nabla u+\nabla u^{T})+(FF^{T}-\mathcal{I}))n=0 \hspace{0.9cm}\textrm{on}\hspace{0.1cm}\partial\Omega(t)\\ [1mm]
\displaystyle u(t)_{|t=0}=u_0,\hspace{0.1cm} F(t)_{|t=0}=F_0\hspace{3.1cm} \textrm{in}\hspace{0.3cm}\Omega_0, \\
\end{array}\right.
\end{equation}
\medskip

\noindent where the domain $\Omega(t)=X(t,\Omega_0)$ moves according to the flux, which solves 
\begin{equation}
\left\{\begin{array}{lll}
\displaystyle\frac{d}{dt}{X}(t,\alpha)=u(t,X(t,\alpha))\\ [3mm]
X(0,\alpha)=\alpha,
\end{array}\right.
\end{equation}
\noindent  and $\Omega_0$, $u_0$ and $F_0$ are prescribed initial data which satisfy the compatibility conditions

\begin{equation}\label{compatibility}
\left\{\begin{array}{lll}
\displaystyle \dive{u_0}=0, \hspace{0.1cm} \det{F_0}=1  \hspace{3.35cm}\textrm{ in } \Omega_0 \\ [1mm]
\displaystyle n_0^{\perp}((\nabla u_0 +\nabla  u_0^T) + (F_0 F_0^T -\mathcal{I}))n_0=0 \hspace{0.4cm} \textrm{ on } \partial\Omega_0.\\
\end{array}\right.
\end{equation}
\medskip

\noindent Let us apply the conformal map $P$ and the change of coordinates from $\Omega\rightarrow\tilde{\Omega}=P(\Omega)$. The trasformed velocity field is defined by
\medskip

$$\tilde{u}(t,\tilde{X})=u(t,P^{-1}(\tilde{X})), \quad\textrm{hence}\quad  u(t,X)=\tilde{u}(t,P(X)).$$

\medskip

\noindent Similarly for the deformation gradient $F$ we have 

$$\tilde{F}(t,\tilde{X})=F(t,P^{-1}(\tilde{X})),\quad\textrm{hence}\quad F(t,X)=\tilde{F}(t,P(X)).$$
\medskip

\begin{remark}
\noindent Defining $J^P_{kj}=\partial_{X_j}P_k(P^{-1}(\tilde{X}))$, for the derivatives we have

$$\partial_{X_j} u_i(t,X)=\partial_{\tilde{X}_k}\tilde{u}_i(t,P(X) )\partial_{X_j} P_k(X)$$
and therefore
\begin{equation}\label{remark1}
\partial_{X_j} u_i(t,P^{-1}(\tilde{X}))=J^P_{kj}\partial_{\tilde{X}_k}\tilde{u}_i(t,\tilde{X}).
\end{equation}
\end{remark}
\medskip

\noindent We start with the transformation in conformal coordinates

\begin{lemma}
Let $P$ the conformal map described above and $\displaystyle Q^2=\left|\frac{dP}{dz}(P^{-1}(\tilde{X}))\right|^2$. Under this transformation the system (\ref{ES}) becomes

\begin{equation}\label{CES}
\left\{\begin{array}{lll}
\displaystyle\partial_t \tilde{F}+(J^P\tilde{u}\cdot\nabla_{\tilde{X}})\tilde{ F}=\nabla_{\tilde{X}}\tilde{u}J^P \tilde{F} \hspace{5.2cm} \textrm{ in } \tilde{\Omega}(t) \\[1mm]
\displaystyle\partial_t\tilde{ u}+(J^P\tilde{u}\cdot\nabla_{\tilde{X}})\tilde{ u} -Q^2\Delta\tilde{ u} +(J^P)^T\nabla_{\tilde{X}}\tilde{ p}= (J^P\tilde{F}\cdot\nabla_{\tilde{X}})\tilde{F}\\[1mm]
\displaystyle \trace(\nabla\tilde{u}J^P)=0\\ [1mm]
\displaystyle (-\tilde{p}\mathcal {I}+(\nabla\tilde{ u}J^P+(\nabla\tilde{ u}J^P)^{T})+(\tilde{F}\tilde{F}^{T}-\mathcal{I}))(J^P)^{-1}\tilde{n}=0 \hspace{1.35cm}\textrm{ on }\partial\tilde{\Omega}(t)\\[1mm]
\displaystyle \tilde{u}(t)_{|t=0}=\tilde{u}_0,\hspace{0.1cm} \tilde{F}(t)_{|t=0}=\tilde{F}_0\hspace{6cm}\textrm{ in } \tilde{\Omega}_0.
\end{array}\right.
\end{equation}
\end{lemma}

\begin{proof}
\noindent The proof follows easily from (\ref{remark1}). For instance $(u\cdot\nabla) u$ becomes\\ $(J^P\tilde{u}\cdot\nabla_{\tilde{X}})\tilde{u}$, and the same for all the other terms with derivatives. The most difficult term is $\Delta u$, by a direct calculation and by using that $P$ satisfies the Cauchy-Riemann equations we get $Q^2\Delta\tilde{u}$. Thus we obtain \eqref{CES}.
\end{proof}
\bigskip

\noindent The next step is to pass from Eulerian to Lagrangian coordinates thus from a free boundary problem to a fixed boundary problem. First of all, we look at the equation for the flux 
\begin{equation}\label{LagFlux}
\left\{\begin{array}{lll}
\displaystyle \frac{d}{dt}\tilde{X}(t,\tilde{\alpha})=J^P(\tilde{X}(t,\tilde{\alpha}))\tilde{u}(t,\tilde{X}(t,\tilde{\alpha}))\hspace{1cm}\textrm{in}\hspace{0.3cm}\tilde{\Omega}(t)\\[3mm]
\displaystyle\tilde{X}(0,\tilde{\alpha})=\tilde{\alpha}\hspace{5.15cm} \textrm{in}\hspace{0.3cm}\tilde{\Omega}(0)
\end{array}\right.
\end{equation}

\noindent The Lagrangian variables are defined as follows
\medskip

\begin{equation}\label{lagvar}
\left\{\begin{array}{lll}
\tilde{v}(t,\tilde{\alpha})=\tilde{u}(t,\tilde{X}(t,\tilde{\alpha}))\\[2mm]
\tilde{q}(t,\tilde{\alpha})=\tilde{p}(t,\tilde{X}(t,\tilde{\alpha}))\\[2mm]
\tilde{G}(t,\tilde{\alpha})=\tilde{F}(t,\tilde{X}(t,\tilde{\alpha})).
\end{array}\right.
\end{equation}
\medskip

\begin{remark}
\noindent Defining by $\tilde{\zeta}_{lj}$ the $lj$-th element of $(\nabla_{\tilde{\alpha}}\tilde{X})^{-1}$, for the derivatives we have 

\begin{equation}\label{remark2}
\partial_{\tilde{X}_j}\tilde{u}_i=\tilde{\zeta}_{lj}\partial_l\tilde{v}_i, 
\end{equation}

\noindent where the derivatives are with respect to $\tilde{\alpha}$.
\end{remark}
\medskip

\begin{lemma}
Under the change of coordinates (\ref{lagvar}), the system (\ref{CES}) on the fixed domain $[0,T]\times \tilde{\Omega}_0$  becomes 

\begin{equation}\label{CLS}
\left\{ \begin{array}{lll}
\displaystyle\partial_t\tilde{G}=J^P(\tilde{X})\tilde{\zeta}\nabla_{\tilde{\alpha}}\tilde{v}\tilde{G} \\[3mm]

\displaystyle \partial_t \tilde{v}-Q^2(\tilde{X})\tilde{\zeta}\nabla_{\tilde{\alpha}}(\tilde{\zeta}\nabla_{\tilde{\alpha}}\tilde{v})+(J^P(\tilde{X}))^T\tilde{\zeta}\nabla_{\tilde{\alpha}}\tilde{q}=J^P(\tilde{X})\tilde{G}\tilde{\zeta}\nabla_{\tilde{\alpha}}\tilde{G} \\[3mm]

\displaystyle\trace(\nabla_{\tilde{\alpha}}\tilde{v}(\nabla_{\tilde{\alpha}}\tilde{X})^{-1}A(\tilde{X}))=0 \\[3mm]

\displaystyle (-\tilde{q}\mathcal{I}+((\nabla_{\tilde{\alpha}}\tilde{v}(\nabla_{\tilde{\alpha}}\tilde{X})^{-1}J^P(\tilde{X}))+(\nabla_{\tilde{\alpha}}\tilde{v}(\nabla_{\tilde{\alpha}}\tilde{X})^{-1}J^P(\tilde{X}))^{T}+\\[2mm]
\hspace{4.4cm}+(\tilde{G}\tilde{G}^{T}-\mathcal{I}))(J^P)^{-1}(\tilde{X})\nabla_{\Lambda}\tilde{X}\tilde{n_0}=0 \\[3mm]
\tilde{v}(\tilde{\alpha},0)=\tilde{v}_0(\tilde{\alpha})=\tilde{u}_0(\tilde{\alpha}),\hspace{0.1cm} \tilde{G}(\tilde{\alpha},0)=\tilde{G}_0(\tilde{\alpha})=\tilde{F}_0(\tilde{\alpha}).
\end{array}\right.
\end{equation}
\medskip

\noindent where $\nabla_{\Lambda}\tilde{X}=-\Lambda\nabla\tilde{X}\Lambda$, with $\Lambda=\left(\begin{matrix} 0 & -1 \\ 1& 0\end{matrix}\right)$,  this is due to the fact that $\tilde{n}=-\Lambda J^P_{|\partial\tilde{\Omega}(t)}\Lambda n$.
\end{lemma}
\medskip

\begin{proof}

\noindent We only consider the equation for the deformation gradient, in more details the transformation of $\partial_t\tilde{F}$. For this term we have
\begin{align*}
\partial_t \tilde{G}(t,\tilde{\alpha})&=\frac{d}{dt}\tilde{F}(t,\tilde{X}(t,\tilde{\alpha}))=\partial_t\tilde{F}(t,\tilde{X}(t,\tilde{\alpha}))+\nabla_{\tilde{X}}\tilde{F}(t,\tilde{X}(t,\tilde{\alpha}))\frac{d}{dt}\tilde{X}(t,\tilde{\alpha})\\[2mm]
&=\partial_t\tilde{F}(t,\tilde{X}(t,\tilde{\alpha}))+\nabla_{\tilde{X}}\tilde{F}(t,\tilde{X}(t,\tilde{\alpha}))J^P(\tilde{X}(t,\tilde{\alpha}))\tilde{u}(t,\tilde{X}(t,\tilde{\alpha})).
\end{align*}
\medskip

\noindent The same is for $\partial_t\tilde{v}(t,\tilde{\alpha})$. For the other  terms when a derivative appears we have to apply (\ref{remark2}). Then  we get  immediately the system in conformal lagrangian coordinate (\ref{CLS}).
\end{proof}

\section{Function spaces and preliminary lemmas}\label{sec3}
\noindent In this section we want to introduce the \textit{Beale spaces} \cite{B} that we will use in the successive sections. 

\begin{align*}
&\mathcal{K}^s_{(0)}([0,T]; \Omega)= L^2([0,T]; H^s( \Omega))\cap H^{\frac{s}{2}}_{(0)}([0,T]; L^2( \Omega)),\\[2mm]
&\mathcal{K}^s_{pr(0)}([0,T]; \Omega)=\{q\in L^{\infty}([0,T]; \dot{H}^1( \Omega)): \nabla q\in\mathcal{K}^{s-1}_{(0)}([0,T]; \Omega), q\in\mathcal{K}^{s-\frac{1}{2}}_{(0)}([0,T]; \partial\Omega)\},\\[2mm]
&\mathcal{\bar{K}}^{s}_{(0)}([0,T]; \Omega)= L^2([0,T];H^s(\Omega)) \cap H^{\frac{s+1}{2}}_{(0)}([0,T]; H^{-1}(\Omega)),\\[2mm]
&\mathcal{F}^{s+1,\gamma}([0,T]; \Omega)= L^{\infty}_{\frac{1}{4}}([0,T]; H^{s+1}( \Omega)) \cap H^2_{(0)}([0,T]; H^{\gamma}( \Omega)),\hspace{0.3cm}\textrm{for}\hspace{0.2cm} s-1-\varepsilon<\gamma<s-1,
\end{align*}
\noindent with 

$$\|f\|_{L^{\infty}_{\frac{1}{4}}H^s}=\sup\limits_{t\in [0,T]}t^{-\frac{1}{4}}\|f(t)\|_{H^s}.$$
\medskip

\noindent In the previous definition there is the presence of the Sobolev space with fractional derivatives in time, $H^s_{(0)}([0,T])$, for $0<s<1$. In \cite{B}, Beale defines it as the interpolated space between $L^2([0,T])$ and $H^1_{(0)}([0,T])$, through the operator $S=(1-\partial_t^2)$, with domain $D(S)=\{v\in H^2([0,T]): v(0)=0, \partial_t v(T)=0\}$ and the $H^s_{(0)}([0,T])-$norm is the graph norm of the operator $S^{\frac{s}{2}}$. In particular $v\in  H^s_{(0)}([0,T])$ if $v\in L^2([0,T])$ and

\begin{align*}
\|v\|_{H^s_{(0)}}^2=\sum_{n=0}^{\infty}\left(\int_0^T v(t)\sin\left(\frac{(2n+1)\pi}{2T}t\right)\sqrt\frac{2}{T} \,dt\right)^2\left(\frac{(2n+1)\pi}{2T}\right)^{2s}<\infty.
\end{align*}

\medskip

\noindent For larger exponents, the definition of $H^{s+m}_{(0)}([0,T])$, for $0<s<1$ and $m\in\mathbb{N}$ is 

\begin{center}
$\{v\in H^{s+m}([0,T]):\partial_t^k v(0)=0, k=0,1,\ldots,m-1\},\hspace{0.3cm}\textrm{with}\hspace{0.3cm}\partial_t^m v\in H^s_{(0)}([0,T])$.
\end{center}
\medskip

\noindent This space has the following norm

$$\|v\|_{H^{s+m}_{(0)}}^2=\sum_{k=0}^{m-1}\|\partial_t^k v\|_{ L^2([0,T])}^2+\|\partial_t^m v\|_{ H^s_{(0)}([0,T])}^2.$$
\bigskip

\noindent These spaces are important for the use of the following embedding theorems and interpolation estimates, in order to get constants independent of time. For details, see \cite{B} and \cite{LM}.
\bigskip

\begin{lemma}\label{extentheorem}
Let $B$ a Hilbert space
\begin{enumerate}
\item For $s\geq 0$, there is a bounded extension operator from $H^s((0,T);B)\rightarrow H^s((-\infty,\infty);B)$.
\item For $0\leq s\leq 2$ , $s-\frac{1}{2}$ is not an integer, there is an extension operator from
$$\left\lbrace v\in H^s((0,T);B); \partial_t^k v(0)=0, 0\leq k < s-\frac{1}{2}\right\rbrace \rightarrow H^s((-\infty,\infty);B)$$
with a norm bounded independently on $T$. Moreover, if $Ev$ is the extention of $v$ then 

$$\|Ev\|_{ H^s((-\infty,\infty);B)}\leq C \|v\|_{ H_{(0)}^s((0,T);B)}$$
\end{enumerate}
\end{lemma}
\bigskip

\begin{lemma}\label{lem1}
Suppose $0\leq r\leq 4$.
\begin{enumerate}
\item The Identity extends to a bounded operator
$$\mathcal{K}^r((0,T);\Omega)\rightarrow H^p(0,T)H^{r-2p}(\Omega),$$
$p\leq \frac{r}{2}$.
\item If $r$ is not an odd integer, the restriction of this operator to the subspace with $\partial_t^k v(0)=0$, $0\leq k< \frac{r-1}{2}$ is bounded independently on $T$, indeed
\begin{equation*}
\|v\|_{H^p_{(0)}H^{r-2p}}\leq C \|v\|_{\mathcal{K}^r_{(0)}}.
\end{equation*}
\end{enumerate}
\end{lemma}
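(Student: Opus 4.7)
The plan is to establish part (1) by interpolation between the two components of the definition of $\mathcal{K}^r$, and then to obtain the $T$-independence in part (2) by showing that the vanishing conditions permit extension by zero.

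First, the endpoints $p=0$ and $p=r/2$ of part (1) follow immediately from the definition $\mathcal{K}^r = L^2_t H^r_x \cap H^{r/2}_t L^2_x$. For intermediate values of $p$, I would apply the Lions--Magenes interpolation theorem (cited as \cite{LM}) to the intersection, using that the anisotropic Sobolev scale satisfies
$$[L^2_t H^r_x,\ H^{r/2}_t L^2_x]_\theta = H^{\theta r/2}_t H^{(1-\theta)r}_x$$
with $\theta = 2p/r \in [0,1]$, which lands exactly in $H^p_t H^{r-2p}_x$. The intersection property of interpolation functors then yields the stated bound on $(0,T) \times \Omega$, with a constant that a priori may depend on $T$ through the extension operator used to pass between $\mathbb{R}$ and the finite interval.

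For part (2), the main task is to remove the $T$-dependence. The standard device, used throughout \cite{B}, is to replace the generic extension across $t=0$ by the trivial extension by zero. This preserves membership in $H^s_t L^2_x$ precisely when the necessary traces $\partial_t^k v(0)$ vanish for $k < s - \tfrac{1}{2}$; applied to $s = r/2$, one needs $\partial_t^k v(0) = 0$ for $0 \leq k < (r-1)/2$, exactly matching the hypothesis. The same condition makes extension by zero continuous on the target $H^p_t H^{r-2p}_x$ for all admissible $p$. The exclusion of odd integer $r$ corresponds to the classical failure of extension by zero at half-integer Sobolev exponents (the Hardy-type obstruction at the critical trace regularity). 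Once $v$ has been extended by zero to $t<0$, one extends past $t=T$ by a universal bounded extension onto a fixed reference interval, producing an extension with norm comparable to the original by a constant independent of $T$. The interpolation argument of part (1) may then be applied on the fixed interval, and restriction back to $(0,T)$ gives the $T$-independent bound.

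The main obstacle I expect is the clean bookkeeping of which trace conditions permit extension by zero in which space, and in particular why the single hypothesis $\partial_t^k v(0)=0$ for $k<(r-1)/2$ suffices simultaneously for every admissible $p$ in the target scale. Verifying this requires tracking the half-integer thresholds on both the domain and the target side of the interpolation, which is exactly where the odd-integer exclusion enters. The remainder is a direct application of the Lions--Magenes interpolation machinery.
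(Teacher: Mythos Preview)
The paper does not supply its own proof of this lemma: it is listed in Section~2 among results explicitly ``recall[ed] \ldots\ from \cite{B} and \cite{LM}'' and is used as a black box thereafter. Your sketch is the standard argument one finds in those references---interpolation between the two defining components of $\mathcal{K}^r$ for part~(1), and extension by zero under the vanishing-trace hypothesis to obtain $T$-independence for part~(2)---so there is nothing to compare against beyond noting that your approach matches the cited sources.
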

\bigskip

\begin{lemma}\label{lem2}
Let $\bar{T}>0$ be arbitrary, $B$ a Hilbert space and choose $T\leq\bar{T}$.
\begin{enumerate}
\item 
For $v\in L^2((0,T); B)$, we define $V\in H^1((0,T); B)$ by
$$V(t)=\int_0^t v(\tau)\, d\tau.$$
For $0<s<\frac{1}{2}$ and $0\leq\varepsilon<s$, then the map $v\rightarrow V$ is a bounded operator from $H^s((0,T); B)$ to $H^{s+1-\varepsilon}((0,T);B)$, and

$$\|V\|_{H^{s+1-\varepsilon}((0,T); B)}\leq C_0T^{\varepsilon}\|v\|_{H^s((0,T); B)},$$
where $C_0$ is independent of $T$ for $0< T\leq\bar {T}$.
\medskip

\item For $\frac{1}{2}< s<1$, we impose $v(0)=0$ and $0\leq\varepsilon<s$. Then $v\rightarrow V$ is a bounded operator from $H^s_{(0)}((0,T); B)$ to $H^{s+1-\varepsilon}_{(0)}((0,T); B)$ and 

$$\|V\|_{H^{s+1-\varepsilon}_{(0)}((0,T); B)}\leq C_0T^{\varepsilon}\|v\|_{H^s_{(0)}((0,T); B)},$$
where $C_0$ is independent of $T$ for $0< T\leq\bar{T}$.
\end{enumerate}
\end{lemma}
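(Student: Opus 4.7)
The result is classical (see Beale \cite{B} and Lions--Magenes \cite{LM}); the proof rests on the identity $V'=v$ together with the fact that $V(0)=0$ holds automatically, which reduces everything to Sobolev estimates on $v$, combined with the observation that the shortness of the interval produces a free $T^\varepsilon$ factor in the fractional norms.

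The central computation is on the leading-order Slobodeckij seminorm. Since $V'=v$, one has
\begin{equation*}
[V]_{\dot H^{s+1-\varepsilon}(0,T;B)}^2 = \int_0^T\!\!\int_0^T \frac{\|v(t)-v(t')\|_B^2}{|t-t'|^{1+2(s-\varepsilon)}}\,dt\,dt'.
\end{equation*}
Writing $|t-t'|^{-1-2(s-\varepsilon)}=|t-t'|^{-1-2s}\cdot|t-t'|^{2\varepsilon}$ and using $|t-t'|\leq T$ throughout the square of integration, the factor $|t-t'|^{2\varepsilon}$ is bounded by $T^{2\varepsilon}$, so
\begin{equation*}
[V]_{\dot H^{s+1-\varepsilon}(0,T;B)} \leq (2T)^{\varepsilon}\,[v]_{\dot H^s(0,T;B)}\leq C\,T^\varepsilon\,\|v\|_{H^s(0,T;B)}.
\end{equation*}
This is the decisive $T^\varepsilon$ bound on the leading-order part of the target norm.

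For the lower-order $L^2$-type contributions to $\|V\|_{H^{s+1-\varepsilon}}$, I use that $V(0)=0$ automatically, so Poincaré gives $\|V\|_{L^2(0,T)}\leq T\,\|v\|_{L^2(0,T)}\leq T\,\|v\|_{H^s}$. Since $T\leq\bar T$, the elementary factorisation $T\leq\bar T^{1-\varepsilon}\,T^\varepsilon$ absorbs the excess into a constant $C_0=C_0(\bar T)$. Combined with the Slobodeckij estimate above this closes Part 1, where the range $s+1-\varepsilon<3/2$ guarantees that no higher-order intermediate derivative contributes.

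Part 2 is more delicate because for $s>1/2$ the term $\|V'\|_{L^2}=\|v\|_{L^2}$ enters the $H^{s+1-\varepsilon}$ norm and is not small on its own. The compatibility hypothesis $v(0)=0$ is exactly what allows the upgrade of the trivial bound $\|v\|_{L^2}\leq\|v\|_{H^s}$ to one carrying a $T^\varepsilon$ prefactor: using the embedding $H^s(0,T;B)\hookrightarrow C^{s-1/2}([0,T];B)$ valid for $s>1/2$, one obtains $\|v(t)\|_B=\|v(t)-v(0)\|_B\leq C\,t^{s-1/2}\,\|v\|_{H^s}$, and integrating in $t$ gives $\|v\|_{L^2(0,T;B)}\leq C\,T^s\,\|v\|_{H^s}\leq C\,\bar T^{s-\varepsilon}\,T^\varepsilon\,\|v\|_{H^s}$ since $\varepsilon<s$. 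This is the main technical subtlety of the proof and is precisely what explains the asymmetry between the two parts of the statement: in Part 1 no trace of $v$ exists ($s<1/2$), so the intermediate $L^2$-term is handled trivially via $V(0)=0$ alone, whereas in Part 2 a trace exists and must be made to vanish in order to get the sharp $T^\varepsilon$ behaviour.
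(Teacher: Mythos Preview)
The paper does not prove this lemma; it is one of the preliminary results quoted from \cite{B} and \cite{LM} without argument, so there is no proof in the paper to compare against beyond the references you already cite.

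Your computation on the top Slobodeckij seminorm is the right idea, but the sketch has a real gap in Part~1. You bound $[V']_{\dot H^{s-\varepsilon}(0,T)}\le T^\varepsilon[v]_{\dot H^s(0,T)}$ and $\|V\|_{L^2}\le T\|v\|_{L^2}$, then assert that because $s+1-\varepsilon<\tfrac32$ ``no higher-order intermediate derivative contributes.'' But $\varepsilon<s$ forces $s+1-\varepsilon>1$, so the term $\|V'\|_{L^2(0,T)}=\|v\|_{L^2(0,T)}$ \emph{is} part of the $H^{s+1-\varepsilon}$ norm in Part~1 just as in Part~2, and nothing in your argument controls it with a factor $T^\varepsilon$. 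Taking $v\equiv v_0$ constant in time makes the obstruction concrete: the intrinsic Slobodeckij seminorm of $v$ on $(0,T)$ vanishes, so with the norm you are using $\|v\|_{H^s(0,T)}=\|v\|_{L^2(0,T)}$ and the ratio $\|V'\|_{L^2}/\|v\|_{H^s}$ is identically~$1$, with no power of $T$ to spare. Your remark that $\|V'\|_{L^2}$ ``enters'' only for $s>\tfrac12$ therefore misidentifies the distinction between the two parts: what changes is whether $v$ has a trace at $t=0$ (and hence whether zero extension across $t=0$ is admissible), not whether $\|V'\|_{L^2}$ contributes to the target norm.

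The proof in \cite{B} does not compute with the intrinsic Slobodeckij norm on $(0,T)$; obtaining a $T$-independent constant is precisely why one passes through an extension to the full line---by zero across $t=0$ (bounded in $H^s$ for $s<\tfrac12$, and for $\tfrac12<s\le1$ exactly under the hypothesis $v(0)=0$), combined with the $T$-independent extension of Lemma~\ref{extentheorem}---and runs the estimate there. On the extended picture the fact that $v$ is supported on an interval of length comparable to $T$ produces the $T^\varepsilon$ gain uniformly across all pieces of the norm, which a purely local computation on $(0,T)^2$ cannot see. The same extension step is also what would make the embedding constant in your Part~2 H\"older argument independent of $T$; as written, that constant is not obviously uniform either.
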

\bigskip

\begin{lemma}\label{lem3}
Suppose $r>1$ and $r\geq s\geq 0.$ If $v\in H^r(\Omega)$ and $w\in H^s(\Omega)$, then $vw\in H^s(\Omega)$ and
$$\|vw\|_{H^s}\leq C\|v\|_{H^r} \|w\|_{H^s}.$$
\end{lemma}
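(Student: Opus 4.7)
The plan is to reduce to the whole plane and then carry out a Littlewood--Paley/Bony paraproduct decomposition, exploiting the strict Sobolev embedding $H^r(\R^2)\hookrightarrow L^\infty(\R^2)$ that is available precisely because the hypothesis $r>1$ matches the critical exponent $d/2=1$ in dimension two. First, I would apply a standard bounded extension operator $E\colon H^\sigma(\Omega)\to H^\sigma(\R^2)$, valid for all $0\leq\sigma\leq r$, so that it suffices to prove the claim on $\R^2$. The base case $s=0$ is then immediate: by Sobolev embedding $\|v\|_{L^\infty}\leq C\|v\|_{H^r}$, whence $\|vw\|_{L^2}\leq\|v\|_{L^\infty}\|w\|_{L^2}\leq C\|v\|_{H^r}\|w\|_{L^2}$.

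For $0<s\leq r$, I would write, with the usual Littlewood--Paley blocks $\Delta_j$ and low-frequency cutoffs $S_j$,
\[
vw \;=\; T_v w + T_w v + R(v,w),
\]
where $T_v w=\sum_j S_{j-N}v\,\Delta_j w$, $T_w v=\sum_j S_{j-N}w\,\Delta_j v$, and $R(v,w)=\sum_{|j-k|\leq N}\Delta_j v\,\Delta_k w$. The low/high term $T_v w$ is easy: each frequency block satisfies $\|\Delta_j(T_v w)\|_{L^2}\leq C\|v\|_{L^\infty}\|\Delta_j w\|_{L^2}$, and summing with weights $2^{2js}$ yields $\|T_v w\|_{H^s}\leq C\|v\|_{L^\infty}\|w\|_{H^s}\leq C\|v\|_{H^r}\|w\|_{H^s}$.

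For $T_w v$ and $R(v,w)$, the key is the Bernstein-type gain on $v$: since $r>1$, one has $\|\Delta_j v\|_{L^\infty}\leq C\,2^{-j(r-1)}\|v\|_{H^r}$ (with $L^2$-summable constant in $j$). In the region $s<1$ one estimates $\|S_{j-N}w\|_{L^\infty}\leq C\,2^{j(1-s)}\|w\|_{H^s}$, so that
\[
\|\Delta_j(T_w v)\|_{L^2}\leq C\,2^{j(1-s)}\|w\|_{H^s}\|\Delta_j v\|_{L^2},
\]
and multiplying by $2^{js}$ and squaring reduces the sum to $\sum_j 2^{2j}\|\Delta_j v\|_{L^2}^2\leq\|v\|_{H^1}^2\leq\|v\|_{H^r}^2$. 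The remainder $R(v,w)$ leads to a kernel of the form $2^{ks}\,2^{j(1-r)}\mathbf{1}_{j\geq k}$ paired against $\|\Delta_j w\|_{L^2}$, which is handled by Schur's test, again using $r-1>0$ for absolute convergence. When $s\geq 1$, $w$ is already in $L^\infty$ by embedding and the roles of $v$ and $w$ can be interchanged, making these two terms simpler.

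The main obstacle I anticipate is the low-regularity regime $0<s<1$, where $w$ need not be bounded and one must carefully allocate derivatives between the two factors in every dyadic block; the margin $r-1>0$ is exactly what buys the summability in both $T_w v$ and $R(v,w)$. Combining the three estimates gives the stated bound $\|vw\|_{H^s}\leq C\|v\|_{H^r}\|w\|_{H^s}$ on $\R^2$, and pulling back through the extension operator delivers the result on the bounded domain $\Omega$.
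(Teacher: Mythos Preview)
The paper does not actually prove this lemma; it is stated in the preliminaries and attributed to Beale and Lions--Magenes, where the argument is by complex interpolation rather than paraproducts. Concretely: the multiplication operator $M_v\colon w\mapsto vw$ is bounded on $L^2$ with norm $\leq\|v\|_{L^\infty}\leq C\|v\|_{H^r}$ (using $r>d/2=1$), and bounded on $H^r$ with norm $\leq C\|v\|_{H^r}$ by the algebra property of $H^r$; interpolating between these endpoints gives boundedness on $H^s$ for every $0\leq s\leq r$ with the same constant. This is shorter and avoids the dyadic bookkeeping entirely.

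Your paraproduct argument is essentially correct, but there is a genuine slip in the case split. You write that for $s\geq 1$ one has $w\in L^\infty$ by Sobolev embedding; this fails at the endpoint $s=1$, since $H^1(\R^2)\not\hookrightarrow L^\infty(\R^2)$. Your estimate for $T_w v$ in the regime $s<1$ also degenerates as $s\uparrow 1$ (the bound $\|S_{j}w\|_{L^\infty}\leq C\,2^{j(1-s)}\|w\|_{H^s}$ picks up a logarithmic loss there). The fix is easy: at $s=1$ one has $\|S_j w\|_{L^\infty}\leq C\,j^{1/2}\|w\|_{H^1}$ by Cauchy--Schwarz, and the extra factor $j^{1/2}$ is absorbed by the strict decay $2^{j(1-r)}$ coming from $\|\Delta_j v\|_{L^2}\leq 2^{-jr}\|v\|_{H^r}$ with $r>1$. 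Alternatively, simply run your $s<1$ argument for all $s\leq 1$ using $\|S_j w\|_{L^\infty}\leq C\,2^{j(1-s+\eta)}\|w\|_{H^s}$ for any $0<\eta<r-1$ and let the margin $r-1>0$ swallow $\eta$. Either patch closes the gap; the interpolation proof sidesteps it altogether.
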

\bigskip

\begin{lemma}\label{lem4}
If $v\in H^{\frac{1}{q}}$ and $w\in H^{\frac{1}{p}}$ with $\frac{1}{p}+\frac{1}{q}=1$ and $1<p<\infty$ then 
$$\|vw\|_{L^2}\leq C\|v\|_{H^{\frac{1}{q}}}\|w\|_{H^{\frac{1}{p}}}.$$
\end{lemma}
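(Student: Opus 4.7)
The plan is to combine the two-dimensional fractional Sobolev embedding with Hölder's inequality. Since $1<p<\infty$ and $1/p+1/q=1$, both exponents $1/p$ and $1/q$ lie strictly inside $(0,1)$, which in ambient dimension two is strictly below the critical regularity $d/2=1$. Consequently neither $H^{1/p}$ nor $H^{1/q}$ embeds into $L^\infty$, but each embeds into a concrete intermediate $L^r$ space with a sharp and explicit exponent, and that is all that the estimate requires.

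First I would invoke the embeddings $H^{1/q}(\Omega)\hookrightarrow L^{2q/(q-1)}(\Omega)$ and $H^{1/p}(\Omega)\hookrightarrow L^{2p/(p-1)}(\Omega)$. These follow from the standard fractional Sobolev inequality $\|u\|_{L^{2/(1-s)}}\leq C\|u\|_{H^s}$, valid for $0<s<1$ on a bounded Lipschitz domain $\Omega\subset\R^2$ by extending with a Stein extension operator and applying the Bessel-potential form of the Hardy--Littlewood--Sobolev inequality on $\R^2$. Using the conjugacy relation $1/p+1/q=1$ one simplifies $2q/(q-1)=2p$ and symmetrically $2p/(p-1)=2q$, so the two embeddings read $\|v\|_{L^{2p}}\leq C\|v\|_{H^{1/q}}$ and $\|w\|_{L^{2q}}\leq C\|w\|_{H^{1/p}}$.

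Once $v\in L^{2p}(\Omega)$ and $w\in L^{2q}(\Omega)$, Hölder's inequality with $\tfrac{1}{2p}+\tfrac{1}{2q}=\tfrac{1}{2}$ gives $\|vw\|_{L^2}\leq\|v\|_{L^{2p}}\|w\|_{L^{2q}}$. Chaining this with the two Sobolev bounds produces the announced estimate $\|vw\|_{L^2}\leq C\|v\|_{H^{1/q}}\|w\|_{H^{1/p}}$, where $C=C(p,\Omega)$.

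The only delicate point is bookkeeping on the exponents: one must verify that the Sobolev exponents assigned by regularity $1/q$ and $1/p$ are precisely Hölder-conjugate around $L^2$, which comes down to the algebraic identity $\bigl(\tfrac12-\tfrac{1}{2q}\bigr)+\bigl(\tfrac12-\tfrac{1}{2p}\bigr)=1-\tfrac12\bigl(\tfrac1p+\tfrac1q\bigr)=\tfrac12$. The constant $C$ degenerates as $p\to 1^+$ or $p\to\infty$, reflecting the failure of $H^1\hookrightarrow L^\infty$ in two dimensions, but this is harmless because the lemma is applied at a fixed intermediate $p$ in the open range $(1,\infty)$; no uniformity in $p$ is needed for its use in the subsequent product and trace estimates.
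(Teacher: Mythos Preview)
Your argument is correct and is exactly the standard route: in ambient dimension two the fractional Sobolev embedding $H^{s}\hookrightarrow L^{2/(1-s)}$ for $0<s<1$ sends $H^{1/q}$ to $L^{2p}$ and $H^{1/p}$ to $L^{2q}$, and H\"older with $\tfrac{1}{2p}+\tfrac{1}{2q}=\tfrac{1}{2}$ closes the estimate. The paper does not supply its own proof of this lemma; it is listed in the Preliminaries as one of several embedding and interpolation facts recalled from \cite{B} and \cite{LM}, so there is nothing to compare against beyond noting that your proof is the expected one.
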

\bigskip

\begin{lemma}\label{lem5}
Suppose  $B, Y, Z$ are Hilbert spaces, and $M:B\times Y\rightarrow Z$ is a bounded, bilinear, multiplication operator. Suppose $w\in H^s((0,T); B)$ and $v\in H^s((0,T); Y)$, where $s>\frac{1}{2}$. If $vw$ is defined by $M(v,w)$, then $vw\in H^s((0,T); Z)$ and the following hold
\begin{enumerate}
\item $$\|vw\|_{H^s((0,T); Z)}\leq C\|v\|_{H^s((0,T); Y)} \|w\|_{H^s((0,T); B)}.$$
\medskip

\item In addition, if $s\leq 2$ and $\partial_t^k v(0)=\partial_t^k w(0)=0$, $0\leq k<s-\frac{1}{2}$ and $s-\frac{1}{2}$ is not an integer, then the constant $C$ in $(1)$ can be chosen independently on $T$. Indeed
$$\|vw\|_{H^s_{(0)}((0,T); Z)}\leq C \|v\|_{H^s_{(0)}((0,T); Y)}\|w\|_{H^s_{(0)}((0,T); B)}.$$

\end{enumerate}
\end{lemma}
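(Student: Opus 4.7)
The plan is to reduce everything to scalar $H^s$-product estimates in one time variable by exploiting the fact that $M:B\times Y\to Z$ is bounded bilinear, so pointwise $\|M(v(t),w(t))\|_Z\le \|M\|\,\|v(t)\|_Y\|w(t)\|_B$, and that $s>1/2$ in one dimension forces $H^s((0,T))\hookrightarrow L^\infty((0,T))$. Concretely, I would control $\|vw\|_{L^2_t Z}$ by putting $v$ in $L^\infty_t Y$ and $w$ in $L^2_t B$, and control the Gagliardo seminorm using the identity
\[
M(v(t),w(t))-M(v(\tau),w(\tau)) = M(v(t)-v(\tau),w(t)) + M(v(\tau),w(t)-w(\tau)),
\]
each term of which is bounded via one $L^\infty$ factor and one Gagliardo difference quotient. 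This yields the product rule
\[
\|vw\|_{H^s_t Z} \le C\bigl(\|v\|_{L^\infty_t Y}\|w\|_{H^s_t B} + \|w\|_{L^\infty_t B}\|v\|_{H^s_t Y}\bigr),
\]
and combined with the embedding $H^s_t\hookrightarrow L^\infty_t$ gives part (1). In (1) the constant a priori depends on $T$ through the embedding constant on a finite interval.

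For part (2), the $T$-independence comes from passing to the whole line. Under the trace assumptions $\partial_t^k v(0)=\partial_t^k w(0)=0$ for all $k<s-1/2$ with $s-1/2\notin\mathbb{Z}$, extension by zero is a bounded operator $H^s((0,T);Y)\to H^s(\mathbb{R};Y)$ with a universal constant, by the classical characterization of $H^s$ functions with vanishing trace (Lions--Magenes); the hypothesis that $s-1/2$ is not an integer is used exactly here. After extending, the product estimate above is applied on $\mathbb{R}$, where both the Gagliardo seminorm computation and the embedding $H^s(\mathbb{R})\hookrightarrow L^\infty(\mathbb{R})$ have $T$-independent constants. Restricting back to $(0,T)$ then gives the desired bound uniformly in $T\in(0,\bar T]$.

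For part (3), only $w$ has vanishing traces, so I would write $v=v_0+\tilde v$, where $v_0$ is a fixed smooth $Y$-valued function on $[0,\bar T]$ interpolating the traces $\partial_t^k v_0(0)=\partial_t^k v(0)$ for $k<s-1/2$, chosen once and for all (e.g.\ by a polynomial or cutoff construction), and $\tilde v=v-v_0$ has vanishing traces. Then $\|v_0\|_{H^s((0,T);Y)}$ and $\|v_0\|_{L^\infty_t Y}$ are bounded by a constant depending only on the numbers $\partial_t^k v(0)$ and on $\bar T$, not on $T\le\bar T$. Applying (2) to $M(\tilde v,w)$ gives the $T$-independent bound $C\|\tilde v\|_{H^s}\|w\|_{H^s}\le C(\|v\|_{H^s}+\|v_0\|_{H^s})\|w\|_{H^s}$, while $M(v_0,w)$ is estimated by the product rule with $v_0$ in $L^\infty_t$ and $H^s_t$, producing a term $C\|w\|_{H^s}$ whose constant depends on the initial traces but not on $T$. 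Collecting the two contributions yields the stated inequality.

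The main technical obstacle is ensuring $T$-independence: the constants in both the Sobolev embedding $H^s((0,T))\hookrightarrow L^\infty((0,T))$ and in the Gagliardo computation degrade as $T\to 0$ unless one passes to $\mathbb{R}$ by a bounded extension. This is precisely why the vanishing-trace hypotheses and the restriction $s-1/2\notin\mathbb{Z}$ enter in (2) and (3); verifying that extension by zero is a bounded operator under these hypotheses, and that the splitting $v=v_0+\tilde v$ in (3) has $\|v_0\|_{H^s}$ controlled uniformly in $T$, are the delicate points, and they are where one relies on standard results from \cite{LM} recalled in Lemmas~\ref{lem1}--\ref{lem2}.
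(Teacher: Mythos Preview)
The paper does not supply its own proof of this lemma: it is listed in Section~2 among the preliminary results that are explicitly ``recall[ed] \dots\ from \cite{B} and \cite{LM}'' (Beale and Lions--Magenes), and no argument is given in the text. So there is nothing in the paper to compare your sketch against beyond the references.

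Your outline is the standard one and is essentially correct. The reduction to a scalar $H^s((0,T))$ algebra estimate via the bilinearity of $M$, together with the telescoping identity for the Gagliardo difference, is exactly how part~(1) is proved in Beale's paper; the $T$-independence in part~(2) via zero-extension to $\mathbb{R}$ under vanishing traces is precisely the content of Lemma~\ref{extentheorem}(2) here; and the splitting $v=v_0+\tilde v$ with $v_0$ a fixed lift of the traces is the standard device for part~(3). One small gap worth flagging: your Gagliardo argument as written covers $\tfrac12<s<1$ directly, but for $1<s\le 2$ you must differentiate once and bound $(vw)'=v'w+vw'$ in $H^{s-1}$. When $s-1\le\tfrac12$ this last space is not an algebra, so the product must be handled by an $L^\infty\times H^{s-1}$ (or $H^{s-1}\times L^\infty$) pairing rather than by iterating the same estimate; this is routine but should be said for completeness.
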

\bigskip

\begin{lemma}\label{lem6}
For $2<s<\frac{5}{2}$, $\varepsilon, \delta$ positive and small enough and $v\in\mathcal{F}^{s+1,\gamma}$ the following estimates hold
\medskip

\begin{enumerate}
\item $\|v\|_{H_{(0)}^{\frac{s+1}{2}}H^{1-\varepsilon}}\leq C\|v\|_{\mathcal{F}^{s+1,\gamma}}$,\\
\item  $\|v\|_{H_{(0)}^{\frac{s+1}{2}+\varepsilon}H^{1+\delta}}\leq C\|v\|_{\mathcal{F}^{s+1,\gamma}}$,\\
\item  $\|v\|_{H_{(0)}^{\frac{s-1}{2}+\varepsilon}H^{2+\delta}}\leq C\|v\|_{\mathcal{F}^{s+1,\gamma}}$,\\
\item  $\|v\|_{H_{(0)}^{\frac{s}{2}-\frac{1}{4}+\varepsilon}H^{2+\delta}}\leq C\|v\|_{\mathcal{F}^{s+1,\gamma}}$,\\
\item  $\|v\|_{H_{(0)}^1H^{s-1}}\leq C\|v\|_{\mathcal{F}^{s+1,\gamma}}$,\\
\item  $\|v\|_{H_{(0)}^{\frac{1}{2}+2\varepsilon}H^{s}}\leq C\|v\|_{\mathcal{F}^{s+1,\gamma}}$.
\end{enumerate}
\end{lemma}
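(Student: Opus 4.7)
\textit{Plan of proof.} The six inequalities all arise from a single scheme: Hilbert-space interpolation between the two factors of $\mathcal{F}^{s+1,\gamma}=L^\infty_{\frac{1}{4},t}H^{s+1}_x\cap H^2_tH^\gamma_x$, composed with Sobolev embedding in the spatial variable. The preparatory step is to upgrade the weighted sup-norm to an $L^2$-norm in time: for any $v\in\mathcal{F}^{s+1,\gamma}$,
\begin{equation*}
\|v\|_{L^2_tH^{s+1}_x}^2=\int_0^T\|v(t)\|_{H^{s+1}}^2\,dt\le\|v\|_{L^\infty_{\frac{1}{4},t}H^{s+1}_x}^2\int_0^T t^{1/2}\,dt\le CT^{3/2}\|v\|_{\mathcal{F}^{s+1,\gamma}}^2,
\end{equation*}
so that $v\in L^2_tH^{s+1}_x\cap H^2_tH^\gamma_x$ with a controlled norm. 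Complex interpolation between these two Hilbert-valued endpoints then produces, for every $\theta\in(0,1)$,
\begin{equation*}
\|v\|_{H^{2\theta}_tH^{(1-\theta)(s+1)+\theta\gamma}_x}\le C\|v\|_{L^2_tH^{s+1}_x}^{1-\theta}\|v\|_{H^2_tH^\gamma_x}^{\theta}\le C\|v\|_{\mathcal{F}^{s+1,\gamma}}.
\end{equation*}

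For each of the six estimates I would choose $\theta$ so that $2\theta$ equals the prescribed time-regularity, then verify that the interpolated spatial regularity $(1-\theta)(s+1)+\theta\gamma$ strictly exceeds the prescribed spatial regularity, absorbing the excess by the elementary embedding $H^\alpha_x\hookrightarrow H^\beta_x$ for $\beta\le\alpha$. Thus $\theta=(s+1)/4$ gives (1) via the computation $(1-\theta)(s+1)+\theta\gamma=(s+1)(3-s+\gamma)/4>1-\varepsilon$ when $s>2$ and $\gamma$ is sufficiently close to $s-1$; $\theta=1/2$ gives (5) via $(s+1+\gamma)/2>s-1$ since $\gamma>s-1-\varepsilon$; and (2), (3), (4), (6) follow by perturbing these choices of $\theta$ by amounts of order $\varepsilon$. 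A short algebraic check shows that the constraint $2<s<5/2$ and the freedom to take $\delta$ and $(s-1)-\gamma$ small leave sufficient slack in every case.

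The delicate point is the final clause: that the constants become $T$-independent when $v(0)=\partial_t v(0)=0$. The reduction above to $L^2_tH^{s+1}_x$ cost a factor $T^{3/4}$, so one must bypass the weighted sup-norm factor. Under the vanishing-trace hypothesis, the idea is to work instead with $\partial_t v\in H^1_tH^\gamma_x$: recover $v$ by integration, $v(t)=\int_0^t\partial_t v(\tau)\,d\tau$, and apply Lemma~\ref{lem2} to obtain $T$-uniform bounds in spaces of the form $H^{2-\varepsilon}_tH^\gamma_x$; then a trace-type argument in the spirit of Lemma~\ref{lem1}, combining this with the $L^\infty_{\frac{1}{4},t}H^{s+1}_x$ control, converts the top spatial bound into a $T$-uniform $L^2_tH^{s+1}_x$-bound. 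Interpolating as before reproduces all six estimates with constants independent of $T$. I expect this $T$-uniform bookkeeping to be the main obstacle, since the anomalous weighted factor $L^\infty_{\frac{1}{4},t}H^{s+1}_x$ in the definition of $\mathcal{F}^{s+1,\gamma}$ is not itself a standard Hilbert space, and the interaction of the two vanishing traces with the $t^{-1/4}$-weight must be tracked carefully to avoid reintroducing $T$-dependent norms.
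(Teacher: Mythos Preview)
The paper does not include a proof of this lemma; it is listed in Section~2 among preliminary embedding and interpolation facts imported from \cite{B}, \cite{LM}, and implicitly \cite{CCFGG2} (where the space $\mathcal{F}^{s+1,\gamma}$ originates). So there is no paper argument to compare against directly, and your proposal must be judged on its own merits.

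Your interpolation scheme is the correct and standard route. Passing from $L^\infty_{1/4,t}H^{s+1}_x$ to $L^2_tH^{s+1}_x$ and then complex-interpolating against $H^2_tH^\gamma_x$ yields $H^{2\theta}_tH^{(1-\theta)(s+1)+\theta\gamma}_x$; your exponent arithmetic for the six items is sound once $\gamma$ is taken close enough to $s-1$ and $\varepsilon,\delta$ are small. (For instance, in item~(3) with $\theta=(s-1)/4$ and $\gamma=s-1$ the spatial index is $(s+3)/2>5/2$, so there is ample room for the perturbation by $\varepsilon$.)

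Your treatment of the $T$-independence clause is more convoluted than necessary, and the detour through $\partial_t v$ and Lemma~\ref{lem2} is not the right mechanism. Observe first that the bound $\|v\|_{L^2_tH^{s+1}}\le T^{3/4}\|v\|_{L^\infty_{1/4,t}H^{s+1}}$ is already uniform for $T\le\bar T$; the only possible $T$-dependence enters through the interpolation constants on the finite interval $(0,T)$. Under $v(0)=\partial_t v(0)=0$ one extends $v$ by zero to $t<0$ (this preserves both norms exactly) and by a Seeley-type reflection past $t=T$; this is precisely Lemma~\ref{extentheorem}(2), and a single such reflection operator is simultaneously bounded, with $T$-independent constant, on $L^2(\R;H^{s+1})$ and on $H^2(\R;H^\gamma)$ since the construction acts only in the time variable. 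One then interpolates on the whole line, where the constants are absolute, and restricts back to $(0,T)$. This replaces the last paragraph of your plan with a two-line argument.
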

\medskip

\begin{remark}
We notice that in the same way as Lemma \ref{lem6} we can deduce the embeddings for the space $\mathcal{F}^{s,\gamma-1}$. For instance, $\mathcal{F}^{s,\gamma-1}\subset H_{(0)}^{\frac{s-1}{2}+\delta}H^{1+\eta}$, $\mathcal{F}^{s,\gamma-1}\subset H_{(0)}^{\frac{s}{2}-\frac{1}{4}+\delta}H^{1+\eta}$, for $\delta, \eta>0$ and small enough.
\end{remark}

\bigskip

\begin{lemma}\label{parabolic-trace}
Let $\Omega$ be a bounded set with a sufficient smooth boundary then the following trace theorems hold
\begin{enumerate}
\item Suppose $\frac{1}{2}< s\leq 5$. The mapping $v\rightarrow\partial_n^j v$ extends to a bounded operator\\ $\mathcal{K}^s([0,T];\Omega)\rightarrow \mathcal{K}^{s-j-\frac{1}{2}}([0,T];\partial\Omega)$, where $j$ is an integer $0\leq j<s-\frac{1}{2}$. The mapping $v\rightarrow \partial_t^k v(\alpha,0)$ extends to a bounded operator 
$\mathcal{K}^s([0,T];\Omega)\rightarrow H^{s-2k-1}(\Omega)$, if $k$ is an integer $ 0\leq k<\frac{1}{2}(s-1)$.

\item  Suppose $\frac{3}{2}<s<5$, $s\neq 3$ and $s-\frac{1}{2}$ not an integer. Let
$$\mathcal{W}^s=\prod_{0\leq j\leq s-\frac{1}{2}}\mathcal{K}^{s-j-\frac{1}{2}}([0,T];\partial\Omega)\times \prod_{0\leq k< \frac{s-1}{2}}H^{s-2k-1}(\Omega),$$
and let $\mathcal{W}^s_0$ the subspace consisting of $\{a_j,w_k\}$, which are the traces described in the previous point, so that $ \partial_t^k a_j(\alpha,0)=\partial_n^j w_k(\alpha)$, $\alpha\in\partial\Omega$, for $j+2k<s-\frac{3}{2}$. Then the traces in the previous point form a bounded operator $\mathcal{K}^s([0,T];\Omega)\rightarrow \mathcal{W}^s_0$ and this operator has a bounded right inverse.
\end{enumerate}
\end{lemma}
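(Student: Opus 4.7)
The plan is to reduce both claims to the model half-space $\R^2_+\times(0,T)$. I would fix a finite open cover of $\Omega_0$ such that every chart touching $\partial\Omega_0$ admits a smooth time-independent diffeomorphism flattening the boundary onto $\{x_2=0\}$, together with a subordinate partition of unity. Because the diffeomorphisms are smooth and time-independent, composition with them preserves all the anisotropic norms $\mathcal{K}^s$, $\mathcal{K}^{s-j-\frac{1}{2}}$ and $H^{s-2k-1}$ up to equivalent constants; so it suffices to prove both statements in the half-space (boundary charts) and in full space (interior charts, where only the initial-time trace survives).

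For part (1), in the half-space I would use the Fourier characterization
\[
\|v\|_{\mathcal{K}^s}^2 \;\sim\; \int (1+|\xi|^2+|\tau|)^{s}\,|\widehat{v}(\xi,\tau)|^2\, d\xi\, d\tau,
\]
with the parabolic weight $(1+|\xi|^2+|\tau|)$. The normal trace $\partial_{x_2}^j v|_{x_2=0}$ is obtained by integrating $(ik_2)^j\widehat{v}(\xi',k_2,\tau)$ in $k_2$, and Cauchy--Schwarz against the weight yields control with exponent $s-j-\frac{1}{2}$, that is $\mathcal{K}^{s-j-\frac12}$ on $\partial\Omega_0\times(0,T)$, exactly when $j<s-\frac12$. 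Symmetrically, the initial trace $\partial_t^k v|_{t=0}$ is recovered by integrating $(i\tau)^k\widehat v$ in $\tau$; the same Cauchy--Schwarz, combined with the parabolic rescaling $|\tau|\sim|\xi|^2$, converts the $2k+1$ temporal orders lost into $s-2k-1$ spatial orders, giving $H^{s-2k-1}(\Omega_0)$ whenever $k<\frac{s-1}{2}$.

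For part (2), given compatible data $\{a_j,w_k\}\in\mathcal{W}^s_0$, I would build a right inverse in two stages. First construct $v_1\in\mathcal{K}^s$ realising the initial traces by the Taylor-type formula $v_1(t)=\sum_k\frac{t^k}{k!}\eta(t)\widetilde w_k$, with $\eta$ a time cutoff and $\widetilde w_k$ a spatial extension of $w_k$. Then subtract its normal traces from $a_j$ and extend the residual boundary data into the half-space by a parabolic Poisson-type multiplier of the form $e^{-x_2\sqrt{|\xi'|^2+i\tau}}$, corrected so that the lower-order normal traces vanish. The compatibility conditions $\partial_t^k a_j(\alpha,0)=\partial_n^j w_k(\alpha)$ for $j+2k<s-\frac32$ are precisely what force the residual boundary data to vanish to sufficient order at the parabolic corner $\{t=0\}\cap\{x_2=0\}$, so that the Fourier extension lies in $\mathcal{K}^s$ without acquiring corner singularities. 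Boundedness of each step is a Plancherel computation against the same anisotropic weight used in part (1).

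The main obstacle, and the reason for the exclusions $s\neq 3$ and $s-\frac12\notin\Z$, is precisely this analysis at the parabolic corner: at those borderline values an $L^2$-based half-integer trace fails to be well defined, so the set $\mathcal{W}^s_0$ cannot be characterised by a finite list of pointwise identities and the Fourier extension cannot be controlled in $\mathcal{K}^s$. Away from these critical values the proof is a direct anisotropic adaptation of classical trace theory, essentially Proposition~4.5 of Beale \cite{B} together with Chapter~4 of Lions--Magenes \cite{LM}; accordingly I would present the argument as an explicit reduction to those references once the chart/partition of unity setup has been written out.
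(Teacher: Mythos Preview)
Your sketch is sound and aligns with the standard approach; note, however, that the paper does not actually prove this lemma but merely recalls it from the literature, citing Beale \cite{B} and Lions--Magenes \cite{LM} at the start of Section~2. Since your proposal already identifies the result as essentially Proposition~4.5 of \cite{B} together with Chapter~4 of \cite{LM}, you are in agreement with the paper's treatment.
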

\bigskip

\section{Local existence of smooth solutions for the system \eqref{CLS}} \label{sec4}

\noindent In order to prove a local existence result for the system \eqref{CLS}, we use the Picard iterations and we show their contractivity. We separate the iteration for $\tilde{v}$ from the iteration for $\tilde{G}$. Thus, for the velocity we have

%\noindent The main theorem of this section is the following local existence theorem.
%\begin{theorem}\label{localex}
%Let $2<s<\frac{5}{2}$, $1<\gamma<s-1$ . If $\tilde{v}(0)=\tilde{v}_0$ and $\tilde{G}(0)=\tilde{G}_0$ are regular enough, then there exist a sufficiently small $T$ and a solution $\{\tilde{X}(t)-\alpha-tJ^p v_0, \tilde{v}(t), \tilde{q}(t), \tilde{G}(t)\}\in\mathcal{F}^{s+1,\gamma}\times\mathcal{K}^{s+1}\times\mathcal{K}^s_{pr}\times\mathcal{F}^{s,\gamma-1}$ on $t\in [0,T)$.
%\end{theorem}
%
%\noindent  In order to prove Theorem \ref{localex}, we use the Picard 

\begin{equation}\label{v-iterativesys}
\left\{ \begin{array}{lll}
\displaystyle \partial_t\tilde{v}^{(n+1)}-Q^2\Delta\tilde{v}^{(n+1)}+(J^P)^T\nabla\tilde{q}^{(n+1)}=\tilde{f}^{(n)}\\[3mm]

\displaystyle\trace(\nabla\tilde{v}^{(n+1)}J^P)=\tilde{g}^{(n)}\\[3mm]

\displaystyle [-\tilde{q}^{(n+1)}\mathcal{I}+((\nabla\tilde{v}^{(n+1)}J^P)+(\nabla\tilde{v}^{(n+1)}J^P)^T)](J^P)^{-1}\tilde{n}_0=\tilde{h}^{(n)}\\[3mm]
\displaystyle\tilde{v}(0,\tilde{\alpha})=\tilde{v}_0(\tilde{\alpha}),
\end{array}\right.
\end{equation}
\bigskip

\noindent where $\tilde{f}^{(n)},\tilde{ g}^{(n)},\tilde{ h}^{(n)}$ collect all the terms at $n^{th}$ time step, namely
\begin{align*} 
&\tilde{f}^{(n)}=-Q^2\Delta\tilde{v}^{(n)}+(J^P)^T\nabla\tilde{q}^{(n)}+ Q^2(\tilde{X}^{(n)})\tilde{\zeta}^{(n)}\nabla(\tilde{\zeta}^{(n)}\nabla\tilde{v}^{(n)})-J^P(\tilde{X}^{(n)})^T\tilde{\zeta}^{(n)}\nabla\tilde{q}^{(n)}\\ 
&\displaystyle\hspace{1.1cm}+J^P(\tilde{X}^{(n)})\tilde{G}^{(n)}\tilde{\zeta}^{(n)}\nabla\tilde{G}^{(n)},\\[3mm]
&\tilde{g}^{(n)}=\trace(\nabla\tilde{v}^{(n)}J^P)-\trace(\nabla\tilde{v}^{(n)}\tilde{\zeta}^{(n)}J^P(\tilde{X}^{(n)})),\\[3mm]
&\tilde{h}^{(n)}=-\tilde{q}^{(n)}(J^P)^{-1}\tilde{n}_0+\tilde{q}^{(n)}(J^P(\tilde{X}^{(n)}))^{-1}\nabla_{\Lambda}\tilde{X}^{(n)}\tilde{n}_0+
((\nabla\tilde{v}^{(n)}J^P)+(\nabla\tilde{v}^{(n)}J^P)^T)(J^P)^{-1}\tilde{n_0}\\
&\hspace{1cm}-((\nabla\tilde{v}^{(n)}\tilde{\zeta}^{(n)}J^P(\tilde{X}^{(n)}))+(\nabla\tilde{v}^{(n)}\tilde{\zeta}^{(n)}J^P(\tilde{X}^{(n)}))^T)(J^P(\tilde{X}^{(n)}))^{-1}\nabla_{\Lambda}\tilde{X}^{(n)}\tilde{n}_0\\
&\hspace{1cm}-(\tilde{G}^{(n)}\tilde{G}^{T(n)}-\mathcal{I})(J^P(\tilde{X}^{(n)}))^{-1}\nabla_{\Lambda}\tilde{X}^{(n)}\tilde{n}_0,
\end{align*}

\bigskip

\noindent while for the deformation gradient,  we consider the following ODE 

\begin{equation}\label{G-iterativesys}
\left\{ \begin{array}{lll}
\displaystyle\partial_t\tilde{G}^{(n+1)}(t,\tilde{\alpha})=J^P(\tilde{X}^{(n)}(t,\tilde{\alpha}))\tilde{\zeta}^{(n)}\nabla\tilde{v}^{(n)}\tilde{G}^{(n)}\\[3mm]
\displaystyle \tilde{G}(0,\tilde{\alpha})=\tilde{G}_0(\tilde{\alpha}).
\end{array}\right.
\end{equation}
\bigskip

\noindent Moreover the flux satisfies

\begin{equation}\label{X-iterativesys}
\left\{\begin{array}{lll}
\displaystyle \frac{d}{dt}\tilde{X}^{(n+1)}(t,\tilde{\alpha})=J^P(\tilde{X}^{(n)}(t,\tilde{\alpha})) \tilde{v}^{(n)}(t,\tilde{\alpha})\\[3mm]
\displaystyle\tilde{X}(0,\tilde{\alpha})=\tilde{\alpha}\hspace{6cm} \textrm{in}\hspace{0.3cm}\tilde{\Omega}_0.
\end{array}\right.
\end{equation}
\medskip

\noindent We study separately the systems \eqref{v-iterativesys}, \eqref{G-iterativesys} and \eqref{X-iterativesys}. For the linear system $(\ref{v-iterativesys})$, we use the methods of \cite{B} adapted to the conformal coordinates as in \cite{CCFGG2}. Consequently we study the following system

\begin{equation}\label{v-homsys}
\left\{\begin{array}{lll}
\displaystyle \partial_t \tilde{v}-Q^2\Delta\tilde{v}+(J^P)^T \nabla\tilde{q}=\tilde{f} \hspace{4.5cm}\textrm{in}\hspace{0.3cm}(0,T)\times\tilde{\Omega}_0\\[2mm]
\displaystyle \trace(\nabla \tilde{v} J^P)=\tilde{g}\hspace{6.8cm}\textrm{in}\hspace{0.3cm}(0,T)\times\tilde{\Omega}_0\\[2mm]
\displaystyle (-\tilde{q}\mathcal{I}+(\nabla\tilde{v} J^P)+(\nabla\tilde{v} J^P)^T)\frac{(J^P)^T}{Q^2} \tilde{n}=\tilde{h}\hspace{2.3cm}\textrm{on}\hspace{0.3cm}(0,T)\times\partial\tilde{\Omega}_0\\[2mm]
\displaystyle \tilde{v}(0,\tilde{\alpha})=\tilde{v}_0(\tilde{\alpha})\hspace{6.7cm}\textrm{on}\hspace{0.3cm}\{t=0\}\times\tilde{\Omega}_0,
\end{array}\right.
\end{equation}
\medskip

\noindent supplemented by the following compatibility conditions for the initial data

\begin{equation}\label{compcond}
\left\{\begin{array}{lll}
\displaystyle \trace(\nabla \tilde{v}_0 J^P)=\tilde{g}(0)\hspace{7.7cm}\textrm{in}\hspace{0.1cm}\tilde{\Omega}_0\\[3mm]
\displaystyle ((J^P)^{-1}\tilde{n})^{\perp}(\nabla\tilde{ v}_0 J^P+(\nabla \tilde{v}_0 J^P)^T)(J^P)^{-1}\tilde{n}=\tilde{h}(0)((J^P)^{-1}\tilde{n})^{\perp}\hspace{0.5cm}\textrm{on}\hspace{0.1cm}\partial\tilde{\Omega}_0
\end{array}\right.
\end{equation}
\medskip

\noindent We define the following functional space of the solution $X_0$, namely
\begin{align*}
&X_0:=\left\lbrace (\tilde{v},\tilde{q})\in\mathcal{K}^{s+1}_{(0)}\times \mathcal{K}^{s}_{pr(0)}\right\rbrace,
\end{align*}
the function space of the data $Y_0$, namely
\begin{align*}
& Y_0:=\{ (\tilde{f},\tilde{g},\tilde{h})\in\mathcal{K}^{s-1}_{(0)}\times \mathcal{\bar{K}}^{s}_{(0)}\times \mathcal{K}^{s-\frac{1}{2}}_{(0)}([0,T];\partial\Omega):\hspace{0.1cm}(\ref{compcond})\hspace{0.1cm} \textrm{are satisfied}\},
\end{align*}
\medskip

\noindent and a linear operator  $L: X_0\rightarrow Y_0$, related to the system (\ref{v-homsys}) by 
\begin{equation}\label{L}
L(\tilde{v},\tilde{q})=(\tilde{f},\tilde{g},\tilde{h},\tilde{v}_0).
\end{equation}
\noindent The well-posedness of the system (\ref{v-homsys}) is guaranteed by the invertibility of the operator $L$, proved in \cite{B} and \cite{CCFGG2}.

\begin{lemma}\label{invL}
The operator $L$ defined in (\ref{L}) is invertible for $2<s<\frac{5}{2}$. Moreover, for any $0<T<\bar{T}$, the bound of $\|L^{-1}\|$ does not depend on $T$. Precisely, the following estimate holds
\begin{equation*}
\|(\tilde{v},\tilde{q})\|_{X_0}\leq C \|(\tilde{f},\tilde{g},\tilde{h}, 0)\|_{Y_0}.
\end{equation*}
\end{lemma}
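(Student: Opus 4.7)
The plan is to reduce the conformal Stokes system (\ref{v-homsys}) to the standard Stokes system on $\Omega_0 = P^{-1}(\tilde{\Omega}_0)$ by composition with the conformal map $P$, and then invoke the invertibility result of Beale \cite{B} (as adapted in \cite{CCFGG2}) for the classical Stokes system on $\Omega_0$. Concretely, given $(\tilde v, \tilde q) \in X_0$ I would set
\begin{equation*}
v(t,X) = \tilde v(t, P(X)), \qquad q(t,X) = \tilde q(t, P(X)),
\end{equation*}
and use that $P$ is conformal (so the Cauchy-Riemann relations give $Q^2 \Delta \tilde v$ pulling back to $\Delta v$) together with the identity $\tilde n = -\Lambda J^P_{|\partial\tilde\Omega}\Lambda n$ recorded in Section 3, to verify that the system (\ref{v-homsys}) on $\tilde\Omega_0$ corresponds to
\begin{equation*}
\partial_t v - \Delta v + \nabla q = f_\sharp, \qquad \dive v = g_\sharp, \qquad (-q\mathcal{I} + \nabla v + \nabla v^T)n = h_\sharp \quad \text{on } \partial\Omega_0,
\end{equation*}
where $(f_\sharp,g_\sharp,h_\sharp,v_0)$ is the pullback of $(\tilde f,\tilde g,\tilde h,\tilde v_0)$, and the compatibility conditions (\ref{compcond}) translate into the ones required by Beale's theorem.

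Next I would verify that the map $(\tilde v,\tilde q)\mapsto (v,q)$ is a bounded isomorphism between the Beale-type spaces on $\tilde\Omega_0$ and on $\Omega_0$, with the analogous statement for the data space $Y_0$. Since $P$ is a smooth conformal diffeomorphism on a neighborhood of $\overline{\tilde\Omega_0}$ (keeping away from the branch line $\Gamma$), the chain rule and the multiplier Lemmas \ref{lem3} and \ref{lem5} show that $\mathcal K^{s+1}$, $\mathcal K^s_{pr}$, $\mathcal K^{s-1}$, $\bar{\mathcal K}^s$ and the boundary spaces in $Y_0$ are all preserved, with norms comparable independently of $T$. Applying the Beale invertibility theorem in $\Omega_0$ one then obtains
\begin{equation*}
\|(v,q)\|_{X_0(\Omega_0)} \leq C \|(f_\sharp,g_\sharp,h_\sharp,v_0)\|_{Y_0(\Omega_0)},
\end{equation*}
and pulling back by $P^{-1}$ yields the stated estimate for $L^{-1}$.

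For the uniformity in $T$ I would rely on the vanishing traces built into the definitions of $X_0$ and $Y_0$: all of $\tilde v(0)$, $\partial_t \tilde v(0)$, $\tilde q(0)$, $\tilde f(0)$, $\tilde g(0)$, $\partial_t \tilde g(0)$, $\tilde h(0)$ are zero. By Lemma \ref{extentheorem} these functions extend by zero across $t=0$ to yield functions on $(-\infty,T)$ whose relevant norms are controlled by their norms on $(0,T)$ with constants independent of $T \in (0,\bar T]$. One can then further extend past $t=T$ (again by Lemma \ref{extentheorem}) to obtain a problem on $\R$ for which the heat/Stokes parabolic estimates from \cite{B} give a $T$-independent bound. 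Restricting to $(0,T)$ recovers the claim.

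The main technical obstacle I expect is the careful verification of how the boundary equation transforms: the factor $(J^P)^T/Q^2$ multiplying $\tilde n$ in (\ref{v-homsys}), combined with the formula for $\tilde n$, must collapse exactly onto the standard Neumann-type Stokes boundary condition for $(v,q)$ on $\partial\Omega_0$, including the correct identification of $h_\sharp$ in the boundary trace space $\mathcal K^{s-\frac12}([0,T];\partial\Omega_0)$. After this algebraic reduction and the verification that the compatibility relations in (\ref{compcond}) are equivalent to those of \cite{B} under the change of variables, the rest is a direct application of Beale's construction together with the trace and extension results of Section 2.
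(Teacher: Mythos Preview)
The paper does not give a proof of this lemma; immediately before the statement it simply records that the invertibility is ``proved in \cite{B} and \cite{CCFGG2}''. Your proposal, however, contains a genuine gap that is worth flagging.

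You want to pull the system back by $P^{-1}$ to $\Omega_0 = P^{-1}(\tilde{\Omega}_0)$ and invoke Beale's theorem there. But $\Omega_0$ is precisely the \emph{splash} domain: its boundary self-intersects at one point, and this is the entire reason the conformal map $P$ was introduced in the first place. Your assertion that $P$ (or rather $P^{-1}$) is a diffeomorphism on a neighborhood of $\overline{\tilde{\Omega}_0}$ is false in the sense you need: although $P^{-1}(\tilde z)=\tilde z^2$ is smooth, the two boundary points $\tilde z_1,\tilde z_2=-\tilde z_1\in\partial\tilde{\Omega}_0$ that came from the splash point collapse to the same point under $P^{-1}$, so the image boundary $\partial\Omega_0$ is not an embedded curve and the classical Stokes well-posedness theory of \cite{B} does not apply on $\Omega_0$. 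The correct route, which is what \cite{CCFGG2} carries out and what the paper is citing, is to stay on the good domain $\tilde{\Omega}_0$ and treat \eqref{v-homsys} as a variable-coefficient linear Stokes problem with smooth coefficients $Q^2$ and $J^P$; Beale's machinery (localization, boundary flattening, half-space model estimates, patching) is then adapted to that setting. Your remarks on obtaining the $T$-independence via the zero initial traces and Lemma~\ref{extentheorem} are, on the other hand, on the right track and are indeed part of that argument.
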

\medskip

\noindent Now, in order to apply the Lemma \ref{invL}, which give us estimates independent of time, we need a modification of the velocity and pressure to have $(\tilde{v},\tilde{q})\in X_0$ and $(\tilde{f},\tilde{g},\tilde{h})\in Y_0$. For that reason we consider an approximation of the velocity

$$\phi=\tilde{v}_0+t(Q^2\Delta\tilde{v}_0-(J^P)^T\nabla\tilde{q}_{\phi}+J^P\tilde{G}_0\nabla\tilde{G}_0)=\tilde{v}_0+t\hat{\phi},$$

\noindent we choose $\tilde{q}_{\phi}$ in such a way that $\partial_t\tilde{v}^{(n)}(0)=\partial_t\phi(0)$, for all $n$. Specifically $\tilde{q}_{\phi}$ has to satisfies

\begin{equation}
\left\{\begin{array}{lll}
-Q^2\Delta\tilde{q}_{\phi}=\trace(\nabla\tilde{v}_0J^P\nabla\tilde{v}_0J^P)-\trace(\nabla(J^P\tilde{G}_0\nabla\tilde{G}_0))\hspace{2.5cm}\textrm{in}\hspace{0.3cm}\tilde{\Omega}_0\\[3mm]
\tilde{q}_{\phi}(J^P)^{-1}\tilde{n}_0=(J^P\nabla\tilde{v}_0+(J^P\nabla\tilde{v}_0)^T+(\tilde{G}_0\tilde{G}_0^T-\mathcal{I}))(J^P)^{-1}\tilde{n}_0\hspace{0.8cm}\textrm{on}\hspace{0.3cm}\partial\tilde{\Omega}_0
\end{array}\right.
\end{equation}
\medskip

\noindent Now, we define the new velocity field
\begin{equation}\label{newvelocity}
\tilde{w}^{(n)}=\tilde{v}^{(n)}-\phi,
\end{equation}

\noindent and the system  $(\ref{v-iterativesys})$ becomes the following

\begin{equation}\label{systemW}
\left\{\begin{array}{lll}
\displaystyle \partial_t\tilde{w}^{(n+1)}-Q^2\Delta\tilde{w}^{(n+1)}+(J^P)^T\nabla\tilde{q}_w^{(n+1)}=\tilde{f}^{(n)}-\partial_t\phi \\[1mm]
\hspace{1cm}+Q^2\Delta\phi-(J^P)^T\nabla\tilde{q}_{\phi}\\ [3mm]
\displaystyle\trace(\nabla\tilde{w}^{(n+1)}J^P)=\tilde{g}^{(n)}-\trace(\nabla\phi J^P)\\ [3mm]
\displaystyle [-\tilde{q}_w^{(n+1)}\mathcal{I}+((\nabla\tilde{w}^{(n+1)}J^P)+(\nabla\tilde{w}^{(n+1)}J^P)^T)](J^P)^{-1}\tilde{n_0}=\\[1mm]
\hspace{1cm}=\tilde{h}^{(n)}+\tilde{q}_{\phi}(J^P)^{-1}\tilde{n}_0-((\nabla\phi J^P)+(\nabla\phi J^P)^T)(J^P)^{-1}\tilde{n}_0\\ [3mm]
\displaystyle \tilde{w}^{(n+1)}_{|t=0}=0,
\end{array}\right.
\end{equation}
\medskip
where

\begin{align*} 
&\tilde{f}^{(n)}=-Q^2\Delta\tilde{w}^{(n)}+(J^P)^T\nabla\tilde{q}_w^{(n)}+ Q^2(\tilde{X}^{(n)})\tilde{\zeta}^{(n)}\nabla(\tilde{\zeta}^{(n)}\nabla\tilde{w}^{(n)})-J^P(\tilde{X}^{(n)})^T\tilde{\zeta}^{(n)}\nabla\tilde{q}_w^{(n)}\\ 
&\displaystyle\hspace{1cm}+J^P(\tilde{X}^{(n)})\tilde{G}^{(n)}\tilde{\zeta}^{(n)}\nabla\tilde{G}^{(n)}-Q^2\Delta\phi+(J^P)^T\nabla\tilde{q}_{\phi}+Q^2(\tilde{X}^{(n)})\tilde{\zeta}^{(n)}\nabla(\tilde{\zeta}^{(n)}\nabla\phi)\\
&\displaystyle\hspace{1cm}-J^P(\tilde{X}^{(n)})\tilde{\zeta}^{(n)}\nabla\tilde{q}_{\phi},\\[3mm]
&\tilde{g}^{(n)}=\trace(\nabla\tilde{w}^{(n)}J^P)-\trace(\nabla\tilde{w}^{(n)}\tilde{\zeta}^{(n)}J^P(\tilde{X}^{(n)}))+\trace(\nabla\phi J^P)-\trace(\nabla\phi\tilde{\zeta}^{(n)}J^P(\tilde{X}^{(n)})),\\[3mm]
&\tilde{h}^{(n)}=-\tilde{q}_w^{(n)}(J^P)^{-1}\tilde{n}_0+\tilde{q}_w^{(n)}(J^P(\tilde{X}^{(n)}))^{-1}\nabla_{\Lambda}\tilde{X}^{(n)}\tilde{n}_0+
((\nabla\tilde{w}^{(n)}J^P)+(\nabla\tilde{w}^{(n)}J^P)^T)(J^P)^{-1}\tilde{n}_0\\
&\hspace{1cm}-((\nabla\tilde{w}^{(n)}\tilde{\zeta}^{(n)}J^P(\tilde{X}^{(n)}))+(\nabla\tilde{w}^{(n)}\tilde{\zeta}^{(n)}J^P(\tilde{X}^{(n)}))^T)(J^P(\tilde{X}^{(n)}))^{-1}\nabla_{\Lambda}\tilde{X}^{(n)}\tilde{n}_0\\
&\hspace{1cm}-(\tilde{G}^{(n)}\tilde{G}^{T(n)}-\mathcal{I})(J^P(\tilde{X}^{(n)}))^{-1}\nabla_{\Lambda}\tilde{X}^{(n)}\tilde{n}_0 -\tilde{q}_{\phi}(J^P)^{-1}\tilde{n}_0+\tilde{q}_{\phi}(J^P(\tilde{X}^{(n)}))^{-1}\nabla_{\Lambda}\tilde{X}^{(n)}\tilde{n}_0\\
&\hspace{1cm}+((\nabla\phi J^P)+(\nabla\phi J^P)^T)(J^P)^{-1}\tilde{n_0}-((\nabla\phi\tilde{\zeta}^{(n)}J^P(\tilde{X}^{(n)}))\\
&\hspace{1cm}+(\nabla\phi\tilde{\zeta}^{(n)}J^P(\tilde{X}^{(n)}))^T)(J^P(\tilde{X}^{(n)}))^{-1}\nabla_{\Lambda}\tilde{X}^{(n)}\tilde{n}_0.
\end{align*}
\medskip

\noindent For the deformation gradient we have 
\begin{equation}\label{newG}
\begin{aligned}
\tilde{G}^{(n+1)}(t,\tilde{\alpha})&=\tilde{G}_0+\int_0^t (J^P(\tilde{X}^{(n)})\tilde{\zeta}^{(n)}\nabla\tilde{w}^{(n)}\tilde{G}^{(n)})(\tau,\tilde{\alpha})\,d\tau \\
&+\int_0^t (J^P(\tilde{X}^{(n)})\tilde{\zeta}^{(n)}\nabla\phi\tilde{G}^{(n)})(\tau,\tilde{\alpha})\,d\tau,
\end{aligned}
\end{equation}
\medskip

\noindent and for the flux we have 
\begin{equation}\label{newX}
\begin{aligned}
\tilde{X}^{(n+1)}(t,\tilde{\alpha})&=\tilde{\alpha}+\int_0^t (J^P(\tilde{X}^{(n)})\tilde{w}^{(n)})(\tau,\tilde{\alpha})\,d\tau+\int_0^t(J^P(\tilde{X}^{(n)})\phi)(\tau,\tilde{\alpha})\,d\tau.
\end{aligned}
\end{equation}
\medskip

\noindent The main theorem of this section is the following local existence theorem.

\begin{theorem}\label{localex}
Let $2<s<\frac{5}{2}$, $1<\gamma<s-1$ . If $\tilde{v}(0)=\tilde{v}_0$ and $\tilde{G}(0)=\tilde{G}_0$ are in $H^k(\tilde{\Omega}_0)$ for $k$ big enough, then there exist a sufficiently small $T$ and a solution $\{\tilde{X}-\hat{X}, \tilde{w}, \tilde{q}_w, \tilde{G}-\hat{G}\}\in\mathcal{F}^{s+1,\gamma}\times\mathcal{K}^{s+1}_{(0)}\times\mathcal{K}^s_{pr(0)}\times\mathcal{F}^{s,\gamma-1}$ on $(0,T]\times\tilde{\Omega}_0$.
\end{theorem}

\noindent In this theorem we introduced $\hat{X}=\tilde{\alpha}+tJ^P\tilde{v}_0$ and $\hat{G}=\tilde{G}_0+tJ^P\nabla\tilde{v}_0\tilde{G}_0$, since by adding these two terms we have $\tilde{X}-\hat{X}$ and $\tilde{G}-\hat{G}$ belong to $\mathcal{F}^{s+1,\gamma}$ and $\mathcal{F}^{s,\gamma-1}$, respectively. In order to prove Theorem \ref{localex}, we have to show the contractivity of the Picard iterations. 

\begin{proposition}(\textbf{Iterative bounds}) \label{fixed point}
For $2<s<\frac{5}{2}$, $1<\gamma<s-1$ and for  $\tilde{X}^{(n)}-\tilde{\alpha}, \tilde{X}^{(n-1)}-\tilde{\alpha}\in B_1$,
$(\tilde{w}^{(n)},\tilde{q}_w^{(n)}), (\tilde{w}^{(n-1)}, \tilde{q}_w^{(n)})\in B_2$ and $\tilde{G}^{(n)}-\tilde{G}_0, \tilde{G}^{(n-1)}-\tilde{G}_0\in B_3$, where $B_1, B_2, B_3$ are balls that we will define later.

\noindent Then it follows

\begin{align*}
&\left\|\tilde{w}^{(n+1)}-\tilde{w}^{(n)}\right\|_{\mathcal{K}^{s+1}_{(0)}}+\left\|\tilde{q}_w^{(n+1)}-\tilde{q}_w^{(n)}\right\|_{\mathcal{K}^{s}_{pr(0)}}+ \left\|\tilde{X}^{(n+1)}-\tilde{X}^{(n)}\right\|_{\mathcal{F}^{s+1,\gamma}}+\left\|\tilde{G}^{(n+1)}-\tilde{G}^{(n)}\right\|_{\mathcal{F}^{s,\gamma-1}}\\
&\hspace{1cm}\leq C(\tilde{v}_0, \tilde{G}_0)T^{\delta}\left( \left\|\tilde{X}^{(n)}-\tilde{X}^{(n-1)}\right\|_{\mathcal{F}^{s+1,\gamma}}+\left\|\tilde{w}^{(n)}-\tilde{w}^{(n-1)}\right\|_{\mathcal{K}^{s+1}_{(0)}}+
\left\|\tilde{q}_w^{(n)}-\tilde{q}_w^{(n-1)}\right\|_{\mathcal{K}^{s}_{pr(0)}}\right.\\
&\hspace{4cm}\left.+ \left\|\tilde{G}^{(n)}-\tilde{G}^{(n-1)}\right\|_{\mathcal{F}^{s,\gamma-1}}\right).
\end{align*}
\end{proposition}
\medskip

\noindent The proof of Proposition \ref{fixed point} is obtained by investigating separately, the equations \eqref{systemW}, \eqref{newG} and \eqref{newX}. The map related to these equations is $\mathcal{L}: X_0\times\mathcal{F}^{s+1,\gamma}\times\mathcal{F}^{s,\gamma-1}\rightarrow X_0\times\mathcal{F}^{s+1,\gamma}\times\mathcal{F}^{s,\gamma-1}$, defined as follows

\begin{align*}
\mathcal{L}\left((\tilde{w}^{(n+1)},\tilde{q}_w^{(n+1)}), \tilde{X}^{(n+1)}-\hat{X}, \tilde{G}^{(n+1)}-\hat{G}\right)=&L^{-1}\left(\tilde{f}^{(n)},\tilde{g}^{(n)},\tilde{h}^{(n)}\right)+\mathcal{D}\left(\tilde{w}^{(n)},\tilde{X}^{(n)}-\hat{X}\right)\\
&+\mathcal{E}\left(\tilde{w}^{(n)},\tilde{X}^{(n)}-\hat{X},\tilde{G}^{(n)}-\hat{G}\right).
\end{align*}

%
%\noindent As already said,  the idea for  proving the existence of a local solution is to apply the contraction mapping principle. Let us introduce the spaces
%\begin{align*}
%& Z_0:=\left\lbrace (\tilde{v},\tilde{X}-\hat{X}): \tilde{v}\in\mathcal{K}^{s+1}_{(0)}, \tilde{X}-\hat{X}\in\mathcal{F}^{s+1,\gamma}\right\rbrace,\\
%&W_0:=\left\lbrace (\tilde{v},\tilde{X}-\hat{X},\tilde{G}-\hat{G}): \tilde{v}\in\mathcal{K}^{s+1}_{(0)}, \tilde{X}-\hat{X}\in\mathcal{F}^{s+1,\gamma}, \tilde{G}-\hat{G}\in\mathcal{F}^{s,\gamma-1}\right\rbrace,
%\end{align*}

\noindent The operators  $\mathcal{D}$ and $\mathcal{E}$ have the following form
\begin{align*}
&\mathcal{D}(\tilde{w}^{(n)},\tilde{X}^{(n)}-\hat{X})=\tilde{X}^{(n+1)}-\hat{X},\\ \\
&\mathcal{E}(\tilde{w}^{(n)},\tilde{X}^{(n)}-\hat{X},\tilde{G}^{(n)}-\hat{G})=\tilde{G}^{(n+1)}-\hat{G},
\end{align*}
\medskip

\noindent Consequently, with the bound of proposition \ref{fixed point} and by applying the contraction mapping principle, we have the following result

\begin{proposition}
For $T$ small enough and a suitable $\delta>0$, $\mathcal{L}$ is a contraction.
\end{proposition}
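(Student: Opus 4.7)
The plan is to deduce the contraction of $\mathcal{L}$ directly from Proposition \ref{fixed point} combined with the contraction-mapping principle: the asserted estimate in Proposition \ref{fixed point} reads precisely
\[
\|\mathcal{L}(w_{n+1}) - \mathcal{L}(w_n)\|_{X_0 \times \mathcal{F}^{s+1,\gamma} \times \mathcal{F}^{s,\gamma-1}} \leq CT^{\delta}\|w_n - w_{n-1}\|,
\]
where $w_k = (\tilde{v}^{(k)}, \tilde{q}^{(k)}, \tilde{X}^{(k)}, \tilde{G}^{(k)})$. Fixing $T$ small enough that $CT^{\delta} < 1$ then makes $\mathcal{L}$ a strict contraction on the $M$-ball of $X_0 \times \mathcal{F}^{s+1,\gamma} \times \mathcal{F}^{s,\gamma-1}$. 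Thus the real content is the four-part iterative estimate of Proposition \ref{fixed point}, which I would establish by decomposing $\mathcal{L}(w_{n+1}) - \mathcal{L}(w_n)$ according to its three building blocks $L^{-1}$, $\mathcal{D}$, $\mathcal{E}$ and handling each term in turn.

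The $\mathcal{D}$-difference, controlling $\tilde{X}^{(n+1)} - \tilde{X}^{(n)}$ in $\mathcal{F}^{s+1,\gamma}$, is exactly what the borrowed lemma from \cite{CCFGG2} provides, with a built-in $T^\delta$. For the $\mathcal{E}$-difference, I would expand telescopically in the four factors $J^P(\tilde{X}^{(n)})$, $\tilde{\zeta}^{(n)}$, $\nabla \tilde{v}^{(n)}$, $\tilde{G}^{(n)}$, estimate each factor difference using the iterate bounds (1)--(4) of Proposition \ref{fixed point}, apply the product lemmas \ref{lem3}--\ref{lem5} together with the embeddings of Lemma \ref{lem6} to realize the integrand in $\mathcal{F}^{s,\gamma-1}$, and finally gain $T^\delta$ from Lemma \ref{lem2} applied to the time integration, which is legitimate because the integrand vanishes at $t=0$. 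For the linear piece $L^{-1}(\tilde{f}^{(n)} - \tilde{f}^{(n-1)}, \tilde{g}^{(n)} - \tilde{g}^{(n-1)}, \tilde{h}^{(n)} - \tilde{h}^{(n-1)}, 0)$, Lemma \ref{invL} reduces matters to bounding the three right-hand-side differences in the spaces $\mathcal{K}^{s-1}$, $\mathcal{\bar{K}}^s$, $\mathcal{K}^{s-\frac{1}{2}}([0,T];\partial\tilde{\Omega}_0)$. Each of $\tilde{f}^{(n)}, \tilde{g}^{(n)}, \tilde{h}^{(n)}$ is an explicit multilinear polynomial in $(\tilde{X}^{(n)}, \tilde{v}^{(n)}, \tilde{q}^{(n)}, \tilde{G}^{(n)})$ and derivatives thereof, so the consecutive differences telescope into sums where exactly one factor is a difference; the product estimates and embeddings then control each summand by a norm difference appearing on the right-hand side of Proposition \ref{fixed point}.

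The main obstacle I expect is the bookkeeping for the linear piece: extracting a clean $T^\delta$ factor for every multilinear term in $\tilde{f}^{(n)} - \tilde{f}^{(n-1)}$, $\tilde{g}^{(n)} - \tilde{g}^{(n-1)}$, $\tilde{h}^{(n)} - \tilde{h}^{(n-1)}$ while landing in the correct anisotropic norms. Since $\tilde{X}^{(n)}$ lives in the weaker space $\mathcal{F}^{s+1,\gamma}$ rather than $\mathcal{K}^{s+1}$, the scale of embeddings in Lemma \ref{lem6} must be invoked with care, and the boundary contribution $\tilde{h}^{(n)}$ additionally requires the parabolic trace theorem (Lemma \ref{parabolic-trace}). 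One must also verify compatibility at $t=0$: the right-hand-side differences must vanish there so that the time-gain lemmas (Lemma \ref{lem2}, part (2) of Lemma \ref{lem5}, and Lemma \ref{extentheorem}) can be applied. These compatibility conditions are precisely the ones encoded in the definitions of $Y_0$ and the vanishing traces built into $X_0$, $Z_0$, $W_0$, so they hold by construction, and the full estimate of Proposition \ref{fixed point} follows.
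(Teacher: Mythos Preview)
Your overall strategy is correct and matches the paper: the contraction of $\mathcal{L}$ is an immediate consequence of the iterative bound in Proposition \ref{fixed point}, and the latter is proved by decomposing into the $L^{-1}$, $\mathcal{D}$, and $\mathcal{E}$ pieces and telescoping each multilinear expression. The paper proceeds exactly this way (Propositions \ref{Gn} and \ref{estim-(v,q)}).

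There is, however, one genuine gap in your sketch. You write that the compatibility conditions at $t=0$ ``hold by construction'' because of the vanishing traces built into $X_0$, $Z_0$, $W_0$. But the iterates $(\tilde{v}^{(n)},\tilde{q}^{(n)})$ do \emph{not} lie in $X_0$: one has $\tilde{v}^{(n)}(0)=\tilde{v}_0\neq 0$, and there is no a priori reason for $\partial_t\tilde{v}^{(n)}(0)$ to be independent of $n$, nor for $\tilde f^{(n)}(0)$, $\tilde h^{(n)}(0)$, $\partial_t\tilde g^{(n)}(0)$ to vanish. The paper handles this by introducing an explicit auxiliary profile
\[
\phi=\tilde{v}_0+t e^{-t^2}\bigl(Q^2\Delta\tilde{v}_0-(J^P)^T\nabla\tilde{q}_{\phi}+J^P\tilde{G}_0\nabla\tilde{G}_0\bigr),
\]
with $\tilde{q}_\phi$ chosen via an elliptic problem so that $\partial_t\tilde v^{(n)}(0)=\partial_t\phi(0)$ for every $n$, and then works with $\tilde{w}^{(n)}=\tilde{v}^{(n)}-\phi$; a further correction $\bar g^{(n)}$ of $\tilde g^{(n)}$ is needed to force $\partial_t\bar g^{(n)}(0)=0$. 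Only after this shift do the right-hand sides and their differences land in $Y_0$, so that Lemma \ref{invL} applies with constants independent of $T$ and the time-gain lemmas (Lemma \ref{lem2}, Lemma \ref{lem5}(2)--(3)) can be invoked. This subtraction of $\phi$ is the step your proposal is missing; without it the $T^\delta$ factors cannot be extracted uniformly in $T$.
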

\bigskip

\subsection{Proof of Proposition \ref{fixed point}}
In order to get this result we study  separately the three equations \eqref{systemW}, \eqref{newG} and \eqref{newX}. 
We rewrite the RHS of system \eqref{systemW} in the following way to apply lemma \ref{invL}.

\begin{align*}
&L(\tilde{w}^{(n+1)},\tilde{q}_w^{(n+1)})=\left(\tilde{f}^{(n)}-\tilde{f}_{G_0}, \bar{g}^{(n)}, \tilde{h}^{(n)}-\tilde{h}_{G_0}\right)+\left(\tilde{f}_{\phi}^L+\tilde{f}_{G_0},\bar{g}_{\phi}^L,\tilde{h}^L_{\phi}+\tilde{h}_{G_0} \right),
\end{align*}
\noindent where $\tilde{f}_{G_0}=J^P\tilde{G}_0\nabla\tilde{G}_0$ and $\tilde{h}_{G_0}=-(\tilde{G}_0\tilde{G}_0^T-\mathcal{I})(J^P)^{-1}\tilde{n}_0$. In addition, we introduce

\begin{align*}
&\tilde{f}^L_{\phi}=-\partial_t\phi+Q^2\Delta\phi-(J^P)^T\nabla\tilde{q}_{\phi},\\[2mm]
&\bar{g}^{(n)}=\tilde{g}^{(n)}+\trace(\nabla\phi\tilde{\zeta}_{\phi}J^P_{\phi})-\trace(\nabla\phi J^P),\\[2mm]
&\bar{g}^L_{\phi}=\tilde{g}^L_{\phi}-\trace(\nabla\phi\tilde{\zeta}_{\phi}J^P_{\phi})+\trace(\nabla\phi J^P),\\[2mm]
&\tilde{g}^L_{\phi}=-\trace(\nabla\phi J^P),\\[2mm]
&\tilde{h}^L_{\phi}=\tilde{q}_{\phi}(J^P)^{-1}\tilde{n}_0-(\nabla\phi J^P+(\nabla\phi J^P)^T)(J^P)^{-1}\tilde{n}_0,
\end{align*}
\medskip 

\noindent where $\tilde{\zeta}_{\phi}=\mathcal{I}-t\nabla(J^P \tilde{v}_0)$ and $(J^P_{\phi})_{ij}=J^P_{ij}+t\partial_kJ^P_{ij}J^P_{kl}\tilde{v}_{0,l}$. These technical modifications are important because now we have $ \left(\tilde{f}^{(n)}-\tilde{f}_{G_0}, \bar{g}^{(n)}, \tilde{h}^{(n)}-\tilde{h}_{G_0}\right)\in Y_0$ and also\\
$\left(\tilde{f}_{\phi}^L+\tilde{f}_{G_0},\bar{g}_{\phi}^L,\tilde{h}^L_{\phi}+\tilde{h}_{G_0} \right)\in Y_0$ and thus we get the independence of time for all the constants.
\medskip

\noindent We can study separately the equations and we start with \eqref{newX}. For this we can use the result obtained in \cite[Proposition 5.3]{CCFGG2}.

\begin{lemma}
For $2<s<\frac{5}{2}$ and $T>0$ small enough depending on $N, \tilde{v}_0$, we have

\begin{enumerate}
\item Let $\tilde{X}^{(n)}-\hat{X}\in \mathcal{F}^{s+1,\gamma}$, $\tilde{w}^{(n)}\in\mathcal{K}^{s+1}_{(0)}$ and $\tilde{q}_w^{(n)}\in\mathcal{K}^s_{pr(0)}$ and such that

\begin{enumerate}
\item $\tilde{X}^{(n)}-\hat{X}\in\left\lbrace \tilde{X}-\hat{X}\in\mathcal{F}^{s+1,\gamma} :\left\|\tilde{X}-\tilde{\alpha}-\int_0^t J^P\nabla\phi\,d\tau\right\|_{\mathcal{F}^{s+1,\gamma}} \leq N \right\rbrace\equiv B_1,$\\[3mm]
\item $(\tilde{w}^{(n)},\tilde{q}_w^{(n)})\in\left\lbrace (\tilde{w},\tilde{q})\in\mathcal{K}^{s+1}_{(0)}\times\mathcal{K}^s_{pr(0)}:  \right.\\
\left.\hspace{2cm}\left\|(\tilde{w},\tilde{q})-L^{-1}(\tilde{f}_{\phi}^L+\tilde{f}_{G_0},\bar{g}_{\phi}^L,\tilde{h}_{\phi}^L+\tilde{h}_{G_0}))\right\|_{\mathcal{K}^{s+1}_{(0)}\times \mathcal{K}^s_{pr(0)}}\leq N\right\rbrace\equiv B_2.$
\end{enumerate}
\medskip

\noindent Then, $\tilde{X}^{(n+1)}-\hat{X}\in B_1$.
\bigskip

\item Let $\tilde{X}^{(n)}-\tilde{\alpha}, \tilde{X}^{(n-1)}-\tilde{\alpha}\in B_1$ and $(\tilde{w}^{(n)}, \tilde{q}^{(n)}), (\tilde{w}^{(n-1)}, \tilde{q}^{(n-1)})\in B_2.$ Then

\begin{equation*}
\left\|\tilde{X}^{(n+1)}-\tilde{X}^{(n)}\right\|_{\mathcal{F}^{s+1,\gamma}}\leq C(\tilde{v}_0)T^{\delta}\left(\left\|\tilde{w}^{(n)}-\tilde{w}^{(n-1)}\right\|_{\mathcal{K}^{s+1}_{(0)}}+\left\|\tilde{X}^{(n)}-\tilde{X}^{(n-1)}\right\|_{\mathcal{F}^{s+1,\gamma}}\right).
\end{equation*}
\end{enumerate}
\end{lemma}
\bigskip

\noindent One of the new terms we have in this paper is the deformation gradient $\tilde{G}$. We have to prove the iterative bounds for this term and we can show the following result.

\begin{proposition}\label{Gn}
For $2<s<\frac{5}{2}$ and $T>0$ small enough, depending only on $N, \tilde{v}_0, \tilde{G}_0$, we have

\begin{enumerate}
\item Let $\tilde{G}^{(n)}-\hat{G}\in\mathcal{F}^{s,\gamma-1}$, $\tilde{X}^{(n)}-\hat{X}\in\mathcal{F}^{s+1,\gamma}$, and $\tilde{w}^{(n)}\in \mathcal{K}^{s+1}_{(0)}$ and such that
\medskip

\begin{enumerate}
\item $\tilde{X}^{(n)}-\hat{X}\in B_1$,\\[2mm]
\item $(\tilde{w}^{(n)},\tilde{q}_w^{(n)})\in B_2$,\\[2mm]
\item $\displaystyle \tilde{G}^{(n)}-\hat{G}\in\left\lbrace \tilde{G}-\hat{G}\in\mathcal{F}^{s,\gamma-1} : \left\|\tilde{G}-\tilde{G}_0-\int_0^t J^P\nabla\phi \tilde{G}_0\,d\tau\right\|_{\mathcal{F}^{s,\gamma-1}} \leq N\right\rbrace\equiv B_3,$
\end{enumerate}
\medskip

\noindent Then, $\tilde{G}^{(n+1)}-\hat{G}\in B_3.$
\bigskip

\item  Let $\tilde{X}^{(n)}-\tilde{\alpha}, \tilde{X}^{(n-1)}-\tilde{\alpha}\in B_1$, $(\tilde{w}^{(n)}, \tilde{q}^{(n)}), (\tilde{w}^{(n-1)}, \tilde{q}^{(n)}) \in B_2$ and $\tilde{G}^{(n)}-\tilde{G}_0, \tilde{G}^{(n-1)}-\tilde{G}_0\in B_3$. Then, for a suitable $\delta>0$,

\begin{equation}\label{prop-localex-G}
\begin{split}
\left\|\tilde{G}^{(n+1)}-\tilde{G}^{(n)}\right\|_{\mathcal{F}^{s,\gamma-1}}&\leq C(\tilde{v}_0,\tilde{G}_0) T^{\delta}\left( \left\|\tilde{G}^{(n)}-\tilde{G}^{(n-1)}\right\|_{\mathcal{F}^{s,\gamma-1}}+\left\|\tilde{w}^{(n)}-\tilde{w}^{(n-1)}\right\|_{\mathcal{K}^{s+1}_{(0)}}\right.\\[2mm]
&\hspace{5cm}\left.+\left\|\tilde{X}^{(n)}-\tilde{X}^{(n-1)}\right\|_{\mathcal{ F}^{s+1,\gamma}}\right) 
\end{split}
\end{equation}
\end{enumerate}
\end{proposition}

\begin{proof}
\textbf{Part 1.}\\
\begin{align*}
&\left\|\tilde{G}^{(n+1)}-\tilde{G}_0-\int_0^t J^P\nabla\phi \tilde{G}_0\,d\tau\right\|_{\mathcal{ F}^{s,\gamma-1}}\leq \left\|\int_0^t J^P( \tilde{X}^{(n)})\tilde{\zeta}^{(n)}\nabla \tilde{w}^{(n)}\tilde{G}^{(n)}\right\|_{\mathcal{ F}^{s,\gamma-1}}\\[2mm]
&+\left\|\int_0^t J^P(\tilde{X}^{(n)})\tilde{\zeta}^{(n)}\nabla\tilde{v}_0 \tilde{G}^{(n)}- J^P\nabla\tilde{v}_0\tilde{G}_0\right\|_{\mathcal{ F}^{s,\gamma-1}}+\left\|\int_0^t J^P(\tilde{X}^{(n)})\tilde{\zeta}^{(n)}\tau\nabla\hat{\phi}\tilde{G}^{(n)}- J^P\tau\nabla\hat{\phi }\tilde{G}_0\,d\tau\right\|_{\mathcal{ F}^{s,\gamma-1}}\\[2mm]
&=I_1+I_2+I_3.
\end{align*}
\medskip

\noindent Let us start with  the estimate in $L^{\infty}_{\frac{1}{4}}H^s$. In order to use \eqref{flux-estim}, \eqref{defgrad-estim},  Lemma \ref{Jp-est} and Lemma \ref{zeta-est}, we need to split these terms in a right way. For the fist term we have

\begin{align*}
&\left\|\int_0^t J^P( \tilde{X}^{(n)})\tilde{\zeta}^{(n)}\nabla \tilde{w}^{(n)}\tilde{G}^{(n)}\right\|_{L^{\infty}_{\frac{1}{4}}H^s}\leq\sup_{t\in [0,T]} t^{-\frac{1}{4}}\int_0^t \|J^P(\tilde{X}^{(n)})\tilde{\zeta}^{(n)}\nabla \tilde{w}^{(n)} \tilde{G}^{(n)}\|_{H^s}\\[2mm]
&\leq T^{\frac{1}{4}}\|J^P(\tilde{X}^{(n)})\tilde{\zeta}^{(n)}\nabla \tilde{w}^{(n)} (\tilde{G}^{(n)}-\tilde{G}_0)\|_{L^2H^s}+T^{\frac{1}{4}}\|J^P(\tilde{X}^{(n)}) \tilde{\zeta}^{(n)}\nabla \tilde{w}^{(n)} \tilde{G}_0\|_{L^2H^s}=I_{1,1}+I_{1,2}
\end{align*}

\noindent We just show the estimate of $I_{1,1}$, since $I_{1,2}$ can be easily deduced from $I_{1,1}$.

\begin{align*}
I_{1,1}&\leq T^{\frac{1}{4}}\|J^P(\tilde{X}^{(n)})\|_{L^{\infty}H^s}\|\tilde{\zeta}^{(n)}\|_{L^{\infty}H^s}\|\tilde{w}^{(n)}\|_{L^{2}H^{s+1}}\|\tilde{G}^{(n)}-\tilde{G}_0\|_{L^{\infty}H^s}\\[2mm]
&\leq T^{\frac{1}{4}}C(\tilde{v}_0)\|\tilde{w}^{(n)}\|_{\mathcal{K}^{s+1}_{(0)}}\left(T^{\frac{1}{4}}\|\tilde{G}^{(n)}-\hat{G}\|_{\mathcal{F}^{s,\gamma-1}}+T\|\tilde{v}_0\|_{H^{s+1}}\|\tilde{G}_0\|_{H^s}\right)\\[2mm]
&\leq T^{\frac{1}{2}}C(\tilde{v}_0,\tilde{G}_0).\\[3mm]
\end{align*}
\medskip

\noindent For the second term we have 
\begin{equation*}
\begin{aligned}
&\left\|\int_0^t J^P(\tilde{X}^{(n)})\tilde{\zeta}^{(n)}\nabla\tilde{v}_0 \tilde{G}^{(n)}- J^P\nabla\tilde{v}_0\tilde{G}_0\right\|_{L^{\infty}_{\frac{1}{4}}H^s}\leq \sup_{t\in [0,T]} t^{-\frac{1}{4}}\int_0^t \|J^P(\tilde{X}^{(n)})\tilde{\zeta}^{(n)}\nabla\tilde{v}_0 \tilde{G}^{(n)}- J^P\nabla\tilde{v}_0\tilde{G}_0\|_{H^s}\\
&\leq T^{\frac{1}{4}}\|J^P(\tilde{X}^{(n)})\tilde{\zeta}^{(n)}\nabla\tilde{v}_0 \tilde{G}^{(n)}- J^P\nabla\tilde{v}_0\tilde{G}_0\|_{L^2H^s}\leq T^{\frac{1}{4}}\left(\|(J^P(\tilde{X}^{(n)})-J^P)\tilde{\zeta}^{(n)}\nabla\tilde{v}_0 \tilde{G}^{(n)}\|_{L^2H^s}+\right.\\[2mm]
&\hspace{0.5cm}\left.\|J^P(\tilde{\zeta}^{(n)}-\mathcal{I})\nabla\tilde{v}_0 \tilde{G}^{(n)}\|_{L^2H^s}+\|J^P\nabla\tilde{v}_0(\tilde{G}^{(n)}-\tilde{G}_0)\|_{L^2H^s}\right)=I_{2,1}+I_{2,2}+I_{2,3}.
\end{aligned}
\end{equation*}

\noindent We show how to get the result for $I_{2,1}$, since for $I_{2,2}$ and $I_{2,3}$ is the same just by noticing that 
$\tilde{\zeta}^{(n)}-\mathcal{I}=\tilde{\zeta}^{(n)}(\mathcal{I}-(\nabla \tilde{X}^{(n)})^{-1})=\tilde{\zeta}^{(n)}\nabla(\tilde{\alpha}-\tilde{X}^{(n)})$.

\begin{align*}
I_{2,1}&\leq T^{\frac{3}{4}}C(\tilde{v}_0)\|J^P(\tilde{X}^{(n)})-J^P\|_{L^{\infty}H^s}\|\tilde{\zeta}^{(n)}\|_{L^{\infty}H^s}\left(\|\tilde{G}^{(n)}-\tilde{G}_0\|_{L^{\infty}H^s}+\|\tilde{G}_0\|_{H^s}\right)\\[2mm]
&\leq C(\tilde{v}_0)T^{\frac{3}{4}}\left(T^{\frac{1}{4}}\|\tilde{G}^{(n)}-\hat{G}\|_{\mathcal{F}^{s,\gamma-1}}+T\|\tilde{v}_0\|_{H^{s+1}}\|\tilde{G}_0\|_{H^s}+\|\tilde{G}_0\|_{H^s}\right)\leq C(\tilde{v}_0,\tilde{G}_0) T.
\end{align*}
\medskip

\noindent And we have $\sum_{i=2,3}I_{2,i}\leq  C(\tilde{v}_0,\tilde{G}_0) T.$  For the third term $I_3$, we have

\begin{align*}
&\left\|\int_0^t J^P(\tilde{X}^{(n)})\tilde{\zeta}^{(n)}\tau\nabla\hat{\phi}\tilde{G}^{(n)}- J^P\tau\nabla\hat{\phi }\tilde{G}_0\right\|_{L^{\infty}_{\frac{1}{4}}H^s}\leq \sup_{t\in [0,T]} t^{-\frac{1}{4}}\int_0^t\|J^P(\tilde{X}^{(n)})\tilde{\zeta}^{(n)}t\nabla\hat{\phi}\tilde{G}^{(n)}- J^Pt\nabla\hat{\phi }\tilde{G}_0\|_{H^s}\\[2mm]
&\leq T^{\frac{1}{4}}\|J^P(\tilde{X}^{(n)})\tilde{\zeta}^{(n)}t\nabla\hat{\phi}\tilde{G}^{(n)}- J^Pt\nabla\hat{\phi }\tilde{G}_0\|_{L^2H^s}\leq T^{\frac{1}{4}}\left(\|(J^P(\tilde{X}^{(n)})-J^P)\tilde{\zeta}^{(n)}t\nabla\hat{\phi}\tilde{G}^{(n)}\|_{L^2H^s}\right.\\[2mm]
&\left.+\|J^P(\tilde{\zeta}^{(n)}-\mathcal{I})t\nabla\hat{\phi}\tilde{G}^{(n)}\|_{L^2H^s}+\|J^P t\nabla\hat{\phi}(\tilde{G}^{(n)}-\tilde{G}_0\|_{L^2H^s}\right)=I_{3,1}+I_{3,2}+I_{3,3}.
\end{align*}
\noindent As for the other terms it is enough to show the estimate of $I_{3,1}$.

\begin{align*}
I_{3,1}&\leq T^{\frac{1}{4}}\|J^P(\tilde{X}^{(n)})-J^P\|_{L^{\infty}H^s}\|\tilde{\zeta}^{(n)}\|_{L^{\infty}H^s}\|t\nabla\hat{\phi}\|_{L^2H^s}\left(\|\tilde{G}^{(n)}-\tilde{G}_0\|_{L^{\infty}H^s}+\|\tilde{G}_0\|_{H^s}\right)\\[2mm]
&\leq T^{\frac{1}{4}}C(\tilde{v}_0)\|t\|_{L^2}\|\nabla\hat{\phi}\|_{H^s}\left(T^{\frac{1}{4}}\|\tilde{G}^{(n)}-\hat{G}\|_{\mathcal{F}^{s,\gamma-1}}+T\|\tilde{v}_0\|_{H^{s+1}}\|\tilde{G}_0\|_{H^s}+\|\tilde{G}_0\|_{H^s}\right)\\[2mm]
&\leq C(\tilde{v}_0,\tilde{G}_0) T.
\end{align*}
\medskip

\noindent And we have $\sum_{i=2,3}I_{3,i}\leq  C(\tilde{v}_0,\tilde{G}_0) T.$ In this way we obtain the estimates in $L^{\infty}_{\frac{1}{4}}H^s$ and we need to obtain a similar result in $H^2_{(0)}H^{\gamma-1}$.   For the second space, we use Lemma \ref{lem3}, with $1<\gamma< s-1$, Lemma \ref{lem5}, Lemma \ref{lem1} and the flux and deformation gradient estimates \eqref{flux-estim}, \eqref{defgrad-estim} and Lemma \ref{Jp-est}  and Lemma \ref{zeta-est}. We remark that in this case the estimates are more complicated because in order to apply the mentioned lemmas we need all terms to be zero at $t=0$, so it implies to add and subtract the right terms. We start with the first one

\begin{align*}
&\left\|\int_0^t J^P( \tilde{X}^{(n)})\tilde{\zeta}^{(n)}\nabla \tilde{w}^{(n)}\tilde{G}^{(n)}\right\|_{H^2_{(0)}H^{\gamma-1}}\leq \left\|J^P( \tilde{X}^{(n)})\tilde{\zeta}^{(n)}\nabla \tilde{w}^{(n)}\tilde{G}^{(n)}\right\|_{H^1_{(0)}H^{\gamma-1}}=I_1,
\end{align*}

\noindent by using Lemma \ref{lem2} with $\varepsilon=0$. Now we can rewrite this term in a more convenient way

\begin{align*}
I_1&=(J^P( \tilde{X}^{(n)})-J^P)(\tilde{\zeta}^{(n)}-\mathcal{I})\nabla \tilde{w}^{(n)}(\tilde{G}^{(n)}-\tilde{G}_0)+( J^P( \tilde{X}^{(n)})-J^P)(\tilde{\zeta}^{(n)}-\mathcal{I})\nabla \tilde{w}^{(n)}\tilde{G}_0\\
&+ (J^P( \tilde{X}^{(n)})-J^P)\nabla\tilde{w}^{(n)}(\tilde{G}^{(n)}-\tilde{G}_0)+(J^P( \tilde{X}^{(n)})-J^P)\nabla\tilde{w}^{(n)}\tilde{G}_0\\
&+J^P\nabla\tilde{w}^{(n)}(\tilde{G}^{(n)}-\tilde{G}_0)+J^P\nabla\tilde{w}^{(n)}\tilde{G}_0+J^P(\tilde{\zeta}^{(n)}-\mathcal{I})\nabla \tilde{w}^{(n)}(\tilde{G}^{(n)}-\tilde{G}_0)\\
&+J^P(\tilde{\zeta}^{(n)}-\mathcal{I})\nabla \tilde{w}^{(n)}\tilde{G}_0=\sum_{i=1}^8 I_{1,i}.
\end{align*}

\noindent We show the estimate of $I_{1,1}$, the others are similar or easier. 

\begin{align*}
\|I_{1,1}\|_{H^1_{(0)}H^{\gamma-1}}&\leq \|J^P( \tilde{X}^{(n)})-J^P\|_{H^1_{(0)}H^{\gamma}}\|(\tilde{\zeta}^{(n)}-\mathcal{I})\nabla \tilde{w}^{(n)}(\tilde{G}^{(n)}-\tilde{G}_0)\|_{H^1_{(0)}H^{\gamma-1}}\\[2mm]
&\leq C(\tilde{v}_0)\|\tilde{X}^{(n)}-\tilde{\alpha}\|_{H^1_{(0)}H^{\gamma-1}}\|\tilde{\zeta}^{(n)}-\mathcal{I}\|_{H^1_{(0)}H^{\gamma-1}}\|\tilde{w}^{(n)}\|_{H^1_{(0)}H^{\gamma}}\|\tilde{G}^{(n)}-\tilde{G}_0\|_{H^1_{(0)}H^{\gamma-1}}\\[2mm]
&\leq C(\tilde{v}_0,\tilde{G}_0)T^{\delta_1}.
\end{align*}
\medskip

\noindent And  $\sum_{i=2}^{8}\|I_{1,i}\|_{H^1_{(0)}H^{\gamma-1}}\leq  \sum_{i=2}^{8} C(\tilde{v}_0,\tilde{G}_0) T^{\delta_i}$. The second term can be  estimated in the same manner as the previous one.

\begin{align*}
&\left\|\int_0^t J^P( \tilde{X}^{(n)})\tilde{\zeta}^{(n)}\nabla \tilde{v}_0\tilde{G}^{(n)}-J^P\nabla\tilde{v}_0\tilde{G}_0\right\|_{H^2_{(0)}H^{\gamma-1}}\leq \left\|J^P( \tilde{X}^{(n)})\tilde{\zeta}^{(n)}\nabla \tilde{v}_0\tilde{G}^{(n)}-J^P\nabla\tilde{v}_0\tilde{G}_0\right\|_{H^1_{(0)}H^{\gamma-1}}=I_2.
\end{align*}

\noindent In order to use the mentioned Lemmas we need to do some adjustements, thus the term can be splitted as follows

\begin{align*}
I_2&=( J^P( \tilde{X}^{(n)})-J^P)(\tilde{\zeta}^{(n)}-\mathcal{I})\nabla \tilde{v}_0(\tilde{G}^{(n)}-\tilde{G}_0)+( J^P( \tilde{X}^{(n)})-J^P)(\tilde{\zeta}^{(n)}-\mathcal{I})\nabla \tilde{v}_0\tilde{G}_0\\
&+ (J^P( \tilde{X}^{(n)})-J^P)\nabla\tilde{v}_0(\tilde{G}^{(n)}-\tilde{G}_0)+(J^P( \tilde{X}^{(n)})-J^P)\nabla\tilde{v}_0\tilde{G}_0\\
&+J^P(\tilde{\zeta}^{(n)}-\mathcal{I})\nabla\tilde{v}_0(\tilde{G}^{(n)}-\tilde{G}_0)+J^P(\tilde{\zeta}^{(n)}-\mathcal{I})\nabla\tilde{v}_0\tilde{G}_0+J^P\nabla \tilde{v}_0(\tilde{G}^{(n)}-\tilde{G}_0)=\sum_{i=1}^7=I_{2,i}.
\end{align*}

\noindent Also in this case we will show the estimate of $I_{2,1}$.

\begin{align*}
\|I_{2,1}\|_{H^1_{(0)}H^{\gamma-1}}&\leq \|J^P( \tilde{X}^{(n)})-J^P\|_{H^1_{(0)}H^{\gamma}}\|(\tilde{\zeta}^{(n)}-\mathcal{I})\nabla \tilde{v}_0(\tilde{G}^{(n)}-\tilde{G}_0)\|_{H^1_{(0)}H^{\gamma-1}}\\[2mm]
&\leq C(\tilde{v}_0)\|\tilde{X}^{(n)}-\tilde{\alpha}\|_{H^1_{(0)}H^{\gamma-1}}\|\tilde{\zeta}^{(n)}-\mathcal{I}\|_{H^1_{(0)}H^{\gamma-1}}\|\nabla \tilde{v}_0\|_{H^s}\|(\tilde{G}^{(n)}-\tilde{G}_0)\|_{H^1_{(0)}H^{\gamma-1}}\\[2mm]
&\leq C(\tilde{v}_0,\tilde{G}_0) T^{\theta_1}.
\end{align*}
\medskip

\noindent And  $\sum_{i=2}^{7}\|I_{2,i}\|_{H^1_{(0)}H^{\gamma-1}}\leq  \sum_{i=2}^{7} C(\tilde{v}_0,\tilde{G}_0) T^{\theta_i}.$ Finally, to conclude the first part of the proof we estimate the third term, as follows

\begin{align*}
\left\|\int_0^t J^P(\tilde{X}^{(n)})\tilde{\zeta}^{(n)}\tau\nabla\hat{\phi}\tilde{G}^{(n)}- J^P\tau\nabla\hat{\phi }\tilde{G}_0\,d\tau\right\|_{H^2_{(0)}H^{\gamma-1}}&\leq \left\|J^P(\tilde{X}^{(n)})\tilde{\zeta}^{(n)}\tau\nabla\hat{\phi}\tilde{G}^{(n)}- J^P\tau\nabla\hat{\phi }\tilde{G}_0\right\|_{H^1_{(0)}H^{\gamma-1}}\\[1mm]
&=I_3.
\end{align*}
\medskip

\noindent This term as the previous one can be splitted

\begin{align*}
I_3&=( J^P( \tilde{X}^{(n)})-J^P)(\tilde{\zeta}^{(n)}-\mathcal{I})t\nabla \hat{\phi}(\tilde{G}^{(n)}-\tilde{G}_0)+( J^P( \tilde{X}^{(n)})-J^P)(\tilde{\zeta}^{(n)}-\mathcal{I})t\nabla \hat{\phi}\tilde{G}_0\\
&+ (J^P( \tilde{X}^{(n)})-J^P)t\nabla\hat{\phi}(\tilde{G}^{(n)}-\tilde{G}_0)+(J^P( \tilde{X}^{(n)})-J^P)t \hat{\phi}\tilde{G}_0\\
&+J^P(\tilde{\zeta}^{(n)}-\mathcal{I})t\nabla\hat{\phi}(\tilde{G}^{(n)}-\tilde{G}_0)+J^P(\tilde{\zeta}^{(n)}-\mathcal{I})t\nabla\hat{\phi}_0\tilde{G}_0+J^Pt\nabla \hat{\phi}(\tilde{G}^{(n)}-\tilde{G}_0)=\sum_{i=1}^7 I_{3,i}.
\end{align*}

\noindent In order to estimate these terms, we will use the preliminary lemmas and we want to point out the fact that $\hat{\phi}$ does not depend on time but just on the initial data $\tilde{v}_0, \tilde{G}_0$ and $\|t\|_{H^1_{(0)}}\leq T^{\frac{1}{2}}$.

\begin{align*}
&\|I_{3,1}\|_{H^1_{(0)}H^{\gamma-1}}\leq \|J^P( \tilde{X}^{(n)})-J^P\|_{H^1_{(0)}H^{\gamma}}\|(\tilde{\zeta}^{(n)}-\mathcal{I})t\nabla \hat{\phi}(\tilde{G}^{(n)}-\tilde{G}_0)\|_{H^1_{(0)}H^{\gamma-1}}\\[2mm]
&\leq C(\tilde{v}_0)\|\tilde{X}^{(n)}-\tilde{\alpha}\|_{H^1_{(0)}H^{\gamma-1}}\|\tilde{\zeta}^{(n)}-\mathcal{I}\|_{H^1_{(0)}H^{\gamma-1}}\|t\nabla \hat{\phi}\|_{H^1_{(0)}H^{\gamma-1}}\|\tilde{G}^{(n)}-\tilde{G}_0\|_{H^1_{(0)}H^{\gamma-1}}\\[2mm]
&\leq C(\tilde{v}_0,\tilde{G}_0) T^{\rho_1}\|t\|_{H^1_{(0)}}\|\nabla\hat{\phi}\|_{H^{\gamma-1}}
\end{align*}

\noindent For the other estimates we can state $\sum_{i=2}^{7}\|I_{3,i}\|_{H^1_{(0)}H^{\gamma-1}}\leq \sum_{i=2}^{7}  C(\tilde{v}_0,\tilde{G}_0) T^{\rho_i}.$

\noindent We prove the first part of the Proposition  by taking $\delta=\min\{\frac{1}{4}, \delta_i,\theta_j,\rho_j\}$, for $i=1,\ldots, 8$ and $j=1,\ldots,7$.
\medskip

\textbf{Part 2.}\\

\noindent We consider the following difference

\begin{align}\label{Gn-diff}
\tilde{G}^{(n+1)}-\tilde{G}^{(n)}&=
\int_0^t J^P (\tilde{X}^{(n)})\tilde{\zeta}^{(n)}\nabla \tilde{v}^{(n)}\tilde{G}^{(n)}-J^P (\tilde{X}^{(n-1)})\tilde{\zeta}^{(n-1)}\nabla \tilde{v}^{(n-1)}\tilde{G}^{(n-1)}
\end{align}

\noindent Thus we rewrite the norm of (\ref{Gn-diff}) as follows

\begin{align*}
& \left\|\int_0^t J^P (\tilde{X}^{(n)})\tilde{\zeta}^{(n)}\nabla \tilde{v}^{(n)}\tilde{G}^{(n)}-J^P (\tilde{X}^{(n-1)})\tilde{\zeta}^{(n-1)}\nabla \tilde{v}^{(n-1)}\tilde{G}^{(n-1)}\right\|_{\mathcal{F}^{s,\gamma-1}}\\[2mm]
&\leq\left\|\int_0^t J^P (\tilde{X}^{(n)})\tilde{\zeta}^{(n)}\nabla \tilde{v}^{(n)}\tilde{G}^{(n)}-J^P( \tilde{X}^{(n-1)})\tilde{\zeta}^{(n-1)}\nabla \tilde{v}^{(n-1)}\tilde{G}^{(n-1)}\right\|_{L^{\infty}_{\frac{1}{4}}H^s}\\[2mm]
&+\left\|\int_0^t J^P(\tilde{X}^{(n)})\tilde{\zeta}^{(n)}\nabla \tilde{v}^{(n)}\tilde{G}^{(n)}-J^P( \tilde{X}^{(n-1)})\tilde{\zeta}^{(n-1)}\nabla \tilde{v}^{(n-1)}\tilde{G}^{(n-1)}\right\|_{H^2_{(0)}H^{\gamma-1}}.
\end{align*}
\medskip

\noindent We start with the estimate in $L^{\infty}_{\frac{1}{4}}H^{s}$ and we have

\begin{align*}
&\left\|\tilde{G}^{(n+1)}-\tilde{G}^{(n)}\right\|_{L^{\infty}_{\frac{1}{4}}H^s}\\[2mm]
&\hspace{0.3cm}\leq \sup_{0\leq t\leq T}t^{-\frac{1}{4}}\left\|\int_0^t J^P( \tilde{X}^{(n)})\tilde{\zeta}^{(n)}\nabla\tilde{v}^{(n)}\tilde{G}^{(n)}-J^P( \tilde{X}^{(n-1)})\tilde{\zeta}^{(n-1)}\nabla \tilde{v}^{(n-1)}\tilde{G}^{(n-1)}\right\|_{H^s}\\[2mm]
&\hspace{0.3cm}\leq T^{\frac{1}{4}}\left\|J^P(\tilde{X}^{(n)})\tilde{\zeta}^{(n)}\nabla \tilde{v}^{(n)}\tilde{G}^{(n)}-J^P(\tilde{X}^{(n-1)})\tilde{\zeta}^{(n-1)}\nabla \tilde{v}^{(n-1)}\tilde{G}^{(n-1)}\right\|_{L^2H^s}=T^{\frac{1}{4}}\|I_1\|_{L^2H^s}.
\end{align*}

\noindent We split $I_1$ as follows

\begin{align*}
I_1=&(J^P( \tilde{X}^{(n)})-J^P( \tilde{X}^{(n-1)}))\tilde{\zeta}^{(n)}\nabla \tilde{v}^{(n)}\tilde{G}^{(n)}+J^P( \tilde{X}^{(n-1)})(\tilde{\zeta}^{(n)}-\tilde{\zeta}^{(n-1)})\nabla \tilde{v}^{(n)}\tilde{G}^{(n)}\\[2mm]
&+J^P( \tilde{X}^{(n-1)})\tilde{\zeta}^{(n-1)}(\nabla \tilde{v}^{(n)}-\nabla \tilde{v}^{(n-1)})\tilde{G}^{(n)}+J^P( \tilde{X}^{(n-1)})\tilde{\zeta}^{(n-1)}\nabla\tilde{ v}^{(n-1)}(\tilde{G}^{(n)}-\tilde{G}^{(n-1)})=\sum_{i=1}^4 I_{1,i}.
\end{align*}

\noindent We estimate $I_{1,1}$ by taking into account the definition of the velocity $\tilde{v}=\tilde{w}+\tilde{v}_0+t\hat{\phi}$ and the fact that $\|\tilde{v}^{(n)}\|_{L^2H^{s+1}}\leq \|\tilde{w}^{(n)}\|_{L^2H^{s+1}}+\|\tilde{v}_0\|_{L^2H^{s+1}}+\|t\hat{\phi}\|_{L^2H^{s+1}}$. Furthermore we apply estimates \eqref{flux-estim}, \eqref{defgrad-estim} and Lemma \ref{Jp-dif-est}, Lemma \ref{zeta-est}.

\begin{align*}
&T^{\frac{1}{4}}\|I_{1,1}\|_{L^2H^s}\leq T^{\frac{1}{4}}\|J^P( \tilde{X}^{(n)})-J^P(\tilde{X}^{(n-1)})\|_{L^{\infty}H^s}\|\tilde{\zeta}^{(n)}\|_{L^{\infty}H^s}\|\nabla\tilde{v}^{(n)}\|_{L^2H^s}\\
&\hspace{3cm}\cdot\left( \|\tilde{G}^{(n)}-\tilde{G}_0\|_{L^{\infty}H^s}+\|\tilde{G}_0\|_{H^s}\right)\\[2mm]
&\leq C(\tilde{v}_0,\tilde{G}_0) T^{\frac{1}{2}} \| \tilde{X}^{(n)}-\tilde{X}^{(n-1)}\|_{L^{\infty}_{\frac{1}{4}}H^s} \|\tilde{v}^{(n)}\|_{L^2H^{s+1}}\leq C(\tilde{v}_0,\tilde{G}_0) T^{\frac{1}{2}} \| \tilde{X}^{(n)}-\tilde{X}^{(n-1)}\|_{\mathcal{F}^{s+1,\gamma}}.
\end{align*}

\noindent We notice that also for $I_{1,2}$ the estimate can be obtained in the same manner as $I_{1,1}$.

\begin{align*}
&T^{\frac{1}{4}}\|I_{12}\|_{L^2H^s}\leq  C(\tilde{v}_0,\tilde{G}_0) T^{\frac{1}{2}} \| \tilde{X}^{(n)}-\tilde{X}^{(n-1)}\|_{\mathcal{F}^{s+1,\gamma}}.
\end{align*}
\medskip

\noindent We want to focus on $I_{1,3}, I_{1,4}$ which give us the other differences that appear in \eqref{prop-localex-G}.\\
\noindent $I_{1,3}=J^P( \tilde{X}^{(n-1)})\tilde{\zeta}^{(n-1)}(\nabla \tilde{w}^{(n)}-\nabla \tilde{w}^{(n-1)})\tilde{G}^{(n)}$ by the definition of the velocity. Thus its estimate follows from Lemma \ref{Jp-est}, Lemma \ref{zeta-est} and estimates \eqref{flux-estim}, \eqref{defgrad-estim}.

\begin{align*}
&T^{\frac{1}{4}}\|J^P( \tilde{X}^{(n-1)})\tilde{\zeta}^{(n-1)}(\nabla \tilde{w}^{(n)}-\nabla \tilde{w}^{(n-1)})\tilde{G}^{(n)}\|_{L^2H^s}\leq T^{\frac{1}{4}}\|J^P( \tilde{X}^{(n-1)})\|_{L^{\infty}H^s}\|\tilde{\zeta}^{(n-1)}\|_{L^{\infty}H^s}\\[2mm]
&\cdot \|\nabla \tilde{w}^{(n)}-\nabla \tilde{w}^{(n-1)}\|_{L^2H^s} \left(\|\tilde{G}^{(n)}-\tilde{G}_0\|_{L^{\infty}H^s}+\|\tilde{G}_0\|_{H^s}\right)\leq C T^{\frac{1}{4}} \|\tilde{w}^{(n)}- \tilde{w}^{(n-1)}\|_{\mathcal{K}^{s+1}}.
\end{align*}
\medskip

\noindent  For the last term we have

\begin{align*}
&T^{\frac{1}{4}}\|I_{1,4}\|_{L^2H^s}\leq T^{\frac{1}{4}}\|J^P(\tilde{ X}^{(n-1)})\|_{L^{\infty}H^s}\|\tilde{\zeta}^{(n-1)}\|_{L^{\infty}H^s} \|\nabla \tilde{v}^{(n-1)}\|_{L^2H^s} \|\tilde{G}^{(n)}-\tilde{G}^{(n-1)}\|_{L^{\infty}H^s}\\[2mm]
&\hspace{2.1cm}\leq C(\tilde{v}_0) T^{\frac{1}{2}} \|\tilde{G}^{(n)}- \tilde{G}^{(n-1)}\|_{\mathcal{F}^{s,\gamma-1}}.\\
\end{align*}
\medskip

\noindent The estimate of $\tilde{G}^{(n)}-\tilde{G}^{(n-1)}$ in $H^2_{(0)}H^{\gamma-1}$ is more complicated since we will use Lemma \ref{lem5} which requires all terms to be zero at $t=0$ then we need to do some adjustements, as we did for the first part of the proof. First of all we observe that by using Lemma \ref{lem2} with $\varepsilon=0$ and the definition of the velocity we get

\begin{align*}
\left\|\tilde{G}^{(n+1)}-\tilde{G}^{(n)}\right\|_{H^2_{(0)}H^{\gamma-1}}&\leq \left\|J^P(\tilde{X}^{(n)})\tilde{\zeta}^{(n)}\nabla \tilde{v}^{(n)}\tilde{G}^{(n)}-J^P( \tilde{X}^{(n-1)})\tilde{\zeta}^{(n-1)}\nabla \tilde{v}^{(n-1)}\tilde{G}^{(n-1)}\right\|_{H^1_{(0)}H^{\gamma-1}}\\[2mm]
&\leq\left\|J^P(\tilde{X}^{(n)})\tilde{\zeta}^{(n)}\nabla \tilde{w}^{(n)}\tilde{G}^{(n)}-J^P( \tilde{X}^{(n-1)})\tilde{\zeta}^{(n-1)}\nabla \tilde{w}^{(n-1)}\tilde{G}^{(n-1)}\right\|_{H^1_{(0)}H^{\gamma-1}}\\[2mm]
&+\left\|J^P(\tilde{X}^{(n)})\tilde{\zeta}^{(n)}\nabla \tilde{v}_0\tilde{G}^{(n)}-J^P( \tilde{X}^{(n-1)})\tilde{\zeta}^{(n-1)}\nabla \tilde{v}_0\tilde{G}^{(n-1)}\right\|_{H^1_{(0)}H^{\gamma-1}}\\[2mm]
&+\left\|J^P(\tilde{X}^{(n)})\tilde{\zeta}^{(n)}t\nabla \hat{\phi}\tilde{G}^{(n)}-J^P( \tilde{X}^{(n-1)})\tilde{\zeta}^{(n-1)}t\nabla \hat{\phi}\tilde{G}^{(n-1)}\right\|_{H^1_{(0)}H^{\gamma-1}}\\[2mm]
&=\|I_{21}\|_{H^1_{(0)}H^{\gamma-1}}+\|I_{22}\|_{H^1_{(0)}H^{\gamma-1}}+\|I_{23}\|_{H^1_{(0)}H^{\gamma-1}}.
\end{align*}
\medskip

\noindent In order to do estimates in this space we need more modifications to apply the preliminary lemmas. The term $I_{21}$ can be splitted as follows

\begin{align*}
I_{21}&=(J^P( \tilde{X}^{(n)})-J^P( \tilde{X}^{(n-1)}))(\tilde{\zeta}^{(n)}-\mathcal{I})\nabla \tilde{w}^{(n)} (\tilde{G}^{(n)}-\tilde{G}_0)\\[1mm]
&+(J^P( \tilde{X}^{(n)})-J^P( \tilde{X}^{(n-1)}))(\tilde{\zeta}^{(n)}-\mathcal{I})\nabla \tilde{w}^{(n)} \tilde{G}_0\\[1mm]
&+(J^P( \tilde{X}^{(n)})-J^P( \tilde{X}^{(n-1)}))\nabla \tilde{w}^{(n)} (\tilde{G}^{(n)}-\tilde{G}_0)+(J^P( \tilde{X}^{(n)})-J^P( \tilde{X}^{(n-1)}))\nabla \tilde{w}^{(n)}\tilde{G}_0\\[1mm]
&+(J^P( \tilde{X}^{(n-1)})-J^P)(\tilde{\zeta}^{(n)}-\tilde{\zeta}^{(n-1)})\nabla \tilde{w}^{(n)} (\tilde{G}^{(n)}-\tilde{G}_0)+J^P(\tilde{\zeta}^{(n)}-\tilde{\zeta}^{(n-1)})\nabla \tilde{w}^{(n)} (\tilde{G}^{(n)}-\tilde{G}_0)\\[1mm]
&+(J^P( \tilde{X}^{(n-1)})-J^P)(\tilde{\zeta}^{(n)}-\tilde{\zeta}^{(n-1)})\nabla \tilde{w}^{(n)} \tilde{G}_0+J^P(\tilde{\zeta}^{(n)}-\tilde{\zeta}^{(n-1)})\nabla \tilde{w}^{(n)}\tilde{G}_0\\[1mm]
&+(J^P( \tilde{X}^{(n-1)})-J^P)(\tilde{\zeta}^{(n-1)}-\mathcal{I})(\nabla \tilde{w}^{(n)}-\nabla \tilde{w}^{(n-1)})(\tilde{G}^{(n)}-\tilde{G}_0)\\[1mm]
&+(J^P( \tilde{X}^{(n-1)})-J^P)(\tilde{\zeta}^{(n-1)}-\mathcal{I})(\nabla \tilde{w}^{(n)}-\nabla \tilde{w}^{(n-1)})\tilde{G}_0
\end{align*}
\begin{align*}
&+(J^P( \tilde{X}^{(n-1)})-J^P)(\nabla \tilde{w}^{(n)}-\nabla \tilde{w}^{(n-1)})(\tilde{G}^{(n)}-\tilde{G}_0)\\[1mm]
&+(J^P( \tilde{X}^{(n-1)})-J^P)(\nabla \tilde{w}^{(n)}-\nabla \tilde{w}^{(n-1)})\tilde{G}_0+J^P(\nabla \tilde{w}^{(n)}-\nabla \tilde{w}^{(n-1)})(\tilde{G}^{(n)}-\tilde{G}_0)\\[1mm]
&+J^P(\nabla \tilde{w}^{(n)}-\nabla \tilde{w}^{(n-1)})\tilde{G}_0+J^P(\tilde{\zeta}^{(n-1)}-\mathcal{I})(\nabla \tilde{w}^{(n)}-\nabla \tilde{w}^{(n-1)})(\tilde{G}^{(n)}-\tilde{G}_0)\\[1mm]
&+J^P(\tilde{\zeta}^{(n-1)}-\mathcal{I})(\nabla \tilde{w}^{(n)}-\nabla \tilde{w}^{(n-1)})\tilde{G}_0+(J^P( \tilde{X}^{(n-1)})-J^P)(\tilde{\zeta}^{(n-1)}-\mathcal{I})\nabla \tilde{w}^{(n-1)}(\tilde{G}^{(n)}-\tilde{G}^{(n-1)})\\[1mm]
&+(J^P( \tilde{X}^{(n-1)})-J^P)\nabla \tilde{w}^{(n-1)}(\tilde{G}^{(n)}-\tilde{G}^{(n-1)})+J^P\nabla \tilde{w}^{(n-1)}(\tilde{G}^{(n)}-\tilde{G}^{(n-1)})\\[1mm]
&+J^P(\tilde{\zeta}^{(n-1)}-\mathcal{I})\nabla \tilde{w}^{(n-1)}(\tilde{G}^{(n)}-\tilde{G}^{(n-1)})=\sum_{i=1}^{19} I_{21,i}.
\end{align*}
\medskip

\noindent We observe that whenever there is a term that does not depend on time then we can take it out of the norm $H^{1}_{(0)}$, by using the properties of this space and the preliminary lemmas. We will show the estimates of $I_{21,1}$ which is the most complicated term. 

\begin{align*}
\|I_{21,1}\|_{H^1_{(0)}H^{\gamma-1}}&\leq \|J^P( \tilde{X}^{(n)})-J^P( \tilde{X}^{(n-1)})\|_{H^{1}_{(0)}H^{\gamma}}\|(\tilde{\zeta}^{(n)}-\mathcal{I})\nabla \tilde{w}^{(n)} (\tilde{G}^{(n)}-\tilde{G}_0)\|_{H^{1}_{(0)}H^{\gamma-1}}\\[2mm]
&\leq C(\tilde{v}_0)\|\tilde{X}^{(n)}-\tilde{X}^{(n-1)}\|_{H^{1}_{(0)}H^{\gamma}}\|\tilde{\zeta}^{(n)}-\mathcal{I}\|_{H^{1}_{(0)}H^{\gamma-1}}\|\tilde{w}^{(n)} \|_{H^{1}_{(0)}H^{\gamma}}\|\tilde{G}^{(n)}-\tilde{G}_0\|_{H^{1}_{(0)}H^{\gamma-1}}\\[2mm]
&\leq C(\tilde{v}_0,\tilde{G}_0)\left\|\int_0^t \tilde{X}^{(n)}-\tilde{X}^{(n-1)}\right\|_{H^{1+\eta-\delta_1}_{(0)}H^{\gamma}}\leq C(\tilde{v}_0,\tilde{G}_0) T^{\delta_1} \|\tilde{X}^{(n)}-\tilde{X}^{(n-1)}\|_{\mathcal{F}^{s+1,\gamma}},
\end{align*}
\medskip

\noindent where we used lemma \ref{lem3} with $\gamma>1$, lemma  \ref{lem5} several times, lemma \ref{zeta-est} and estimate \eqref{flux-estim}, \eqref{defgrad-estim}. The last estimate follows from lemma \ref{lem1} for $\tilde{w}^{(n)}$ and lemma \ref{lem2} with $\eta>\delta_1$. For the remaining terms 

\begin{equation*}
\begin{split}
&\sum_{i=2}^{8} \|I_{21,i}\|_{H^1_{(0)}H^{\gamma-1}} \leq \sum_{i=2}^{8} C(\tilde{v}_0,\tilde{G}_0) T^{\delta_i} \|\tilde{X}^{(n)}-\tilde{X}^{(n-1)}\|_{\mathcal{F}^{s+1,\gamma}},\\[3mm]
&\sum_{i=9}^{15} \|I_{21,i}\|_{H^1_{(0)}H^{\gamma-1}}\leq \sum_{i=9}^{15} C(\tilde{v}_0,\tilde{G}_0) T^{\delta_i} \|\tilde{w}^{(n)}-\tilde{w}^{(n-1)}\|_{\mathcal{K}^{s+1}_{(0)}},\\[3mm]
&\sum_{i=16}^{19} \|I_{21,i}\|_{H^1_{(0)}H^{\gamma-1}}\leq \sum_{i=16}^{19} C(\tilde{v}_0,\tilde{G}_0)T^{\delta_i} \|\tilde{G}^{(n)}-\tilde{G}^{(n-1)}\|_{\mathcal{F}^{s,\gamma-1}}.
\end{split}
\end{equation*}
\medskip

\noindent We have to estimate $I_{22}$ and $I_{23}$, by dividing them in a similar way as $I_{21}$. We notice that by applying the same lemmas, we get also the same results as before, without the velocity's differences. Thus

\begin{equation*}
\begin{split}
&\|I_{22}\|_{H^1_{(0)}H^{\gamma-1}}+\|I_{23}\| _{H^1_{(0)}H^{\gamma-1}}\leq C(\tilde{v}_0,\tilde{G}_0) T^{\theta}\left(\|\tilde{X}^{(n)}-\tilde{X}^{(n-1)}\|_{\mathcal{F}^{s+1,\gamma}}+ \|\tilde{G}^{(n)}-\tilde{G}^{(n-1)}\|_{\mathcal{F}^{s,\gamma-1}}\right).
\end{split}
\end{equation*}
\medskip

\noindent Thus the proof is done for $\delta=\min\{\frac{1}{4}, \delta_i,\theta\}$, for $i=1,\ldots, 19$.
\end{proof}
\bigskip

\noindent We need to prove  similar estimates also for $(\tilde{w},\tilde{q_w})$.

\begin{proposition}\label{estim-(v,q)}
For $2<s<\frac{5}{2}$ and $T>0$ small enough, depending only on $N, \tilde{v}_0, \tilde{G}_0$, we have
\begin{enumerate}

\item Let $\tilde{X}^{(n)}-\hat{X}\in\mathcal{F}^{s+1,\gamma}$, $\tilde{w}^{(n)}\in\mathcal{K}^{s+1}_{(0)}$, and $\tilde{q}_w^{(n)}\in \mathcal{K}^s_{pr(0)}$  and such that 

\begin{enumerate}
\item $\tilde{X}^{(n)}-\hat{X}\in B_1$,\\

\item  $(\tilde{w}^{(n)},\tilde{q}_w^{(n)})\in B_2$,\\

\item $\tilde{G}^{(n)}-\hat{G}\in B_3$.
\end{enumerate}
\medskip

\noindent Then 
$$(\tilde{w}^{(n+1)},\tilde{q}_w^{(n+1)})\in B_2.$$

\bigskip

\item Let $\tilde{X}^{(n)}-\tilde{\alpha}, \tilde{X}^{(n-1)}-\tilde{\alpha}\in B_1$, $\tilde{w}^{(n)}, \tilde{w}^{(n-1)}\in B_2$ and $\tilde{G}^{(n)}-\tilde{G}_0,\tilde{G}^{(n-1)}-\tilde{G}_0\in B_3$. Then, for a suitable $\varrho>0$

\begin{align*}
&\left\|\tilde{w}^{(n+1)}-\tilde{w}^{(n)}\right\|_{\mathcal{K}^{s+1}_{(0)}}+\left\|\tilde{q}_w^{(n+1)}-\tilde{q}_w^{(n)}\right\|_{\mathcal{K}^{s}_{pr(0)}} \leq C(\tilde{v}_0,\tilde{G}_0)T^{\varrho}\left( \left\|\tilde{X}^{(n)}-\tilde{X}^{(n-1)}\right\|_{\mathcal{F}^{s+1,\gamma}}\right.\\
&\hspace{3cm}\left.+\left\|\tilde{w}^{(n)}-\tilde{w}^{(n-1)}\right\|_{\mathcal{K}^{s+1}_{(0)}}+
\left\|\tilde{q}_w^{(n)}-\tilde{q}_w^{(n-1)}\right\|_{\mathcal{K}^{s}_{pr(0)}} + \left\|\tilde{G}^{(n)}-\tilde{G}^{(n-1)}\right\|_{\mathcal{F}^{s,\gamma-1}}\right) 
\end{align*}
\end{enumerate}
\end{proposition}

\begin{proof}
\textbf{Part 1.}\\

\noindent In this proof we use Lemma \ref{invL}, in particular we have

\begin{align*}
&\|(\tilde{w}^{(n+1)},\tilde{q}_w^{(n+1)})-L^{-1}(\tilde{f}_{\phi}^L+\tilde{f}_{G_0},\bar{g}_{\phi}^L,\tilde{h}_{\phi}^L+\tilde{h}_{G_0})\|_{X_0}\\[2mm]
&\leq C \left\|L^{-1}\left(\tilde{f}^{(n)}-\tilde{f}_{G_0},\bar{g}^{(n)},\tilde{ h}^{(n)}-\tilde{h}_{G_0}\right)\right\|_{X_0}\\[2mm]
&\leq C\left(\|\tilde{f}^{(n)}-\tilde{f}_{G_0}\|_{\mathcal{K}^{s-1}_{(0)}}+\|\bar{g}^{(n)}\|_{\mathcal{\bar{K}}^{s}_{(0)}}+\|\tilde{h}^{(n)}-\tilde{h}_{G_0}\|_{\mathcal{K}^{s-\frac{1}{2}}_{(0)}}\right).
\end{align*}
\noindent  Thus it is sufficient to prove
\begin{align*}
&\|\tilde{f}^{(n)}-\tilde{f}_{G_0}\|_{\mathcal{K}^{s-1}_{(0)}}\leq C\left(\tilde{v}_0,\tilde{G}_0, \|\tilde{w}^{(n)}\|_{\mathcal{K}^{s+1}_{(0)}}, \|\tilde{q}^{(n)}_{w}\|_{\mathcal{K}^{s}_{pr(0)}},\|\tilde{X}^{(n)}-\hat{X}\|_{\mathcal{F}^{s+1,\gamma}},\|\tilde{G}^{(n)}-\hat{G}\|_{\mathcal{F}^{s,\gamma-1}}\right)T^{\delta},\\
&\|\bar{g}^{(n)}\|_{\mathcal{\bar{K}}^{s}_{(0)}}\leq C\left(\tilde{v}_0, \tilde{G}_0, \|\tilde{w}^{(n)}\|_{\mathcal{K}^{s+1}_{(0)}}\right)T^{\theta},\\
&\|\tilde{h}^{(n)}-\tilde{h}_{G_0}\|_{\mathcal{K}^{s-\frac{1}{2}}_{(0)}}\leq C\left(\tilde{v}_0,\tilde{G}_0, \|\tilde{w}^{(n)}\|_{\mathcal{K}^{s+1}_{(0)}}, \|\tilde{q}_w^{(n)}\|_{\mathcal{K}^{s}_{pr(0)}},\|\tilde{X}^{(n)}-\hat{X}\|_{\mathcal{F}^{s+1,\gamma}},\|\tilde{G}^{(n)}-\hat{G}\|_{\mathcal{F}^{s,\gamma-1}}\right)T^{\beta},
\end{align*}
\medskip

\noindent for all $(\tilde{w}^{(n)},\tilde{q}_w^{(n)})\in B_2$. 
\medskip

\noindent\underline{\textbf{Estimate for $\tilde{f}^{(n)}$}}\\

For technical reasons we rewrite $\tilde{f}^{(n)}$

\begin{equation}\label{split-f}
\begin{split}
&\tilde{f}^{(n)}_w=Q^2(\tilde{X}^{(n)})\tilde{\zeta}^{(n)}\partial(\tilde{\zeta}^{(n)}\partial\tilde{w}^{(n)})- Q^2\Delta\tilde{w}^{(n)},\\[2mm]
&\tilde{f}_{\phi}=Q^2(\tilde{X}^{(n)})\tilde{\zeta}^{(n)}\partial(\tilde{\zeta}^{(n)}\partial\phi)- Q^2\Delta\phi,\\[2mm]
&\tilde{f}^{(n)}_q=(J^P)^T\nabla\tilde{q}^{(n)}-J^P(\tilde{X}^{(n)})^T\tilde{\zeta}^{(n)}\nabla\tilde{q}^{(n)},\\[2mm]
&\tilde{f}^{(n)}_G=J^P(\tilde{X}^{(n)})\tilde{G}^{(n)}\tilde{\zeta}^{(n)}\nabla\tilde{G}^{(n)}.
\end{split}
\end{equation}
\medskip

\noindent We notice that $\tilde{f}^{(n)}_w, \tilde{f}_{\phi}, \tilde{f}^{(n)}_q$ have already been studied in \cite[Proposition 5.4]{CCFGG2}, but while $\tilde{f}^{(n)}_w, \tilde{f}^{(n)}_q$ are exactly the same, concerning $\tilde{f}_{\phi}$, we mark that our definition of $\phi$ is different since there is the presence of the initial data for the elastic part. However it does not significantly affect the estimates already obtained by Castro et al. but the constant will depend on $\tilde{G}_0$. For this motivation we show only the estimate of $\tilde{f}^{(n)}_G-\tilde{f}_{G_0}$.

\begin{align*}
\|\tilde{f}^{(n)}_G-\tilde{f}_{G_0}\|_{\mathcal{K}^{s-1}_{(0)}}\leq \|\tilde{f}^{(n)}_G-\tilde{f}_{G_0}\|_{L^2H^{s-1}}+\|\tilde{f}^{(n)}_G-\tilde{f}_{G_0}\|_{H^{\frac{s-1}{2}}_{(0)}L^2}.
\end{align*}
\medskip

\noindent For the estimate in $L^2H^{s-1}$ we use the following splitting

\begin{align*}
\tilde{f}^{(n)}_G-\tilde{f}_{G_0}&=J^P(\tilde{X}^{(n)})(\tilde{G}^{(n)}-\tilde{G}_0)\tilde{\zeta}^{(n)}(\nabla\tilde{G}^{(n)}-\nabla\tilde{G}_0)+J^P(\tilde{X}^{(n)})(\tilde{G}^{(n)}-\tilde{G}_0)\tilde{\zeta}^{(n)}\nabla\tilde{G}_0\\[2mm]
&+J^P(\tilde{X}^{(n)})\tilde{G}_0\tilde{\zeta}^{(n)}(\nabla\tilde{G}^{(n)}-\nabla\tilde{G}_0)+(J^P(\tilde{X}^{(n)})-J^P)\tilde{G}_0\nabla{G}_0\\[2mm]
&+J^P\tilde{G}_0(\tilde{\zeta}^{(n)}-\mathcal{I})\nabla\tilde{G}_0=\sum_{i=1}^5 d_{i}^f.
\end{align*}
\medskip

\noindent Now,  Lemma \ref{Jp-est} and Lemma \ref{zeta-est} with estimates \eqref{flux-estim} and \eqref{defgrad-estim} give the following results

\begin{align*}
&\|d_1^f\|_{L^2H^{s-1}}\leq \|J^P(\tilde{X}^{(n)})\|_{L^{\infty}H^{s-1}}\|\tilde{G}^{(n)}-\tilde{G}_0\|_{L^2H^{s-1}}\|\tilde{\zeta}^{(n)}\|_{L^{\infty}H^{s-1}}\|\nabla\tilde{G}^{(n)}-\nabla\tilde{G}_0\|_{L^{\infty}H^{s-1}}\\
&\hspace{1.9cm}\leq C(\tilde{v}_0,\tilde{G}_0)T^{\frac{1}{4}}\\[4mm]
&\|d_5^f\|_{L^2H^{s-1}}\leq t^{\frac{1}{2}}\|d_5^f\|_{L^{\infty}H^{s-1}}\leq \sup_{t\in[0,T]}t^{\frac{1}{2}}\|J^P\tilde{G}_0(\tilde{\zeta}^{(n)}-\mathcal{I})\nabla\tilde{G}_0\|_{H^{s-1}}\\
&\hspace{1.9cm}\leq T^{\frac{1}{2}} \|\tilde{G}_0\|_{H^{s-1}}\|\nabla\tilde{G}_0\|_{H^{s-1}}\|\tilde{\zeta}^{(n)}-\mathcal{I}\|_{L^{\infty}H^{s-1}}\leq C(\tilde{v}_0,\tilde{G}_0) T^{\frac{3}{4}}
\end{align*}
\medskip

\noindent We shown only the estimates of  $d_1^f$ and $d_5^f$, because the first has all terms depending on time. On the contrary $d_5^f$ has almost all terms independent of time.\\
\medskip

\noindent The estimates in $H^{\frac{s-1}{2}}_{(0)}L^2$ are more delicated and we need to pay attention in adding and subtracting the right terms in order to use the preliminaries lemmas.

\begin{align*}
\tilde{f}^{(n)}_G-\tilde{f}_{G_0}&=(J^P(\tilde{X}^{(n)})-J^P)(\tilde{G}^{(n)}-\tilde{G}_0)(\tilde{\zeta}^{(n)}-\mathcal{I})(\nabla\tilde{G}^{(n)}-\nabla\tilde{G}_0)\\[1mm]
&+J^P(\tilde{G}^{(n)}-\tilde{G}_0)(\tilde{\zeta}^{(n)}-\mathcal{I})(\nabla\tilde{G}^{(n)}-\nabla\tilde{G}_0)\\[1mm]
&+(J^P(\tilde{X}^{(n)})-J^P)\tilde{G}_0(\tilde{\zeta}^{(n)}-\mathcal{I})(\nabla\tilde{G}^{(n)}-\nabla\tilde{G}_0)\\[1mm]
&+J^P\tilde{G}_0(\tilde{\zeta}^{(n)}-\mathcal{I})(\nabla\tilde{G}^{(n)}-\nabla\tilde{G}_0)+(J^P(\tilde{X}^{(n)})-J^P)(\tilde{G}^{(n)}-\tilde{G}_0)(\tilde{\zeta}^{(n)}-\mathcal{I})\nabla\tilde{G}_0\\[1mm]
&+J^P(\tilde{G}^{(n)}-\tilde{G}_0)(\tilde{\zeta}^{(n)}-\mathcal{I})\nabla\tilde{G}_0+(J^P(\tilde{X}^{(n)})-J^P)\tilde{G}_0(\tilde{\zeta}^{(n)}-\mathcal{I})\nabla\tilde{G}_0\\[1mm]
&+J^P\tilde{G}_0(\tilde{\zeta}^{(n)}-\mathcal{I})\nabla\tilde{G}_0+(J^P(\tilde{X}^{(n)})-J^P)(\tilde{G}^{(n)}-\tilde{G}_0)\nabla\tilde{G}_0+J^P(\tilde{G}^{(n)}-\tilde{G}_0)\nabla\tilde{G}_0\\[1mm]
&+(J^P(\tilde{X}^{(n)})-J^P)(\tilde{G}^{(n)}-\tilde{G}_0)(\nabla\tilde{G}^{(n)}-\nabla\tilde{G}_0)+J^P(\tilde{G}^{(n)}-\tilde{G}_0)(\nabla\tilde{G}^{(n)}-\nabla\tilde{G}_0)\\[1mm]
&+(J^P(\tilde{X}^{(n)})-J^P)\tilde{G}_0\nabla\tilde{G}_0+(J^P(\tilde{X}^{(n)})-J^P)\tilde{G}_0(\nabla\tilde{G}^{(n)}-\nabla\tilde{G}_0),\\[1mm]
&+J^P\tilde{G}_0(\nabla\tilde{G}^{(n)}-\nabla\tilde{G}_0)=\sum_{i=1}^{15} d_i^f.
\end{align*}
\medskip

\noindent Since there are a lot of terms, we focus on one with the largest number of terms depending on time as $d_1^f$ and one with the least number of terms depending on time as $d_{10}^f$. We use lemma \ref{lem3}, lemma \ref{lem4}, with $1-\eta'=\frac{1}{p}$ lemma \ref{lem5}, lemma \ref{Jp-est} and lemma \ref{zeta-est}. Moreover, we need estimates \eqref{flux-estim} and \eqref{defgrad-estim} for $d_1^f$. On the contrary for $d_{10}^f$ we use the property of the space $H^{\frac{s-1}{2}}_{(0)}([0,T])$, with lemma \ref{lem4} and \eqref{defgrad-estim}.

\begin{align*}
&\|d_1^f\|_{H^{\frac{s-1}{2}}_{(0)}L^2}\leq \|(J^P(\tilde{X}^{(n)})-J^P)(\tilde{\zeta}^{(n)}-\mathcal{I})\|_{H^{\frac{s-1}{2}}_{(0)}H^{1+\eta}}\|(\tilde{G}^{(n)}-\tilde{G}_0)(\nabla\tilde{G}^{(n)}-\nabla\tilde{G}_0)\|_{H^{\frac{s-1}{2}}_{(0)}L^2}\\[2mm]
&\leq\|J^P(\tilde{X}^{(n)})-J^P\|_{H^{\frac{s-1}{2}}_{(0)}H^{1+\eta}}\|\tilde{\zeta}^{(n)}-\mathcal{I}\|_{H^{\frac{s-1}{2}}_{(0)}H^{1+\eta}}\|\tilde{G}^{(n)}-\tilde{G}_0\|_{H^{\frac{s-1}{2}}_{(0)}H^{1-\eta'}}\|\nabla\tilde{G}^{(n)}-\nabla\tilde{G}_0\|_{H^{\frac{s-1}{2}}_{(0)}H^{\eta'}}\\[2mm]
&\leq C(\tilde{v}_0, \tilde{G}_0) T^{\delta_1},\\[5mm]
&\|d_{10}^f\|_{H^{\frac{s-1}{2}}_{(0)}L^2}\leq \|\tilde{G}^{(n)}-\tilde{G}_0\|_{H^{\frac{s-1}{2}}_{(0)}H^{1-\eta'}}\|\nabla\tilde{G}_0\|_{H^{\eta'}}\leq C(\tilde{G}_0) T^{\delta_{10}}.
\end{align*}
\medskip

\noindent We can state that for $i=1,\ldots,15$, we have $\|d_{i}^f\|_{H^{\frac{s-1}{2}}_{(0)}L^2}\leq C(\tilde{v}_0, \tilde{G}_0) T^{\delta_i}$. In that way we can estimate all the terms and by gathering all together we have

\begin{equation}\label{ball-f}
\|\tilde{f}^{(n)}_w\|_{\mathcal{K}^{s-1}_{(0)}}+\|\tilde{f}^{(n)}_{\phi}\|_{\mathcal{K}^{s-1}_{(0)}}+\|\tilde{f}^{(n)}_q\|_{\mathcal{K}^{s-1}_{(0)}}+\|\tilde{f}^{(n)}_{G}-\tilde{f}_{G_0}\|_{\mathcal{K}^{s-1}_{(0)}}\leq C(\tilde{v}_0,\tilde{G}_0)T^{\delta},
\end{equation}

\noindent where $\delta>0$ is the minimum among all the exponents.
\bigskip

\noindent\underline{\textbf{Estimate for $\bar{g}^{(n)}$}}\\

\noindent We have to estimate this term in $\mathcal{\bar{K}}^s_{(0)}$ and we recall, as we did before, the idea for splitting $\bar{g}^{(n)}$ but for all the computations we recall \cite{CCFGG2}. 

\begin{align*}
\bar{g}^{(n)}=&-\trace\left(\nabla \tilde{w}^{(n)}(\tilde{\zeta}^{(n)}-\mathcal{I}) J^P(\tilde{X}^{(n)})\right)-\trace\left(\nabla\phi(\tilde{\zeta}^{(n)}-\tilde{\zeta}_{\phi})J^P(\tilde{X}^{(n)})\right)\\[2mm]
&+\trace\left(\nabla \tilde{w}^{(n)}(J^P-J^P(\tilde{X}^{(n)}))\right)+\trace\left(\nabla\phi\tilde{\zeta}_{\phi}(J^P_{\phi}-J^P(\tilde{X}^{(n)}))\right).
\end{align*}

\noindent The final estimate is 
\begin{equation}\label{ball-g}
\|\bar{g}^{(n)}\|_{\mathcal{\bar{K}}^s_{(0)}}\leq C(\tilde{v}_0,\tilde{G}_0) T^{\theta}.
\end{equation}
\bigskip

\noindent\underline{\textbf{Estimate for $\tilde{h}^{(n)}$}}\\

We rewrite $\tilde{h}^{(n)}$, as follows

\begin{equation}\label{split-h}
\begin{split}
&\tilde{h}^{(n)}_w=\nabla\tilde{w}^{(n)}\tilde{n}_0-\nabla\tilde{w}^{(n)}\tilde{\zeta}^{(n)}\nabla_{\Lambda}\tilde{X}^{(n)}\tilde{n}_0,\\[2mm]
&\tilde{h}^{(n)}_{w^T}=(\nabla\tilde{w}J^P)^T(J^P)^{-1}\tilde{n}_0-(\nabla\tilde{w}^{(n)}\tilde{\zeta}^{(n)}J^P(\tilde{X}^{(n)}))^T(J^P)^{-1}\nabla_{\Lambda}\tilde{X}^{(n)}\tilde{n}_0,\\[2mm]
&\tilde{h}^{(n)}_{\phi}=\nabla\phi\tilde{n}_0-\nabla\phi\tilde{\zeta}^{(n)}\nabla_{\Lambda}\tilde{X}^{(n)}\tilde{n}_0,\\[2mm]
&\tilde{h}^{(n)}_{\phi^T}=(\nabla\phi J^P)^T(J^P)^{-1}\tilde{n}_0-(\nabla\phi\tilde{\zeta}^{(n)}J^P(\tilde{X}^{(n)}))^T(J^P)^{-1}\nabla_{\Lambda}\tilde{X}^{(n)}\tilde{n}_0,\\[2mm]
&\tilde{h}^{(n)}_{q}=\tilde{q}^{(n)}(J^P(\tilde{X}^{(n)}))^{-1}\nabla_{\Lambda}\tilde{X}^{(n)}\tilde{n}_0-\tilde{q}^{(n)}(J^P)^{-1}\tilde{n}_0,\\[2mm]
&\tilde{h}^{(n)}_G=-(\tilde{G}^{(n)}\tilde{G}^{T(n)}-\mathcal{I})(J^P(\tilde{X}^{(n)}))^{-1}\nabla_{\Lambda}\tilde{X}^{(n)}\tilde{n}_0.
\end{split}
\end{equation}

\noindent For this term we have to estimate separately
$\tilde{h}^{(n)}_v, \tilde{h}^{(n)}_{v^T}, \tilde{h}^{(n)}_q$and $\tilde{h}^{(n)}_{G}-\tilde{h}_{G_0}$. As we show for $\tilde{f}^{(n)}$, the estimates are the same given in \cite{CCFGG2} except for $\tilde{h}^{(n)}_{G}-\tilde{h}_{G_0}$.\\

\begin{align*}
\|\tilde{h}^{(n)}_{G}-\tilde{h}_{G_0}\|_{\mathcal{K}^{s-\frac{1}{2}}_{(0)}}\leq\|\tilde{h}^{(n)}_{G}-\tilde{h}_{G_0}\|_{L^2H^{s-\frac{1}{2}}}+\|\tilde{h}^{(n)}_{G}-\tilde{h}_{G_0}\|_{H^{\frac{s}{2}-\frac{1}{4}}_{(0)}L^2}
\end{align*}
\medskip

\noindent The estimate in $L^2H^{s-\frac{1}{2}}$ is obtained through these terms

\begin{align*}
\tilde{h}^{(n)}_{G}-\tilde{h}_{G_0}&=(\tilde{G}_0-\tilde{G}^{(n)})(\tilde{G}^{T(n)}-\tilde{G}_0^T)(J^P(\tilde{X}^{(n)}))^{-1}\nabla_{\Lambda}\tilde{X}^{(n)}\tilde{n}_0\\[2mm]
&-\tilde{G}_0(\tilde{G}^{T(n)}-\tilde{G}^T_0)(J^P(\tilde{X}^{(n)}))^{-1}\nabla_{\Lambda}\tilde{X}^{(n)}\tilde{n}_0+(\tilde{G}_0-\tilde{G}^{(n)})\tilde{G}_0^T(J^P(\tilde{X}^{(n)}))^{-1}\nabla_{\Lambda}\tilde{X}^{(n)}\tilde{n}_0\\[2mm]
&-\tilde{G}_0\tilde{G}_0^T((J^P(\tilde{X}^{(n)}))^{-1}-(J^P)^{-1})\nabla_{\Lambda}\tilde{X}^{(n)}\tilde{n}_0-\tilde{G}_0\tilde{G}_0^T(J^P)^{-1}(\nabla_{\Lambda}\tilde{X}^{(n)}-\mathcal{I})\tilde{n}_0\\[2mm]
&+((J^P(\tilde{X}^{(n)}))^{-1}-(J^P)^{-1})\nabla_{\Lambda}\tilde{X}^{(n)}\tilde{n}_0+(J^P)^{-1}(\nabla_{\Lambda}\tilde{X}^{(n)}-\mathcal{I})\tilde{n}_0=\sum_{i=1}^7 d_i^h.
\end{align*}
\medskip

\noindent We show only the estimate of the most difficult term $d_1^h$, by using the trace theorem, lemma \ref{Jp-est}, lemma \ref{zeta-est} and \eqref{flux-estim}, \eqref{defgrad-estim}.

\begin{align*}
&\|d_1^h\|_{L^2H^{s-\frac{1}{2}}}\leq \|\tilde{G}_0-\tilde{G}^{(n)}\|_{L^2H^{s-\frac{1}{2}}}\|\tilde{G}^{T(n)}-\tilde{G}_0^T\|_{L^{\infty}H^{s-\frac{1}{2}}}\|(J^P(\tilde{X}^{(n)}))^{-1}\|_{L^{\infty}H^{s-\frac{1}{2}}}\|\nabla_{\Lambda}\tilde{X}^{(n)}\|_{L^{\infty}H^{s-\frac{1}{2}}}\\[2mm]
&\hspace{1.9cm}\leq T^{\frac{3}{4}}\|\tilde{G}_0-\tilde{G}^{(n)}\|_{L^{\infty}H^s}\|\tilde{G}^{T(n)}\|_{L^{\infty}H^s}\|(J^P(\tilde{X}^{(n)}))^{-1}\|_{L^{\infty}H^s}\|\nabla_{\Lambda}\tilde{X}^{(n)}\|_{L^{\infty}H^s}\\[2mm]
&\hspace{1.9cm}\leq C(\tilde{v}_0, \tilde{G}_0) T^{\frac{3}{4}}.
\end{align*}
\medskip

\noindent So we deduce $\sum_{i=2}^7\|d_i^h\|_{L^2H^{s-\frac{1}{2}}}\leq C(\tilde{v}_0, \tilde{G}_0) T^{\frac{3}{4}}.$
\bigskip

\noindent For the estimates in $H^{\frac{s}{2}-\frac{1}{4}}_{(0)}([0,T])L^2(\partial\tilde{\Omega}_0)$, we use lemma \ref{lem3}, with $\eta>\frac{1}{2}$, lemma \ref{lem5}, lemma \ref{lem4}, with $\frac{1}{p}=\frac{1}{2}-\eta'$ and $\frac{1}{q}=\frac{1}{2}+\eta'$, Lemma \ref{lem6} and trace theorem \ref{parabolic-trace}. Furthermore we need also lemma \ref{Jp-est}, lemma \ref{zeta-est} and \eqref{flux-estim}, \eqref{defgrad-estim}. Thus in order to satisfy the hypothesis of these lemmas we need an appropriate splitting.

\begin{align*}
&\tilde{h}^{(n)}_{G}-\tilde{h}_{G_0}=(\tilde{G}_0-\tilde{G}^{(n)})(\tilde{G}^{T(n)}-\tilde{G}_0^T)((J^P(\tilde{X}^{(n)}))^{-1}-(J^P)^{-1})(\nabla_{\Lambda}\tilde{X}^{(n)}-\mathcal{I})\tilde{n}_0\\[2mm]
&\hspace{0.5cm}+(\tilde{G}_0-\tilde{G}^{(n)})(\tilde{G}^{T(n)}-\tilde{G}_0^T)(J^P)^{-1}(\nabla_{\Lambda}\tilde{X}^{(n)}-\mathcal{I})\tilde{n}_0\\[2mm]
&\hspace{0.5cm}-\tilde{G}_0(\tilde{G}^{T(n)}-\tilde{G}_0^T)((J^P(\tilde{X}^{(n)}))^{-1}-(J^P)^{-1})(\nabla_{\Lambda}\tilde{X}^{(n)}-\mathcal{I})\tilde{n}_0\\[2mm]
&\hspace{0.5cm}-\tilde{G}_0(\tilde{G}^{T(n)}-\tilde{G}_0^T)(J^P)^{-1}(\nabla_{\Lambda}\tilde{X}^{(n)}-\mathcal{I})\tilde{n}_0\\[2mm]
&\hspace{0.5cm}+(\tilde{G}_0-\tilde{G}^{(n)})\tilde{G}_0^T((J^P(\tilde{X}^{(n)}))^{-1}-(J^P)^{-1})(\nabla_{\Lambda}\tilde{X}^{(n)}-\mathcal{I})\tilde{n}_0\\[2mm]
&\hspace{0.5cm}+(\tilde{G}_0-\tilde{G}^{(n)})\tilde{G}_0^T(J^P)^{-1}(\nabla_{\Lambda}\tilde{X}^{(n)}-\mathcal{I})\tilde{n}_0-\tilde{G}_0\tilde{G}_0^T((J^P(\tilde{X}^{(n)}))^{-1}-(J^P)^{-1})(\nabla_{\Lambda}\tilde{X}^{(n)}-\mathcal{I})\tilde{n}_0,\\[2mm]
&\hspace{0.5cm}-\tilde{G}_0\tilde{G}_0^T)(J^P)^{-1}(\nabla_{\Lambda}\tilde{X}^{(n)}-\mathcal{I})\tilde{n}_0+(\tilde{G}_0-\tilde{G}^{(n)})\tilde{G}_0^T((J^P(\tilde{X}^{(n)}))^{-1}-(J^P)^{-1})\tilde{n}_0\\[2mm]
&\hspace{0.5cm}+(\tilde{G}_0-\tilde{G}^{(n)})\tilde{G}_0^T(J^P)^{-1}\tilde{n}_0+(\tilde{G}_0-\tilde{G}^{(n)})(\tilde{G}^{T(n)}-\tilde{G}_0^T)((J^P(\tilde{X}^{(n)}))^{-1}-(J^P)^{-1})\tilde{n}_0\\[2mm]
&\hspace{0.5cm}+(\tilde{G}_0-\tilde{G}^{(n)})(\tilde{G}^{T(n)}-\tilde{G}_0^T)(J^P)^{-1}\tilde{n}_0+\tilde{G}_0\tilde{G}_0^T((J^P(\tilde{X}^{(n)}))^{-1}-(J^P)^{-1})\tilde{n}_0\\[2mm]
&\hspace{0.5cm}-\tilde{G}_0(\tilde{G}^{T(n)}-\tilde{G}_0^T)((J^P(\tilde{X}^{(n)}))^{-1}-(J^P)^{-1})\tilde{n}_0-\tilde{G}_0(\tilde{G}^{T(n)}-\tilde{G}_0^T)(J^P)^{-1}\tilde{n}_0\\[2mm]
&\hspace{0.5cm}+((J^P(\tilde{X}^{(n)}))^{-1}-(J^P)^{-1})(\nabla_{\Lambda}\tilde{X}^{(n)}-\mathcal{I})\tilde{n}_0+(J^P)^{-1}(\nabla_{\Lambda}\tilde{X}^{(n)}-\mathcal{I})\tilde{n}_0\\[2mm]
&\hspace{0.5cm}+((J^P(\tilde{X}^{(n)}))^{-1}-(J^P)^{-1})\tilde{n}_0=\sum_{i=1}^{18} d_i^h.
\end{align*}
\medskip

\noindent We focus on the estimate of $d_1^h$, where we have more terms

\begin{align*}
&\|d_1^h\|_{H^{\frac{s}{2}-\frac{1}{4}}_{(0)}L^2}\leq\|(\tilde{G}_0-\tilde{G}^{(n)})(\tilde{G}^{T(n)}-\tilde{G}_0^T)\|_{H^{\frac{s}{2}-\frac{1}{4}}_{(0)}L^2}\|((J^P(\tilde{X}^{(n)}))^{-1}-(J^P)^{-1})(\nabla_{\Lambda}\tilde{X}^{(n)}-\mathcal{I})\|_{H^{\frac{s}{2}-\frac{1}{4}}_{(0)}H^{\frac{1}{2}+\eta}}\\[2mm]
&\leq\|\tilde{G}_0-\tilde{G}^{(n)}\|_{H^{\frac{s}{2}-\frac{1}{4}}_{(0)}H^{\frac{1}{2}+\eta'}}\|\tilde{G}^{T(n)}-\tilde{G}_0^T\|_{H^{\frac{s}{2}-\frac{1}{4}}_{(0)}H^{\frac{1}{2}-\eta'}}\|J^P(\tilde{X}^{(n)}))^{-1}-(J^P)^{-1}\|_{H^{\frac{s}{2}-\frac{1}{4}}_{(0)}H^{\frac{1}{2}+\eta}}
\end{align*}
\begin{align*}
&\hspace{0.5cm}\cdot\|\nabla_{\Lambda}\tilde{X}^{(n)}-\mathcal{I}\|_{H^{\frac{s}{2}-\frac{1}{4}}_{(0)}H^{\frac{1}{2}+\eta}}\leq \|\tilde{G}_0-\tilde{G}^{(n)}\|_{H^{\frac{s}{2}-\frac{1}{4}}_{(0)}H^{1+\eta'}}\|\tilde{G}^{T(n)}-\tilde{G}_0^T\|_{H^{\frac{s}{2}-\frac{1}{4}}_{(0)}H^{1-\eta'}}\\[2mm]
&\hspace{0.5cm}\cdot\|J^P(\tilde{X}^{(n)}))^{-1}-(J^P)^{-1}\|_{H^{\frac{s}{2}-\frac{1}{4}}_{(0)}H^{1+\eta}}\|\nabla_{\Lambda}\tilde{X}^{(n)}-\mathcal{I}\|_{H^{\frac{s}{2}-\frac{1}{4}}_{(0)}H^{1+\eta}}\leq C(\tilde{v}_0,\tilde{G}_0) T^{\beta_1}.
\end{align*}
\medskip

\noindent However $\sum_{i=2}^{18}\|d_i^h\|_{H^{\frac{s}{2}-\frac{1}{4}}_{(0)}L^2}\leq \sum_{i=2}^{18} C(\tilde{v}_0,\tilde{G}_0) T^{\beta_i}$. Thus for a suitable $\beta>0$

\begin{equation}\label{ball-h}
\begin{split}
&\|\tilde{h}^{(n)}_w\|_{\mathcal{K}^{s-\frac{1}{2}}_{(0)}}+\|\tilde{h}^{(n)}_{w^T}\|_{\mathcal{K}^{s-\frac{1}{2}}_{(0)}}+\|\tilde{h}^{(n)}_{\phi}\|_{\mathcal{K}^{s-\frac{1}{2}}_{(0)}}+\|\tilde{h}^{(n)}_{\phi^T}\|_{\mathcal{K}^{s-\frac{1}{2}}_{(0)}}+
\|\tilde{h}^{(n)}_q\|_{\mathcal{K}^{s-\frac{1}{2}}_{(0)}}\\[2mm]
&\hspace{5.5cm}+\|\tilde{h}^{(n)}_G-\tilde{h}_{G_0}\|_{\mathcal{K}^{s-\frac{1}{2}}_{(0)}}\leq C(\tilde{v}_0,\tilde{G}_0) T^{\beta}
\end{split}
\end{equation}
\medskip

\noindent We can put  together the estimates \eqref{ball-f}, \eqref{ball-g}, \eqref{ball-h} and choose, as in Proposition \ref{Gn}, $\varrho=\min\{\delta,\theta,\beta\}$ in order to get the thesis of Part 1.
\medskip

\noindent\textbf{Part 2.}\\

\noindent For this part we have to take the differences so the terms $\tilde{f}^L_{\phi}, \bar{g}^L_{\phi}, \tilde{h}^L_{\phi}$ desappear. Then it is enough to show

\begin{align*}
&\|\tilde{f}^{(n)}-\tilde{f}^{(n-1)}\|_{\mathcal{K}^{s-1}_{(0)}}\leq C(\tilde{v}_0,\tilde{G}_0)T^{\delta}\left( \|\tilde{w}^{(n)}-\tilde{w}^{(n-1)}\|_{\mathcal{K}^{s+1}_{(0)}}+\|\tilde{q}_w^{(n)}-\tilde{q}_w^{(n-1)}\|_{\mathcal{K}^{s}_{pr(0)}}\right.\\[2mm]
&\hspace{6.1cm}\left.+\|\tilde{G}^{(n)}-\tilde{G}^{(n-1)}\|_{\mathcal{F}^{s,\gamma-1}}+\|\tilde{X}^{(n)}-\tilde{X}^{(n-1)}\|_{\mathcal{F}^{s+1,\gamma}}\right),\\[5mm]
&\|\tilde{g}^{(n)}-\tilde{g}^{(n-1)}\|_{\mathcal{\bar{K}}^{s}_{(0)}}\leq C(\tilde{v}_0,\tilde{G}_0)T^{\theta}\left(\|\tilde{w}^{(n)}-\tilde{w}^{(n-1)}\|_{\mathcal{K}^{s+1}_{(0)}}+\|\tilde{X}^{(n)}-\tilde{X}^{(n-1)}\|_{\mathcal{F}^{s+1,\gamma}}\right),\\[5mm]
&\|\tilde{h}^{(n)}-\tilde{h}^{(n-1)}\|_{\mathcal{K}^{s-\frac{1}{2}}_{(0)}}\leq C(\tilde{v}_0,\tilde{G}_0) T^{\beta}\left( \|\tilde{w}^{(n)}-\tilde{w}^{(n-1)}\|_{\mathcal{K}^{s+1}_{(0)}}+\|\tilde{q}_w^{(n)}-\tilde{q}_w^{(n-1)}\|_{\mathcal{K}^{s}_{pr(0)}}\right.\\[2mm]
&\hspace{6.1cm}\left.+\|\tilde{G}^{(n)}-\tilde{G}^{(n-1)}\|_{\mathcal{F}^{s,\gamma-1}}+\|\tilde{X}^{(n)}-\tilde{X}^{(n-1)}\|_{\mathcal{F}^{s+1,\gamma}}\right),
\end{align*}

\medskip

\noindent \underline{ \textbf{Estimate for $\tilde{f}^{(n)}-\tilde{f}^{(n-1)}$}}\\ \\

\noindent The term $\tilde{f}^{(n)}$ could be split in four terms $\tilde{f}^{(n)}=\tilde{f}^{(n)}_w+\tilde{f}^{(n)}_{\phi}+\tilde{f}^{(n)}_q+\tilde{f}^{(n)}_G$, as we already show in \eqref{split-f}. The estimates for $\tilde{f}^{(n)}_w-\tilde{f}^{(n-1)}_w$, $\tilde{f}^{(n)}_{\phi}-\tilde{f}^{(n-1)}_{\phi}$, $\tilde{f}^{(n)}_q-\tilde{f}^{(n-1)}_q$ can be found in \cite[Proposition 5.4]{CCFGG2}, so we only consider $\tilde{f}^{(n)}_G-\tilde{f}^{(n-1)}_G=J^P(\tilde{X}^{(n)})\tilde{G}^{(n)}\tilde{\zeta}^{(n)}\nabla\tilde{G}^{(n)}-J^P(\tilde{X}^{(n-1)})\tilde{G}^{(n-1)}\tilde{\zeta}^{(n-1)}\nabla\tilde{G}^{(n-1)}$. 

\begin{align*}
\|\tilde{f}^{(n)}_G-\tilde{f}^{(n-1)}_G\|_{\mathcal{K}^{s-1}_{(0)}}\leq \|\tilde{f}^{(n)}_G-\tilde{f}^{(n-1)}_G\|_{L^2H^{s-1}}+\|\tilde{f}^{(n)}_G-\tilde{f}^{(n-1)}_G\|_{H^{\frac{s-1}{2}}_{(0)}L^2}.
\end{align*}

\noindent For the estimate in $L^2H^{s-1}$, we have to split $\tilde{f}^{(n)}_G-\tilde{f}^{(n-1)}_G$ in such a way that we can use lemma \ref{Jp-est},lemma \ref{Jp-dif-est}, lemma \ref{zeta-est}  and lemma \ref{zeta-dif-est} and estimate \eqref{defgrad-estim}.

\begin{align*}
&\tilde{f}^{(n)}_G-\tilde{f}^{(n-1)}_G=(J^P(\tilde{X}^{(n)})-J^P(\tilde{X}^{(n-1)}))(\tilde{G}^{(n)}-\tilde{G}_0)\tilde{\zeta}^{(n)}(\nabla \tilde{G}^{(n)}-\nabla\tilde{G}_0)\\[1mm]
&\hspace{0.5cm}+(J^P(\tilde{X}^{(n)})-J^P(\tilde{X}^{(n-1)}))\tilde{G}_0\tilde{\zeta}^{(n)}(\nabla \tilde{G}^{(n)}-\nabla\tilde{G}_0)\\[1mm]
&\hspace{0.5cm}+(J^P(\tilde{X}^{(n)})-J^P(\tilde{X}^{(n-1)}))(\tilde{G}^{(n)}-\tilde{G}_0)\tilde{\zeta}^{(n)}\nabla\tilde{G}_0\\[1mm]
&\hspace{0.5cm}+(J^P(\tilde{X}^{(n)})-J^P(\tilde{X}^{(n-1)}))\tilde{G}_0\tilde{\zeta}^{(n)}\nabla\tilde{G}_0+J^P(\tilde{X}^{(n-1)})(\tilde{G}^{(n)}-\tilde{G}^{(n-1)})\tilde{\zeta}^{(n)}(\nabla \tilde{G}^{(n)}-\nabla\tilde{G}_0)\\[1mm]
&\hspace{0.5cm}+J^P(\tilde{X}^{(n-1)})(\tilde{G}^{(n)}-\tilde{G}^{(n-1)})\tilde{\zeta}^{(n)}\nabla\tilde{G}_0\\[1mm]
&\hspace{0.5cm}+J^P(\tilde{X}^{(n-1)})(\tilde{G}^{(n-1)}-\tilde{G}_0)(\tilde{\zeta}^{(n)}-\tilde{\zeta}^{(n-1)})(\nabla \tilde{G}^{(n)}-\nabla\tilde{G}_0)\\[1mm]
&\hspace{0.5cm}+J^P(\tilde{X}^{(n-1)})(\tilde{G}^{(n-1)}-\tilde{G}_0)(\tilde{\zeta}^{(n)}-\tilde{\zeta}^{(n-1)})\nabla\tilde{G}_0\\[1mm]
&\hspace{0.5cm}+J^P(\tilde{X}^{(n-1)})\tilde{G}_0(\tilde{\zeta}^{(n)}-\tilde{\zeta}^{(n-1)})(\nabla \tilde{G}^{(n)}-\nabla\tilde{G}_0)\\[1mm]
&\hspace{0.5cm}+J^P(\tilde{X}^{(n-1)})\tilde{G}_0(\tilde{\zeta}^{(n)}-\tilde{\zeta}^{(n-1)})\nabla\tilde{G}_0+J^P(\tilde{X}^{(n-1)})(\tilde{G}^{(n-1)}-\tilde{G}_0)\tilde{\zeta}^{(n-1)}(\nabla \tilde{G}^{(n)}-\nabla\tilde{G}^{(n-1)})\\[1mm]
&\hspace{0.5cm}+J^P(\tilde{X}^{(n-1)})\tilde{G}_0\tilde{\zeta}^{(n-1)}(\nabla \tilde{G}^{(n)}-\nabla\tilde{G}^{(n-1)})=\sum_{i=1}^{12} d_i^f.
\end{align*}
\medskip

\noindent We reserve the right to estimate just the most difficult terms, such as  $d_1^f, d_5^f, d_7^f$ and $d_{11}^f$.

\begin{align*}
&\|d_{1}^f\|_{L^2H^{s-1}}\\[1mm]
&\hspace{0.5cm}\leq \|J^P(\tilde{X}^{(n)})-J^P(\tilde{X}^{(n-1)})\|_{L^{\infty}H^{s-1}}\|\tilde{G}^{(n)}-\tilde{G}_0\|_{L^2H^{s-1}}\|\tilde{\zeta}^{(n)}\|_{L^{\infty}H^{s-1}}\|\nabla \tilde{G}^{(n)}-\nabla\tilde{G}_0\|_{L^{\infty}H^{s-1}}\\[2mm]
&\hspace{0.5cm}\leq C(\tilde{v}_0)\|\tilde{X}^{(n)}-\tilde{X}^{(n-1)}\|_{L^{\infty}H^{s-1}}T^{\frac{1}{2}}\|\tilde{G}^{(n)}-\tilde{G}_0\|_{L^{\infty}H^{s-1}}\|\tilde{G}^{(n)}-\tilde{G}_0\|_{L^{\infty}H^{s}}\\[2mm]
&\hspace{0.5cm}\leq C(\tilde{v}_0,\tilde{G}_0) T^{\frac{3}{4}}\|\tilde{X}^{(n)}-\tilde{X}^{(n-1)}\|_{L^{\infty}_{\frac{1}{4}}H^{s+1}}\leq C(\tilde{v}_0,\tilde{G}_0) T^{\frac{3}{4}}\|\tilde{X}^{(n)}-\tilde{X}^{(n-1)}\|_{\mathcal{F}^{s+1,\gamma}},\\[5mm]
&\|d_{5}^f\|_{L^2H^{s-1}}\leq\|J^P(\tilde{X}^{(n-1)})\|_{L^{\infty}H^{s-1}}\|\tilde{G}^{(n)}-\tilde{G}^{(n-1)}\|_{L^{\infty}H^{s-1}}\|\tilde{\zeta}^{(n)}\|_{L^{\infty}H^{s-1}}\|\nabla \tilde{G}^{(n)}-\nabla\tilde{G}_0\|_{L^2H^{s-1}}\\[2mm]
&\hspace{0.5cm}\leq C(\tilde{v}_0,\tilde{G}_0) T^{\frac{3}{4}}\|\tilde{G}^{(n)}-\tilde{G}^{(n-1)}\|_{L^{\infty}_{\frac{1}{4}}H^{s}}\leq C(\tilde{v}_0,\tilde{G}_0) T^{\frac{3}{4}}\|\tilde{G}^{(n)}-\tilde{G}^{(n-1)}\|_{\mathcal{F}^{s,\gamma-1}},\\[5mm]
&\|d_{7}^f\|_{L^2H^{s-1}}\\[1mm]
&\hspace{0.5cm}\leq \|J^P(\tilde{X}^{(n-1)})\|_{L^{\infty}H^{s-1}}\|\tilde{G}^{(n-1)}-\tilde{G}_0\|_{L^2H^{s-1}}\|\tilde{\zeta}^{(n)}-\tilde{\zeta}^{(n-1)}\|_{L^{\infty}H^{s-1}}\|\nabla \tilde{G}^{(n)}-\nabla\tilde{G}_0\|_{L^{\infty}H^{s-1}}\\[2mm]
&\hspace{0.5cm}\leq C(\tilde{v}_0,\tilde{G}_0) T^{\frac{3}{4}}\|\tilde{X}^{(n)}-\tilde{X}^{(n-1)}\|_{L^{\infty}_{\frac{1}{4}}H^{s}}\leq C(\tilde{v}_0,\tilde{G}_0) T^{\frac{3}{4}}\|\tilde{X}^{(n)}-\tilde{X}^{(n-1)}\|_{\mathcal{F}^{s+1,\gamma}},\\[5mm]
&\|d_{11}^f\|_{L^2H^{s-1}}\\[1mm]
&\hspace{0.5cm}\leq\|J^P(\tilde{X}^{(n-1)})\|_{L^{\infty}H^{s-1}}\|\tilde{G}^{(n-1)}-\tilde{G}_0\|_{L^2H^{s-1}}\|\tilde{\zeta}^{(n-1)}\|_{L^{\infty}H^{s-1}}\|\nabla \tilde{G}^{(n)}-\nabla\tilde{G}^{(n-1)}\|_{L^{\infty}H^{s-1}}\\[2mm]
&\hspace{0.5cm}\leq C(\tilde{v}_0,\tilde{G}_0)T^{\frac{1}{2}}\|\tilde{G}^{(n)}-\tilde{G}^{(n-1)}\|_{L^{\infty}H^{s}}\leq C(\tilde{v}_0,\tilde{G}_0)T^{\frac{3}{4}}\|\tilde{G}^{(n)}-\tilde{G}^{(n-1)}\|_{\mathcal{F}^{s,\gamma-1}}.
\end{align*}
\medskip

\noindent It remains to show the estimates in $H^{\frac{s-1}{2}}_{(0)}L^2$, by appropriate splittings. First of all we separate $\tilde{f}^{(n)}_G-\tilde{f}^{(n-1)}_G$ as below

\begin{align*}
&I_1=(J^P(\tilde{X}^{(n)})-J^P(\tilde{X}^{(n-1)}))\tilde{G}^{(n)}\tilde{\zeta}^{(n)}\nabla \tilde{G}^{(n)},\\[2mm]
&I_2=J^P(\tilde{X}^{(n-1)})(\tilde{G}^{(n)}-\tilde{G}^{(n-1)})\tilde{\zeta}^{(n)}\nabla \tilde{G}^{(n)},\\[2mm]
&I_3=J^P(\tilde{X}^{(n-1)})\tilde{G}^{(n-1)}(\tilde{\zeta}^{(n)}-\tilde{\zeta}^{(n-1)})\nabla \tilde{G}^{(n)},\\[2mm]
&I_4=J^P(\tilde{X}^{(n-1)})\tilde{G}^{(n-1)}\tilde{\zeta}^{(n-1)})(\nabla \tilde{G}^{(n)}-\nabla \tilde{G}^{(n-1)}).
\end{align*}
\medskip
\noindent For these four terms we have to do estimates by using preiminary lemmas which require some zero conditions at $t=0$, for that reason we split again, in such a way that all the hypothesis of lemmas are satisfied. For $I_1$, we have

\begin{align*}
I_1&=(J^P(\tilde{X}^{(n)})-J^P(\tilde{X}^{(n-1)}))(\tilde{G}^{(n)}-\tilde{G}_0)(\tilde{\zeta}^{(n)}-\mathcal{I})(\nabla \tilde{G}^{(n)}-\nabla\tilde{G}_0)\\[1mm]
&+(J^P(\tilde{X}^{(n)})-J^P(\tilde{X}^{(n-1)}))\tilde{G}_0(\tilde{\zeta}^{(n)}-\mathcal{I})(\nabla \tilde{G}^{(n)}-\nabla\tilde{G}_0)\\[1mm]
&+(J^P(\tilde{X}^{(n)})-J^P(\tilde{X}^{(n-1)}))(\tilde{G}^{(n)}-\tilde{G}_0)(\tilde{\zeta}^{(n)}-\mathcal{I})\nabla\tilde{G}_0\\[1mm]
&+(J^P(\tilde{X}^{(n)})-J^P(\tilde{X}^{(n-1)}))\tilde{G}_0(\tilde{\zeta}^{(n)}-\mathcal{I})\nabla\tilde{G}_0\\[1mm]
&+(J^P(\tilde{X}^{(n)})-J^P(\tilde{X}^{(n-1)}))(\tilde{G}^{(n)}-\tilde{G}_0)(\nabla \tilde{G}^{(n)}-\nabla\tilde{G}_0)\\[1mm]
&+(J^P(\tilde{X}^{(n)})-J^P(\tilde{X}^{(n-1)}))\tilde{G}_0(\nabla \tilde{G}^{(n)}-\nabla\tilde{G}_0)\\[1mm]
&+(J^P(\tilde{X}^{(n)})-J^P(\tilde{X}^{(n-1)}))(\tilde{G}^{(n)}-\tilde{G}_0)\nabla\tilde{G}_0+(J^P(\tilde{X}^{(n)})-J^P(\tilde{X}^{(n-1)}))\tilde{G}_0\nabla\tilde{G}_0=\sum_{i=1}^{8}d_{i}^f
\end{align*}
\medskip

\noindent We want to show the most significant estimate, which is $d_1^f$, by using lemma \ref{lem3}, lemma \ref{lem5}, lemma \ref{lem4}, with $\frac{1}{q}=\eta'$. Moreover we need also lemma \ref{Jp-dif-est}, lemma \ref{zeta-est}, lemma \ref{lem2} and lemma \ref{lem6} but also \eqref{flux-estim} and \eqref{defgrad-estim}.

\begin{align*}
&\|d_1^f\|_{H^{\frac{s-1}{2}}_{(0)}L^2}\leq\|(J^P(\tilde{X}^{(n)})-J^P(\tilde{X}^{(n-1)}))(\tilde{\zeta}^{(n)}-\mathcal{I})\|_{H^{\frac{s-1}{2}}_{(0)}H^{1+\eta}}\|(\tilde{G}^{(n)}-\tilde{G}_0)(\nabla \tilde{G}^{(n)}-\nabla\tilde{G}_0)\|_{H^{\frac{s-1}{2}}_{(0)}L^2}\\[2mm]
&\leq \|J^P(\tilde{X}^{(n)})-J^P(\tilde{X}^{(n-1)})\|_{H^{\frac{s-1}{2}}_{(0)}H^{1+\eta}}\|\tilde{\zeta}^{(n)}-\mathcal{I}\|_{H^{\frac{s-1}{2}}_{(0)}H^{1+\eta}}\|\tilde{G}^{(n)}-\tilde{G}_0\|_{H^{\frac{s-1}{2}}_{(0)}H^{1-\eta'}}\|\nabla \tilde{G}^{(n)}-\nabla\tilde{G}_0\|_{H^{\frac{s-1}{2}}_{(0)}H^{\eta'}}\\[2mm]
&\leq C(\tilde{v}_0,\tilde{G}_0) \|\tilde{X}^{(n)}-\tilde{X}^{(n)}\|_{H^{\frac{s-1}{2}}_{(0)}H^{1+\eta}}\leq C(\tilde{v}_0,\tilde{G}_0) \left\|\partial_t\int_0^t\tilde{X}^{(n)}-\tilde{X}^{(n)}\right\|_{H^{\frac{s-1}{2}+\delta_1-\delta_1}_{(0)}H^{1+\eta}}\\[2mm]
&\leq C(\tilde{v}_0,\tilde{G}_0) T^{\delta_1}\|\tilde{X}^{(n)}-\tilde{X}^{(n-1)}\|_{H^{\frac{s-1}{2}+\delta_1}_{(0)}H^{1+\eta}}\leq C(\tilde{v}_0,\tilde{G}_0) T^{\delta_1}\|\tilde{X}^{(n)}-\tilde{X}^{(n-1)}\|_{\mathcal{F}^{s+1,\gamma}}.
\end{align*}
\medskip

\noindent For $i=2,\ldots, 8$, we have $\|d_i^f\|_{H^{\frac{s-1}{2}}_{(0)}L^2}\leq C(\tilde{v}_0,\tilde{G}_0) T^{\delta_i}\|\tilde{X}^{(n)}-\tilde{X}^{(n)}\|_{\mathcal{F}^{s+1,\gamma}}.$ Then we rewrite $I_2$, as follows

\begin{align*}
I_2&=(J^P(\tilde{X}^{(n-1)})-J^P)(\tilde{G}^{(n)}-\tilde{G}^{(n-1)})(\tilde{\zeta}^{(n)}-\mathcal{I})(\nabla \tilde{G}^{(n)}-\nabla\tilde{G}_0)\\[1mm]
&+J^P(\tilde{G}^{(n)}-\tilde{G}^{(n-1)})(\tilde{\zeta}^{(n)}-\mathcal{I})(\nabla \tilde{G}^{(n)}-\nabla\tilde{G}_0)\\[1mm]
&+(J^P(\tilde{X}^{(n-1)})-J^P)(\tilde{G}^{(n)}-\tilde{G}^{(n-1)})(\tilde{\zeta}^{(n)}-\mathcal{I})\nabla\tilde{G}_0\\[1mm]
&+J^P(\tilde{G}^{(n)}-\tilde{G}^{(n-1)})(\tilde{\zeta}^{(n)}-\mathcal{I})\nabla\tilde{G}_0+(J^P(\tilde{X}^{(n-1)})-J^P)(\tilde{G}^{(n)}-\tilde{G}^{(n-1)})(\nabla \tilde{G}^{(n)}-\nabla\tilde{G}_0)\\[1mm]
&+J^P(\tilde{G}^{(n)}-\tilde{G}^{(n-1)})(\nabla \tilde{G}^{(n)}-\nabla\tilde{G}_0)+(J^P(\tilde{X}^{(n-1)})-J^P)(\tilde{G}^{(n)}-\tilde{G}^{(n-1)})\nabla\tilde{G}_0\\[1mm]
&+J^P(\tilde{G}^{(n)}-\tilde{G}^{(n-1)})\nabla\tilde{G}_0=\sum_{i=9}^{16} d_i^f.
\end{align*}
\medskip

\noindent By using lemma \ref{lem3}, lemma \ref{lem5}, lemma \ref{lem4}, with $\frac{1}{q}=\eta'$, in addition lemma \ref{Jp-est}, lemma \ref{zeta-est} and \eqref{flux-estim}, \eqref{defgrad-estim} and finally lemma \ref{lem2} and lemma \ref{lem6}, we are able to estimate $d_9^f$, for all the other terms we need to use only some of these lemmas.

\begin{align*}
&\|d_9^f\|_{H^{\frac{s-1}{2}}_{(0)}L^2}\leq\|(J^P(\tilde{X}^{(n-1)})-J^P)(\tilde{\zeta}^{(n)}-\mathcal{I})\|_{H^{\frac{s-1}{2}}_{(0)}H^{1+\eta}}\|(\tilde{G}^{(n)}-\tilde{G}^{(n-1)})(\nabla \tilde{G}^{(n)}-\nabla\tilde{G}_0)\|_{H^{\frac{s-1}{2}}_{(0)}L^2}\\[2mm]
&\leq \|J^P(\tilde{X}^{(n-1)})-J^P\|_{H^{\frac{s-1}{2}}_{(0)}H^{1+\eta}}\|\tilde{\zeta}^{(n)}-\mathcal{I}\|_{H^{\frac{s-1}{2}}_{(0)}H^{1+\eta}}\|\tilde{G}^{(n)}-\tilde{G}^{(n-1)}\|_{H^{\frac{s-1}{2}}_{(0)}H^{1-\eta'}}\|\nabla \tilde{G}^{(n)}-\nabla\tilde{G}_0\|_{H^{\frac{s-1}{2}}_{(0)}H^{\eta'}}\\[2mm]
&\leq C(\tilde{v}_0,\tilde{G}_0) \left\|\partial_t\int_0^t\tilde{G}^{(n)}-\tilde{G}^{(n-1)}\right\|_{H^{\frac{s-1}{2}+\delta_9-\delta_9}_{(0)}H^{1-\eta'}}\leq C(\tilde{v}_0,\tilde{G}_0) T^{\delta_9}\|\tilde{G}^{(n)}-\tilde{G}^{(n-1)}\|_{H^{\frac{s-1}{2}+\delta_9}_{(0)}H^{1+\eta'}}\\[2mm]
&\leq C(\tilde{v}_0,\tilde{G}_0) T^{\delta_9}\|\tilde{G}^{(n)}-\tilde{G}^{(n-1)}\|_{\mathcal{F}^{s+1,\gamma}}.
\end{align*}
\medskip

\noindent Similar results occur for $d_i^f$, with $i=10,\ldots,16$. In particular $\|d_i^f\|_{H^{\frac{s-1}{2}}_{(0)}L^2}\leq C(\tilde{v}_0,\tilde{G}_0) T^{\delta_i}\|\tilde{G}^{(n)}-\tilde{G}^{(n-1)}\|_{\mathcal{F}^{s+1,\gamma}}.$ For $I_3$ the splitting is given by these terms

\begin{align*}
I_3&=(J^P(\tilde{X}^{(n-1)})-J^P)(\tilde{G}^{(n)}-\tilde{G}_0)(\tilde{\zeta}^{(n)}-\tilde{\zeta}^{(n-1)})(\nabla \tilde{G}^{(n)}-\nabla\tilde{G}_0)\\[2mm]
&+J^P(\tilde{G}^{(n-1)}-\tilde{G}_0)(\tilde{\zeta}^{(n)}-\tilde{\zeta}^{(n-1)})(\nabla \tilde{G}^{(n)}-\nabla\tilde{G}_0)\\[2mm]
&+(J^P(\tilde{X}^{(n-1)})-J^P)(\tilde{G}^{(n-1)}-\tilde{G}_0)(\tilde{\zeta}^{(n)}-\tilde{\zeta}^{(n-1)})\nabla\tilde{G}_0+J^P(\tilde{G}^{(n)}-\tilde{G}_0)(\tilde{\zeta}^{(n)}-\tilde{\zeta}^{(n-1)})\nabla\tilde{G}_0\\[2mm]
&+(J^P(\tilde{X}^{(n-1)})-J^P)\tilde{G}_0(\tilde{\zeta}^{(n)}-\tilde{\zeta}^{(n-1)})(\nabla \tilde{G}^{(n)}-\nabla\tilde{G}_0)+J^P\tilde{G}_0(\tilde{\zeta}^{(n)}-\tilde{\zeta}^{(n-1)})(\nabla \tilde{G}^{(n)}-\nabla\tilde{G}_0)\\[2mm]
&+(J^P(\tilde{X}^{(n-1)})-J^P)\tilde{G}_0(\tilde{\zeta}^{(n)}-\tilde{\zeta}^{(n-1)})\nabla\tilde{G}_0+J^P\tilde{G}_0(\tilde{\zeta}^{(n)}-\tilde{\zeta}^{(n-1)})\nabla\tilde{G}_0=\sum_{i=17}^{24} d_i^f.
\end{align*}
\medskip

\noindent For the estimate of these terms we can proceed exactly in the same way as we did for $I_1$, the only difference is that here instead of using lemma \ref{Jp-dif-est} we use lemma \ref{Jp-est} and instead of lemma \ref{zeta-est} we use lemma \ref{zeta-dif-est}. In the end we can conclude that the result is the same, hence for $i=17,\ldots,24$

$$\|d_i^f\|_{H^{\frac{s-1}{2}}_{(0)}L^2}\leq C(\tilde{v}_0,\tilde{G}_0) T^{\delta_i}\|\tilde{X}^{(n)}-\tilde{X}^{(n)}\|_{\mathcal{F}^{s+1,\gamma}}.$$
\medskip

\noindent For the last term $I_4$ we can easily conclude that it can be splitted in the same way as $I_2$, then the splitting will give eight terms, whose estimates are exactly the same as $I_2$, thus 

\begin{align*}
I_4&=(J^P(\tilde{X}^{(n-1)})-J^P)(\tilde{G}^{(n-1)}-\tilde{G}_0)(\tilde{\zeta}^{(n-1)}-\mathcal{I})(\nabla \tilde{G}^{(n)}-\nabla\tilde{G}^{(n-1)})\\[1mm]
&+J^P(\tilde{G}^{(n-1)}-\tilde{G}_0)(\tilde{\zeta}^{(n-1)}-\mathcal{I})(\nabla \tilde{G}^{(n)}-\nabla\tilde{G}^{(n-1)})\\[1mm]
&+(J^P(\tilde{X}^{(n-1)})-J^P)\tilde{G}_0(\tilde{\zeta}^{(n-1)}-\mathcal{I})(\nabla \tilde{G}^{(n)}-\nabla\tilde{G}^{(n-1)})+J^P\tilde{G}_0(\tilde{\zeta}^{(n-1)}-\mathcal{I})(\nabla \tilde{G}^{(n)}-\nabla\tilde{G}^{(n-1)})\\[1mm]
&+(J^P(\tilde{X}^{(n-1)})-J^P)(\tilde{G}^{(n-1)}-\tilde{G}_0)(\nabla \tilde{G}^{(n)}-\nabla\tilde{G}^{(n-1)})+J^P(\tilde{G}^{(n-1)}-\tilde{G}_0)(\nabla \tilde{G}^{(n)}-\nabla\tilde{G}^{(n-1)})\\[1mm]
&+(J^P(\tilde{X}^{(n-1)})-J^P)\tilde{G}_0(\nabla \tilde{G}^{(n)}-\nabla\tilde{G}^{(n-1)})+J^P\tilde{G}_0(\nabla \tilde{G}^{(n)}-\nabla\tilde{G}^{(n-1)})=\sum_{i=25}^{32} d_i^f.
\end{align*}
\medskip

\noindent Finally, for $i=25,\ldots,32$ we get

$$\|d_i^f\|_{H^{\frac{s-1}{2}}_{(0)}L^2}\leq C(\tilde{v}_0,\tilde{G}_0) T^{\delta_i}\|\tilde{G}^{(n)}-\tilde{G}^{(n-1)}\|_{\mathcal{F}^{s+1,\gamma}}.$$
\bigskip

\noindent \underline{ \textbf{Estimate for $\tilde{h}^{(n)}-\tilde{h}^{(n-1)}$}}.\\ \\
\noindent We can split $\tilde{h}^{(n)}$ in four terms $\tilde{h}^{(n)}=\tilde{h}^{(n)}_v+\tilde{h}^{(n)}_{v^T}+\tilde{h}^{(n)}_q+\tilde{h}^{(n)}_G$, see \eqref{split-h}. We estimate $\tilde{h}^{(n)}_G-\tilde{h}^{(n-1)}_G$. In \cite[Proposition 5.4]{CCFGG2}, one can find the other terms.

\begin{align*}
\|\tilde{h}^{(n)}_G-\tilde{h}^{(n-1)}_G\|_{\mathcal{K}^{s-\frac{1}{2}}_{(0)}}\leq \|\tilde{h}^{(n)}_G-\tilde{h}^{(n-1)}_G\|_{L^2H^{s-\frac{1}{2}}}+\|\tilde{h}^{(n)}_G-\tilde{h}^{(n-1)}_G\|_{H^{\frac{s}{2}-\frac{1}{4}}_{(0)}L^2}.
\end{align*}

\noindent At this point it is clear how to split this difference in both spaces in order to end up in the right estimate. We start in $L^2H^{s-\frac{1}{2}}$ and we have

\begin{equation}\label{h-split-L2}
\begin{split}
&\tilde{h}^{(n)}_G-\tilde{h}^{(n-1)}_G=(\tilde{G}^{(n-1)}-\tilde{G}_0)(\tilde{G}^{T(n-1)}-\tilde{G}_0^T)J^P(\tilde{X}^{(n-1)})^{-1}( \nabla_{\Lambda} \tilde{X}^{(n-1)}-\nabla_{\Lambda} \tilde{X}^{(n)}) \tilde{n}_0\\[1mm]
&\hspace{0.5cm}+\tilde{G}_0(\tilde{G}^{T(n-1)}-\tilde{G}_0^T)J^P(\tilde{X}^{(n-1)})^{-1}( \nabla_{\Lambda} \tilde{X}^{(n-1)}-\nabla_{\Lambda} \tilde{X}^{(n)}) \tilde{n}_0\\[1mm]
&\hspace{0.5cm}+(\tilde{G}^{(n-1)}-\tilde{G}_0)\tilde{G}_0^TJ^P(\tilde{X}^{(n-1)})^{-1}( \nabla_{\Lambda} \tilde{X}^{(n-1)}-\nabla_{\Lambda} \tilde{X}^{(n)}) \tilde{n}_0\\[1mm]
&\hspace{0.5cm}+\tilde{G}_0\tilde{G}_0^TJ^P(\tilde{X}^{(n-1)})^{-1}( \nabla_{\Lambda} \tilde{X}^{(n-1)}-\nabla_{\Lambda} \tilde{X}^{(n)}) \tilde{n}_0\\[1mm]
&\hspace{0.5cm}+(\tilde{G}^{(n-1)}-\tilde{G}_0)(\tilde{G}^{T(n-1)}-\tilde{G}_0^T)(J^P(\tilde{X}^{(n-1)})^{-1}-J^P(\tilde{X}^{(n)})^{-1})\nabla_{\Lambda} \tilde{X}^{(n)} \tilde{n}_0,\\[1mm]
&\hspace{0.5cm}+\tilde{G}_0(\tilde{G}^{T(n-1)}-\tilde{G}_0^T)(J^P(\tilde{X}^{(n-1)})^{-1}-J^P(\tilde{X}^{(n)})^{-1})\nabla_{\Lambda} \tilde{X}^{(n)} \tilde{n}_0\\[1mm]
&\hspace{0.5cm}+(\tilde{G}^{(n-1)}-\tilde{G}_0)\tilde{G}_0^T (J^P(\tilde{X}^{(n-1)})^{-1}-J^P(\tilde{X}^{(n)})^{-1})\nabla_{\Lambda} \tilde{X}^{(n)} \tilde{n}_0\\[1mm]
&\hspace{0.5cm}+\tilde{G}_0\tilde{G}_0^T (J^P(\tilde{X}^{(n-1)})^{-1}-J^P(\tilde{X}^{(n)})^{-1})\nabla_{\Lambda} \tilde{X}^{(n)} \tilde{n}_0\\[1mm]
&\hspace{0.5cm}+(\tilde{G}^{(n-1)}-\tilde{G}_0)(\tilde{G}^{T(n-1)}-\tilde{G}^{T(n)})J^P(\tilde{X}^{(n)})^{-1} \nabla_{\Lambda} \tilde{X}^{(n)} \tilde{n}_0\\[1mm]
&\hspace{0.5cm}+\tilde{G}_0(\tilde{G}^{T(n-1)}-\tilde{G}^{T(n)})J^P(\tilde{X}^{(n)})^{-1} \nabla_{\Lambda} \tilde{X}^{(n)} \tilde{n}_0
\end{split}
\end{equation}
\begin{align*}
&\hspace{0.5cm}+( \tilde{G}^{(n-1)}-\tilde{G}^{(n)})( \tilde{G}^{T(n)}-\tilde{G}_0^T)J^P(\tilde{X}^{(n)})^{-1} \nabla_{\Lambda} \tilde{X}^{(n)} \tilde{n}_0\\[1mm]
&\hspace{0.5cm}+( \tilde{G}^{(n-1)}-\tilde{G}^{(n)})\tilde{G}_0^TJ^P(\tilde{X}^{(n)})^{-1} \nabla_{\Lambda} \tilde{X}^{(n)} \tilde{n}_0+J^P(\tilde{X}^{(n)})^{-1}( \nabla_{\Lambda} \tilde{X}^{(n)} -\nabla_{\Lambda} \tilde{X}^{(n-1)}) \tilde{n}_0\\[1mm]
&\hspace{0.5cm}+(J^P(\tilde{X}^{(n)})^{-1} -J^P(\tilde{X}^{(n-1)})^{-1}) \nabla_{\Lambda} \tilde{X}^{(n-1)}\tilde{n}_0=\sum_{i=1}^{14} d_i^h.
\end{align*}
\medskip

\noindent We show only the most relevant estimates since the others can be deduced from them.

\begin{align*}
&\|d_{1}^h\|_{L^2H^{s-\frac{1}{2}}}\leq \|\tilde{G}^{(n-1)}-\tilde{G}_0\|_{L^2H^{s-\frac{1}{2}}}\|\tilde{G}^{T(n-1)}-\tilde{G}_0^T\|_{L^{\infty}H^{s-\frac{1}{2}}} \|J^P(\tilde{X}^{(n-1)})^{-1}\|_{L^{\infty}H^{s-\frac{1}{2}}}\\[2mm]
&\hspace{2cm}\cdot\| \nabla_{\Lambda} \tilde{X}^{(n-1)}-\nabla_{\Lambda} \tilde{X}^{(n)}\|_{L^{\infty}H^{s-\frac{1}{2}}}\\[2mm]
&\leq T^{\frac{1}{2}} \|\tilde{G}^{(n-1)}-\tilde{G}_0\|_{L^{\infty}H^{s}}\|\tilde{G}^{T(n-1)}-\tilde{G}_0^T\|_{L^{\infty}H^s} \|J^P(\tilde{X}^{(n-1)})^{-1}\|_{L^{\infty}H^{s}}\| \nabla_{\Lambda} \tilde{X}^{(n-1)}-\nabla_{\Lambda} \tilde{X}^{(n)}\|_{L^{\infty}H^s}\\[2mm]
&\leq C(\tilde{v}_0,\tilde{G}_0)T^{\frac{3}{4}}\|\tilde{X}^{(n)}- \tilde{X}^{(n-1)}\|_{L^{\infty}_{\frac{1}{4}}H^{s+1}}\leq C(\tilde{v}_0,\tilde{G}_0)T^{\frac{3}{4}}\|\tilde{X}^{(n)}- \tilde{X}^{(n-1)}\|_{\mathcal{F}^{s+1,\gamma}},\\[5mm]
&\|d_{11}^h\|_{L^2H^{s-\frac{1}{2}}}\leq\|\tilde{G}^{(n-1)}-\tilde{G}^{(n)}\|_{L^{\infty}H^{s-\frac{1}{2}}}\| \tilde{G}^{T(n)}-\tilde{G}_0^T\|_{L^2H^{s-\frac{1}{2}}}\|J^P(\tilde{X}^{(n)})^{-1} \|_{L^{\infty}H^{s-\frac{1}{2}}}\|\nabla_{\Lambda} \tilde{X}^{(n)}\|_{L^{\infty}H^{s-\frac{1}{2}}}\\[2mm]
&\leq T^{\frac{1}{2}} \|\tilde{G}^{(n-1)}-\tilde{G}^{(n)}\|_{L^{\infty}H^{s}}\| \tilde{G}^{T(n)}-\tilde{G}_0^T\|_{L^{\infty}H^{s}}\|J^P(\tilde{X}^{(n)})^{-1} \|_{L^{\infty}H^{s}}\|\nabla_{\Lambda} \tilde{X}^{(n)}\|_{L^{\infty}H^{s}}\\[2mm]
&\leq C(\tilde{v}_0,\tilde{G}_0) T^{\frac{3}{4}}\|\tilde{G}^{(n)}-\tilde{G}^{(n-1)}\|_{L^{\infty}_{\frac{1}{4}}H^{s}}\leq C(\tilde{v}_0,\tilde{G}_0) T^{\frac{3}{4}}\|\tilde{G}^{(n)}-\tilde{G}^{(n-1)}\|_{\mathcal{F}^{s,\gamma-1}},
\end{align*}

\noindent where we used trace theorem \ref{parabolic-trace}, lemma \ref{Jp-est}, lemma \ref{zeta-dif-est} and \eqref{flux-estim}, \eqref{defgrad-estim} for $d_1^h$. For $d_{11}^h$, we used the same lemmas but instead of lemma \ref{zeta-dif-est} we applied lemma \ref{zeta-est}. We also state that for $i=2,\ldots,8$, the estimates is the same as $d_1^h$ so

$$\|d_i^h\|_{L^2H^{s-\frac{1}{2}}}\leq C(\tilde{v}_0,\tilde{G}_0)T^{\frac{3}{4}}\|\tilde{X}^{(n)}- \tilde{X}^{(n-1)}\|_{\mathcal{F}^{s+1,\gamma}},$$

\noindent and for $j=9,10$ and $j=12,\ldots,18$ the estimates are similar to $d_{11}^h$, thus

$$\|d_j^h\|_{L^2H^{s-\frac{1}{2}}}\leq C(\tilde{v}_0,\tilde{G}_0)T^{\frac{3}{4}}\|\tilde{G}^{(n)}-\tilde{G}^{(n-1)}\|_{\mathcal{F}^{s,\gamma-1}}.$$
\medskip

\noindent Now we can proceed with the estimates in $H^{\frac{s}{2}-\frac{1}{4}}_{(0)}L^2$. The main difficulty with respect to the result in $L^2 H^{s-\frac{1}{2}}$ is that in this case the lemmas we have to use require all terms to be zero at $t=0$, in order to have constants independent of time. For this reason we need accurate splitting, but we want to avoid to write it because it is made of 32 terms. We remark that unlike the splitting \eqref{h-split-L2}, where it is enough to estimate $J^P(\tilde{X})^{-1}$ or $\nabla_{\Lambda}\tilde{X}$.  In $H^{\frac{s}{2}-\frac{1}{4}}_{(0)}L^2$, we analyze $J^P(\tilde{X})^{-1}-(J^P)^{-1}$ and $\nabla_{\Lambda}\tilde{X}-\mathcal{I}$. We show, as before, the estimates of the key terms, which give us the desired differences. 

\begin{align*}
&\|(\tilde{G}^{(n-1)}-\tilde{G}_0)(\tilde{G}^{T(n-1)}-\tilde{G}_0^T)(J^P(\tilde{X}^{(n-1)})^{-1}-(J^P)^{-1})( \nabla_{\Lambda} \tilde{X}^{(n-1)}-\nabla_{\Lambda} \tilde{X}^{(n)})\|_{H^{\frac{s}{2}-\frac{1}{4}}_{(0)}L^2}\\[2mm]
&\leq\|(\tilde{G}^{(n-1)}-\tilde{G}_0)(\tilde{G}^{T(n-1)}-\tilde{G}_0^T)\|_{H^{\frac{s}{2}-\frac{1}{4}}_{(0)}L^2}\|(J^P(\tilde{X}^{(n-1)})^{-1}-(J^P)^{-1})( \nabla_{\Lambda} \tilde{X}^{(n-1)}-\nabla_{\Lambda} \tilde{X}^{(n)})\|_{H^{\frac{s}{2}-\frac{1}{4}}_{(0)}H^{\frac{1}{2}+\eta}}
\end{align*}
\begin{align*}
&\leq \|\tilde{G}^{(n-1)}-\tilde{G}_0\|_{H^{\frac{s}{2}-\frac{1}{4}}_{(0)}H^{\frac{1}{2}-\eta'}}\|\tilde{G}^{T(n-1)}-\tilde{G}_0^T\|_{H^{\frac{s}{2}-\frac{1}{4}}_{(0)}H^{\frac{1}{2}+\eta'}}\|J^P(\tilde{X}^{(n-1)})^{-1}-(J^P)^{-1}\|_{H^{\frac{s}{2}-\frac{1}{4}}_{(0)}H^{\frac{1}{2}+\eta}}\\[2mm]
&\hspace{1cm}\cdot\|\nabla_{\Lambda} \tilde{X}^{(n-1)}-\nabla_{\Lambda} \tilde{X}^{(n)}\|_{H^{\frac{s}{2}-\frac{1}{4}}_{(0)}H^{\frac{1}{2}+\eta}}\\[2mm]
&\leq \|\tilde{G}^{(n-1)}-\tilde{G}_0\|_{H^{\frac{s}{2}-\frac{1}{4}}_{(0)}H^{1-\eta'}}\|\tilde{G}^{T(n-1)}-\tilde{G}_0^T\|_{H^{\frac{s}{2}-\frac{1}{4}}_{(0)}H^{1+\eta'}}\|J^P(\tilde{X}^{(n-1)})^{-1}-(J^P)^{-1}\|_{H^{\frac{s}{2}-\frac{1}{4}}_{(0)}H^{1+\eta}}\\[2mm]
&\hspace{1cm}\cdot\|\nabla_{\Lambda} \tilde{X}^{(n-1)}-\nabla_{\Lambda} \tilde{X}^{(n)}\|_{H^{\frac{s}{2}-\frac{1}{4}}_{(0)}H^{1+\eta}}\\[2mm]
&\leq C(\tilde{v}_0,\tilde{G}_0) \|\tilde{X}^{(n)}- \tilde{X}^{(n-1)}\|_{H^{\frac{s}{2}-\frac{1}{4}}_{(0)}H^{2+\eta}}\leq C(\tilde{v}_0,\tilde{G}_0) \left\|\partial_t\int_0^t\tilde{X}^{(n)}- \tilde{X}^{(n-1)}\right\|_{H^{\frac{s}{2}-\frac{1}{4}+\beta_1-\beta_1}_{(0)}H^{2+\eta}}\\[2mm]
&\leq C(\tilde{v}_0,\tilde{G}_0) T^{\beta_1}\|\tilde{X}^{(n)}- \tilde{X}^{(n-1)}\|_{H^{\frac{s}{2}-\frac{1}{4}+\beta_1}_{(0)}H^{2+\eta}}\leq C(\tilde{v}_0,\tilde{G}_0) T^{\beta_1}\|\tilde{X}^{(n)}- \tilde{X}^{(n-1)}\|_{\mathcal{F}^{s+1,\gamma}}.
\end{align*}
\medskip

\noindent This result is obtained by applying lemma \ref{lem3}, lemma \ref{lem4}, with $\frac{1}{q}=\frac{1}{2}-\eta'$ and lemma \ref{lem5}. Moreover, we use trace theorem \ref{parabolic-trace}, lemma \ref{Jp-est} and estimate \eqref{flux-estim} and \eqref{defgrad-estim}. Finally lemma \ref{zeta-dif-est}, lemma \ref{lem2} and lemma \ref{lem6} give the final result. Furthermore, we have also the differences of the deformation gradient which are given, for instance, by the following term

\begin{align*}
&\|( \tilde{G}^{(n-1)}-\tilde{G}^{(n)})( \tilde{G}^{T(n)}-\tilde{G}_0^T)(J^P(\tilde{X}^{(n)})^{-1} -(J^P)^{-1})(\nabla_{\Lambda} \tilde{X}^{(n)}-\mathcal{I})\|_{H^{\frac{s}{2}-\frac{1}{4}}_{(0)}L^2}\\[2mm]\
&\leq \|\tilde{G}^{(n-1)}-\tilde{G}^{(n)}\|_{H^{\frac{s}{2}-\frac{1}{4}}_{(0)}H^{\frac{1}{2}-\eta'}}\|\tilde{G}^{T(n-1)}-\tilde{G}_0^T\|_{H^{\frac{s}{2}-\frac{1}{4}}_{(0)}H^{\frac{1}{2}+\eta'}}\|J^P(\tilde{X}^{(n)})^{-1}-(J^P)^{-1}\|_{H^{\frac{s}{2}-\frac{1}{4}}_{(0)}H^{\frac{1}{2}+\eta}}\\[2mm]
&\hspace{1cm}\cdot\|\nabla_{\Lambda} \tilde{X}^{(n)}-\mathcal{I}\|_{H^{\frac{s}{2}-\frac{1}{4}}_{(0)}H^{\frac{1}{2}+\eta}}\\[2mm]
&\leq \|\tilde{G}^{(n-1)}-\tilde{G}^{(n)}\|_{H^{\frac{s}{2}-\frac{1}{4}}_{(0)}H^{1-\eta'}}\|\tilde{G}^{T(n-1)}-\tilde{G}_0^T\|_{H^{\frac{s}{2}-\frac{1}{4}}_{(0)}H^{1+\eta'}}\|J^P(\tilde{X}^{(n)})^{-1}-(J^P)^{-1}\|_{H^{\frac{s}{2}-\frac{1}{4}}_{(0)}H^{1+\eta}}\\[2mm]
&\hspace{1cm}\cdot\|\nabla_{\Lambda} \tilde{X}^{(n)}-\mathcal{I}\|_{H^{\frac{s}{2}-\frac{1}{4}}_{(0)}H^{1+\eta}}\\[2mm]
&\leq C(\tilde{v}_0,\tilde{G}_0) \|\tilde{G}^{(n-1)}-\tilde{G}^{(n)}\|_{H^{\frac{s}{2}-\frac{1}{4}}_{(0)}H^{1-\eta'}}\leq C(\tilde{v}_0,\tilde{G}_0) \left\|\partial_t\int_0^t\tilde{G}^{(n-1)}-\tilde{G}^{(n)}\right\|_{H^{\frac{s}{2}-\frac{1}{4}+\beta_2-\beta_2}_{(0)}H^{1-\eta'}}\\[2mm]
&\leq C(\tilde{v}_0,\tilde{G}_0) T^{\beta_2}\|\tilde{G}^{(n-1)}-\tilde{G}^{(n)}\|_{\mathcal{F}^{s,\gamma-1}}.
\end{align*}
\medskip

\noindent We remark that we use the same lemmas as above but here we use also lemma \ref{zeta-est}.

\noindent  Finally, as in \cite[Proposition 5.4]{CCFGG2} 

\begin{align*}
\|\bar{g}^{(n)}-\bar{g}^{(n-1)}\|_{\mathcal{\bar{K}}^{s}_{(0)}}&\leq C(\tilde{v}_0,\tilde{G}_0) T^{\theta} \left(\|\tilde{X}^{(n)}-\tilde{X}^{(n-1)}\|_{\mathcal{F}^{s+1,\gamma}}+\|\tilde{w}^{(n)}-\tilde{w}^{(n-1)}\|_{\mathcal{K}^{s+1}_{(0)}}\right).
\end{align*}

\noindent In the end, if we put all the estimates together and if we choose $\varrho=\min\{\frac{3}{4},\delta_i,\beta_j,\theta\},$ for $i=1,\ldots, 32$ and $j=1,\ldots,32$ then we prove  Lemma \ref{fixed point}  and consequently also Theorem \ref{localex}.
\end{proof}

\section{Stability Estimates}\label{sec5}
\noindent In order to prove stability we choose a one-parameter  family of initial conditions $\tilde{\Omega}_{\varepsilon}(0)$ and $\tilde{v}_{\varepsilon}(0)$, such that
$$\tilde{\Omega}_{\varepsilon}(0)=\tilde{\Omega}_0+\varepsilon b,$$
\noindent where $b$ is a constant vector, $|b|=1$, such that $P^{-1}(\tilde{\Omega}_{\varepsilon}(0))$ is not a self-intersecting domain, as in this figures below.

\begin{figure}[htbp]
\centering
\includegraphics[scale=0.48] {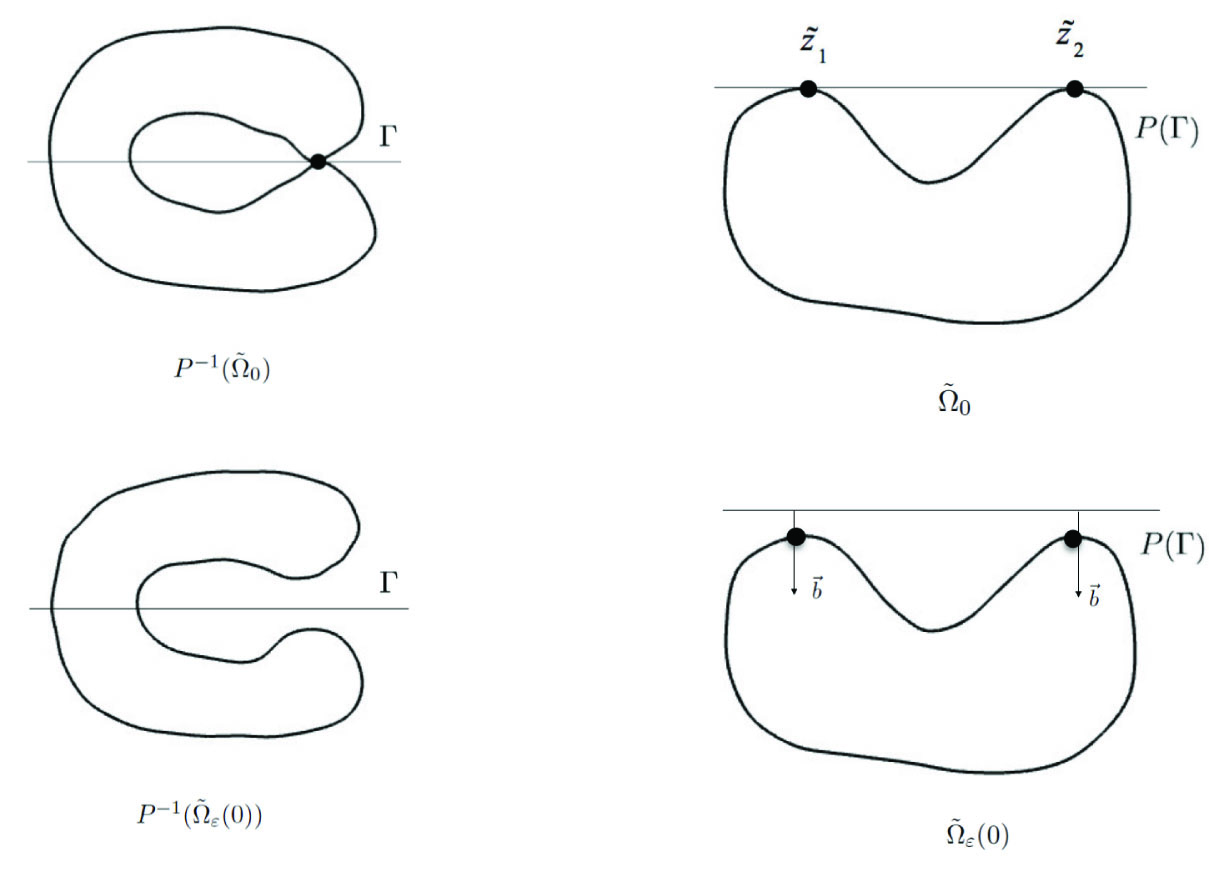}
\caption{}
\end{figure}

\noindent We compare the solution $(\tilde{w}, \tilde{q}, \tilde{X}, \tilde{G})$ and the solution $(\tilde{w}_{\varepsilon}, \tilde{q}_{\varepsilon}, \tilde{X}_{\varepsilon}, \tilde{G}_{\varepsilon})$. Let us consider the following system

\begin{equation}\label{w-stabality}
\left\{\begin{array}{lll}
\displaystyle\partial_t(\tilde{w}-\tilde{w}_{\varepsilon})-Q^2\Delta(\tilde{w}-\tilde{w}_{\varepsilon})+(J^P)^T\nabla(\tilde{q}_w-\tilde{q}_{w,\varepsilon})=\tilde{F}_{\varepsilon}\\ [3mm]
\displaystyle \trace(\nabla(\tilde{w}-\tilde{w}_{\varepsilon})J^P)=\tilde{K}_{\varepsilon}\\ [3mm]
[-(\tilde{q}_w-\tilde{q}_{w,\varepsilon})\mathcal{I}+\nabla(\tilde{w}-\tilde{w}_{\varepsilon})J^P+(\nabla(\tilde{w}-\tilde{w}_{\varepsilon})J^P)^T](J^P)^{-1}\tilde{n}_0=\tilde{H}_{\varepsilon}\\[3mm]
\displaystyle \tilde{w}_0-\tilde{w}_{\varepsilon,0}=0,
\end{array}\right.
\end{equation}
\medskip

\noindent where
\begin{align*}
&\tilde{F}_{\varepsilon}=\tilde{f}-\tilde{f}_{\varepsilon}+\tilde{f}_{\phi}^L-\tilde{f}_{\phi,\varepsilon}^L+(Q^2-Q^2_{\varepsilon})\Delta \tilde{w}_{\varepsilon}-((J^P)^T-(J^P)^T_{\varepsilon})\nabla \tilde{q}_{w,\varepsilon},\\
&\tilde{K}_{\varepsilon}=\tilde{g}-\tilde{g}_{\varepsilon}+\tilde{g}_{\phi}^L-\tilde{g}_{\phi,\varepsilon}^L-\trace(\nabla\tilde{w}_{\varepsilon}(J^P-J^P_{\varepsilon})),\\
&\tilde{H_{\varepsilon}}=\tilde{h}-\tilde{h}_{\varepsilon}+\tilde{h}_{\phi}^L-\tilde{h}_{\phi,\varepsilon}^L+\tilde{q}_{w,\varepsilon}((J^P)^{-1}-(J^P_{\varepsilon})^{-1})\tilde{n}_0-(\nabla\tilde{w}_{\varepsilon}J^P)(J^P)^{-1}\tilde{n}_0\\
&\hspace{0.5cm} -(\nabla\tilde{w}_{\varepsilon}J^P)^T (J^P)^{-1}\tilde{n}_0+(\nabla\tilde{w}_{\varepsilon}J^P_{\varepsilon})(J^P_{\varepsilon})^{-1}\tilde{n}_0+(\nabla\tilde{w}_{\varepsilon}J^P_{\varepsilon})^T(J^P_{\varepsilon})^{-1}\tilde{n}_0,
\end{align*}

\medskip
\noindent because of the definition of the velocity field, we have
\begin{align*}
&\tilde{f}_{\phi}^L-\tilde{f}_{\phi,\varepsilon}^L=-\frac{d}{dt}(\phi-\phi_{\varepsilon})+Q^2\Delta\phi-Q^2_{\varepsilon}\Delta\phi_{\varepsilon}-(J^P)^T\nabla \tilde{q}_{\phi}+(J^P_{\varepsilon})^T\nabla \tilde{q}_{\phi,\varepsilon},\\
&\tilde{g}_{\phi}^L-\tilde{g}_{\phi,\varepsilon}^L=-\trace(\nabla\phi J^P)+\trace(\nabla\phi_{\varepsilon}J^P_{\varepsilon}),\\
&\tilde{h}_{\phi}^L-\tilde{h}_{\phi,\varepsilon}^L=\tilde{q}_{\phi}(J^P)^{-1}n_0-\tilde{q}_{\phi,\varepsilon}(J^P_{\varepsilon})^{-1}n_0-[(\nabla\phi J^P)+(\nabla\phi J^P)^T](J^P)^{-1}n_0\\
&\hspace{1.6cm}+[(\nabla\phi_{\varepsilon}J^P_{\varepsilon})
+(\nabla\phi_{\varepsilon} J^P_{\varepsilon})^T](J^P_{\varepsilon})^{-1}n_0,
\end{align*}
\medskip

\noindent where $J^P_{\varepsilon}(\tilde{\alpha})=J^P(\tilde{\alpha}+\varepsilon b)$ and $Q_{\varepsilon}^2(\tilde{\alpha})=Q^2(\tilde{\alpha}+\varepsilon b)$. Furthermore the function $\phi_{\varepsilon}$ is contructed exactly as we did in the previous Section, in order to satisfy all the hypothesis of Lemma \ref{invL}.

$$\phi_{\varepsilon}=\tilde{v}_0+t(Q^2_{\varepsilon}\Delta\tilde{v}_0-(J^P)^T_{\varepsilon}\nabla q_{\phi,\varepsilon} + J^P_{\varepsilon}\tilde{G}_{0}\nabla \tilde{G}_{0})=\tilde{v}_0+t\hat{\phi}_{\varepsilon}.$$
\medskip

\noindent For the flux we have that $\tilde{X}_{\varepsilon}$ satisfies

\begin{equation}
\left\{\begin{array}{lll}
\displaystyle \frac{d}{dt} \tilde{X}_{\varepsilon}(t,\tilde{\alpha})=J^P(\tilde{X}_{\varepsilon}(t,\tilde{\alpha}))\tilde{ v}_{\varepsilon}(t,\tilde{\alpha})\\[3mm]
\displaystyle\tilde{ X}_{\varepsilon}(0,\tilde{\alpha})=\tilde{\alpha}+\varepsilon b,
\end{array}\right.
\end{equation}
\medskip

\noindent and so 

$$\tilde{X}-\tilde{X}_{\varepsilon}=-\varepsilon b+\int_0^t \left(J^P( \tilde{X}) \tilde{v}-J^P(\tilde{X}_{\varepsilon}) \tilde{v}_{\varepsilon}\right)\,d\tau.$$

\medskip

\noindent Similar the perturbed deformation gradient $\tilde{G}_{\varepsilon}$ satisfies
\medskip

\begin{equation}
\left\{ \begin{array}{lll}
\displaystyle \partial_t \tilde{G}_{\varepsilon}=J^P( \tilde{X}_{\varepsilon} )\tilde{\zeta}_{\varepsilon}\nabla\tilde{ v}_{\varepsilon} \tilde{G}_{\varepsilon}\\[3mm]
\displaystyle \tilde{G}_{\varepsilon}(0,\tilde{\alpha})=\tilde{G}_0,
\end{array}\right.
\end{equation}
\medskip

\noindent hence
\begin{equation}\label{G-Geps}
\tilde{G}-\tilde{G}_{\varepsilon}=\int_0^t (J^P( \tilde{X})\tilde{\zeta}\nabla \tilde{v} \tilde{G}-J^P( \tilde{X}_{\varepsilon} )\tilde{\zeta}_{\varepsilon}\nabla \tilde{v}_{\varepsilon} \tilde{G}_{\varepsilon})\,d\tau.
\end{equation}

\bigskip
 
\noindent The main stability result we will prove is the following

\begin{theorem}\label{main-stab}
Let $2<s<\frac{5}{2}$ and a suitable $\delta>0$. If $0<T<\frac{1}{(3C)^{\frac{1}{\delta}}}$ then
\begin{equation*}
\|\tilde{X}-\tilde{X}_{\varepsilon}\|_{L^{\infty}H^{s+1}}\leq 3C\varepsilon.
\end{equation*}
\end{theorem}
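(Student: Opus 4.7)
The starting point is the integral identity
$$\tilde{X} - \tilde{X}_{\varepsilon} = -\varepsilon b + \int_0^t \bigl( J^P(\tilde{X})\tilde{v} - J^P(\tilde{X}_{\varepsilon})\tilde{v}_{\varepsilon} \bigr)\, d\tau$$
displayed just before the theorem, together with the relation \eqref{G-Geps} and the linear system \eqref{w-stabality} for the velocity and pressure differences. The plan is to derive a coupled closed inequality, on a small time interval $T$, for the total deviation
$$D(T) := \|\tilde{X} - \tilde{X}_{\varepsilon}\|_{L^{\infty}H^{s+1}} + \|\tilde{G} - \tilde{G}_{\varepsilon}\|_{\mathcal{F}^{s,\gamma-1}} + \|\tilde{w} - \tilde{w}_{\varepsilon}\|_{\mathcal{K}^{s+1}} + \|\tilde{q}_w - \tilde{q}_{w,\varepsilon}\|_{\mathcal{K}^{s}_{pr}}$$
of the form $D(T) \le C\varepsilon + CT^{\delta} D(T)$, and to absorb the last term by smallness of $T$.

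For the flow component, I split
$$J^P(\tilde{X})\tilde{v} - J^P(\tilde{X}_{\varepsilon})\tilde{v}_{\varepsilon} = \bigl(J^P(\tilde{X}) - J^P(\tilde{X}_{\varepsilon})\bigr)\tilde{v} + J^P(\tilde{X}_{\varepsilon})(\tilde{v}-\tilde{v}_{\varepsilon}),$$
apply a mean-value argument to the first bracket, and use Lemma \ref{lem3} and Minkowski's inequality. This produces a bound of the shape
$$\|\tilde{X}-\tilde{X}_{\varepsilon}\|_{L^{\infty}H^{s+1}} \le \varepsilon + CT^{\delta}\|\tilde{X}-\tilde{X}_{\varepsilon}\|_{L^{\infty}H^{s+1}} + CT^{\delta}\|\tilde{v}-\tilde{v}_{\varepsilon}\|_{\mathcal{K}^{s+1}}.$$
Next, reproducing the argument of Proposition \ref{Gn} with the pairs $(G^{(n)},G^{(n-1)})$ and $(v^{(n)},v^{(n-1)})$ replaced respectively by $(\tilde{G},\tilde{G}_{\varepsilon})$ and $(\tilde{v},\tilde{v}_{\varepsilon})$ bounds $\|\tilde{G}-\tilde{G}_{\varepsilon}\|_{\mathcal{F}^{s,\gamma-1}}$ by $C\varepsilon + CT^{\delta}D(T)$; and Lemma \ref{invL} applied to \eqref{w-stabality}, combined with the same product/trace/interpolation estimates used in Proposition \ref{estim-(v,q)}, controls $\|\tilde{w}-\tilde{w}_{\varepsilon}\|_{\mathcal{K}^{s+1}} + \|\tilde{q}_w-\tilde{q}_{w,\varepsilon}\|_{\mathcal{K}^{s}_{pr}}$ also by $C\varepsilon + CT^{\delta}D(T)$. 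The $\varepsilon$ contribution here is generated by the coefficient differences $Q^2-Q^2_{\varepsilon}$, $J^P - J^P_{\varepsilon}$, $\phi - \phi_{\varepsilon}$ and $\tilde{G}_0 - \tilde{G}_{0,\varepsilon}$, all of size $O(\varepsilon)$ because the initial domains differ by a translation of length $\varepsilon$.

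Summing the four estimates yields $D(T) \le C\varepsilon + CT^{\delta} D(T)$, and choosing $T$ so that $CT^{\delta} \le \frac{2}{3}$, which is precisely the hypothesis $T < (3C)^{-1/\delta}$, allows us to absorb the last term on the left and obtain $D(T) \le 3C\varepsilon$; in particular $\|\tilde{X}-\tilde{X}_{\varepsilon}\|_{L^{\infty}H^{s+1}} \le 3C\varepsilon$, as claimed.

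The main technical obstacle is the analogue of Proposition \ref{estim-(v,q)} for the difference system \eqref{w-stabality}. Unlike the iterative setting, where $\tilde{w}(0) = \partial_t \tilde{w}(0) = 0$ by construction of $\phi$, here one must check that the perturbed data $(\tilde{F}_{\varepsilon}, \tilde{K}_{\varepsilon}, \tilde{H}_{\varepsilon})$ satisfy the compatibility conditions of Lemma \ref{invL} up to an $O(\varepsilon)$ error, and that each coefficient difference of the type $Q^2 - Q^2_{\varepsilon}$, $J^P - J^P_{\varepsilon}$, $\phi - \phi_{\varepsilon}$, $\tilde{G}_0 - \tilde{G}_{0,\varepsilon}$ is bounded by $C\varepsilon$ in the appropriate Beale space uniformly in $T$. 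Once these geometric $O(\varepsilon)$ estimates are in place, the nonlinear dependence on the solutions is handled by the same Hölder, multiplication and trace lemmas of Section 2 as in the iterative bounds, and the coupled inequality closes as described.
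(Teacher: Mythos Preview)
Your approach is the same as the paper's: derive a closed inequality for the total deviation and absorb via smallness of $T$. The paper packages the flow estimates as hypotheses (4)--(5) of Lemma~\ref{stab-bound}, proves the $G$--difference as Proposition~\ref{G-eps} and the $(w,q)$--difference as Proposition~\ref{stability}, then sums exactly as you describe.

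There is one technical slip in your choice of norms. You measure $\tilde G-\tilde G_\varepsilon$ in $\mathcal F^{s,\gamma-1}=L^\infty_{1/4}H^s\cap H^2H^{\gamma-1}$, but the weighted norm $\sup_t t^{-1/4}\|\cdot\|_{H^s}$ is infinite here because $(\tilde G-\tilde G_\varepsilon)(0)=-\varepsilon b\neq 0$. The paper avoids this by working with the unweighted pair $L^\infty H^s$ and $H^2H^{\gamma-1}$ separately in Proposition~\ref{G-eps}. For the same reason, and because the $H^{\frac{s-1}{2}}L^2$ and $H^{\frac{s}{2}-\frac14}L^2$ parts of the $(w,q)$ and $G$ estimates produce $\|\tilde X-\tilde X_\varepsilon\|_{H^2H^\gamma}$ on the right (see the proofs of Propositions~\ref{G-eps} and~\ref{stability}), your $D(T)$ must also include $\|\tilde X-\tilde X_\varepsilon\|_{H^2H^\gamma}$, together with its own estimate (point (5) of Lemma~\ref{stab-bound}), or the loop will not close. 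With these two adjustments your argument goes through verbatim.
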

\medskip

\noindent The proof is an outcome of the following Lemma. 

\begin{lemma}\label{stab-bound}
For $2<s<\frac{5}{2}$, a suitable $\delta>0$ and suppose that
\medskip

\begin{enumerate}

\item $\|J^P-J^P_{\varepsilon}\|_{H^r}\leq C\varepsilon,\hspace{0.3cm} \|Q^2- Q_{\varepsilon}^2\|_{H^r}\leq C\varepsilon \hspace{0.3cm}\textrm{for all}\hspace{0.2 cm} r,$ since $Q^2$ and $J^P$ are $C^{\infty}$ functions. \\[1mm]

\item $\|\phi-\phi_{\varepsilon}\|_{L^{\infty}H^{s+1}}\leq C\varepsilon,\hspace{0.3cm} \|\phi-\phi_{\varepsilon}\|_{H^1_{(0)}H^{\gamma}}\leq C\varepsilon$, for smooth $\tilde{v}_0, \tilde{G}_0$.\\[1mm]

\item $\|\tilde{q}_{\phi}-\tilde{q}_{\phi,\varepsilon}\|_{H^{r+1}}\leq C\varepsilon\hspace{0.3cm}\forall r\geq 0.$\\[1mm]

\item $\|\tilde{X}-\tilde{X}_{\varepsilon}+\varepsilon b-t(J^P-J^P_{\varepsilon})\tilde{v}_0\|_{\mathcal{F}^{s+1,\gamma}}\leq C\varepsilon + C T^{\delta}\left(\|\tilde{X}-\tilde{X}_{\varepsilon}+\varepsilon b-t(J^P-J^P_{\varepsilon})\tilde{v}_0\|_{\mathcal{F}^{s+1,\gamma}}\right.\\[2mm]
\left.  \hspace{11.5cm}+ \|\tilde{w}-\tilde{w}_{\varepsilon}\|_{\mathcal{K}^{s+1}_{(0)}}\right).$
\end{enumerate}
\medskip

\noindent Then

\begin{align*}
&\|\tilde{w}-\tilde{w}_{\varepsilon}\|_{\mathcal{K}^{s+1}_{(0)}} +\|\tilde{q}_w-\tilde{q}_{w,\varepsilon}\|_{\mathcal{K}^{s}_{pr}(0)}+\|\tilde{X}-\tilde{X}_{\varepsilon}+\varepsilon b-t(J^P-J^P_{\varepsilon})\tilde{v}_0\|_{\mathcal{F}^{s+1,\gamma}}\\[2mm]
&+\|\tilde{G}-\tilde{G}_{\varepsilon}-t(J^P-J^P_{\varepsilon})\nabla\tilde{v}_0\tilde{G}_0\|_{\mathcal{F}^{s,\gamma-1}}\leq 3C\varepsilon+3CT^{\delta}\left(\|\tilde{w}-\tilde{w}_{\varepsilon}\|_{\mathcal{K}^{s+1}_{(0)}}+\|\tilde{q}_w-\tilde{q}_{w,\varepsilon}\|_{\mathcal{K}^{s}_{pr(0)}}\right.\\[2mm]
&+\left.\|\tilde{X}-\tilde{X}_{\varepsilon}+\varepsilon b-t(J^P-J^P_{\varepsilon})\tilde{v}_0\|_{\mathcal{F}^{s+1,\gamma}}+\|\tilde{G}-\tilde{G}_{\varepsilon}-t(J^P-J^P_{\varepsilon})\nabla\tilde{v}_0\tilde{G}_0\|_{\mathcal{F}^{s,\gamma-1}} \right),
\end{align*}

where the constant $C$ depends only on the initial data. 
\end{lemma}
\medskip

\noindent The  points $(1)-(4)$ of this Lemma are results obtained in \cite[Lemma $6.1$]{CCFGG2}, with small modifications because of the new definition of $\phi, \phi_{\varepsilon}$, which now depend on $\tilde{v}_0$ but also on $\tilde{G}_0$. Then Theorem \ref{main-stab} follows easily and so 

\begin{equation}\label{Xcloseness}
\|\tilde{X}-\tilde{X}_{\varepsilon}\|_{L^{\infty}H^{s+1}}\leq C\varepsilon\hspace{0.5cm}\Rightarrow\hspace{0.5cm}\textrm{dist}(\partial\tilde{\Omega}(t),\partial\tilde{\Omega}_{\varepsilon}(t))\lesssim \varepsilon.
\end{equation}
\medskip

\subsection{Proof of Lemma \ref{stab-bound}}
\noindent In order to prove the Lemma above, we need to estimate both $(\tilde{w}-\tilde{w}_{\varepsilon},\tilde{q}_w-\tilde{q}_{w,\varepsilon})$ and $\tilde{G}-\tilde{G}_{\varepsilon}-t(J^P-J^P_{\varepsilon})\nabla\tilde{v}_0\tilde{G}_0$. For the deformation gradient we prove the following result.
 
\begin{proposition}\label{G-eps}
For a suitable $\beta>0$  and $2<s<\frac{5}{2}$, we have
\begin{align*}
&\|\tilde{G}-\tilde{G}_{\varepsilon}-t(J^P-J^P_{\varepsilon})\nabla\tilde{v}_0\tilde{G}_0\|_{\mathcal{F}^{s,\gamma-1}}\leq C(\tilde{v}_0,\tilde{G}_0)\varepsilon+C(\tilde{v}_0,\tilde{G}_0) T^{\beta} \left(\|\tilde{w}-\tilde{w}_{\varepsilon}\|_{\mathcal{K}^{s+1}_{(0)}}\right.\\[2mm]
&\hspace{1cm}\left.+\|\tilde{G}-\tilde{G}_{\varepsilon}-t(J^P-J^P_{\varepsilon})\nabla\tilde{v}_0\tilde{G}_0\|_{\mathcal{F}^{s,\gamma-1}}+\|\tilde{X}-\tilde{X}_{\varepsilon}+\varepsilon b-t(J^P-J^P_{\varepsilon})\tilde{v}_0\|_{\mathcal{F}^{s+1,\gamma}}\right).
\end{align*}
\end{proposition}

\begin{proof}
\noindent The difference $\tilde{G}-\tilde{G}_{\varepsilon}-t(J^P-J^P_{\varepsilon})\nabla\tilde{v}_0\tilde{G}_0$ has to be estimated in $L^{\infty}_{\frac{1}{4}}H^s$ and $H^{2}_{(0)}H^{\gamma-1}$. As we have already shown in the iterative bounds, the second estimate will be much more complicated. For this reason, we separate the two results. We first deal with $L^{\infty}_{\frac{1}{4}}H^s$ and by using the definition of the norm we have

\begin{align*}
\|\tilde{G}-\tilde{G}_{\varepsilon}-t(J^P-J^P_{\varepsilon})\nabla\tilde{v}_0\tilde{G}_0\|_{L^{\infty}_{\frac{1}{4}}H^s}\leq T^{\frac{1}{4}}\|J^P( \tilde{X})\tilde{\zeta}\nabla \tilde{v} \tilde{G}-J^P( \tilde{X}_{\varepsilon} )\tilde{\zeta}_{\varepsilon}\nabla \tilde{v}_{\varepsilon} \tilde{G}_{\varepsilon}-(J^P-J^P_{\varepsilon})\nabla\tilde{v}_0\tilde{G}_0\|_{L^2H^{s}}.
\end{align*}
\medskip

\noindent Now we take into account the definition of the velocity and we split the difference as follows.

\begin{align*}
&I_1=(J^P( \tilde{X})-J^P( \tilde{X}_{\varepsilon})-J^P+J^P_{\varepsilon})\tilde{\zeta}\nabla \tilde{w} (\tilde{G}-\tilde{G}_0)+(J^P( \tilde{X})-J^P( \tilde{X}_{\varepsilon})-J^P+J^P_{\varepsilon})\tilde{\zeta}\nabla \tilde{w}\tilde{G}_0\\[1mm]
&\hspace{0.5cm}+(J^P-J^P_{\varepsilon})\tilde{\zeta}\nabla \tilde{w} (\tilde{G}-\tilde{G}_0)+(J^P-J^P_{\varepsilon})\tilde{\zeta}\nabla \tilde{w}\tilde{G}_0=I_{1,1}+I_{1,2}+I_{1,3}+I_{1,4},\\[2mm]
&I_2=J^P( \tilde{X}_{\varepsilon})(\tilde{\zeta}-\tilde{\zeta}_{\varepsilon})\nabla\tilde{w}(\tilde{G}-\tilde{G}_0)+J^P( \tilde{X}_{\varepsilon})(\tilde{\zeta}-\tilde{\zeta}_{\varepsilon})\nabla\tilde{w}\tilde{G}_0=I_{2,1}+I_{2,2},\\[2mm]
&I_3=J^P( \tilde{X}_{\varepsilon})\tilde{\zeta}_{\varepsilon}(\nabla \tilde{w}
-\nabla \tilde{w}_{\varepsilon})(\tilde{G}-\tilde{G}_0)+J^P( \tilde{X}_{\varepsilon})\tilde{\zeta}_{\varepsilon}(\nabla \tilde{w}
-\nabla \tilde{w}_{\varepsilon})\tilde{G}_0=I_{3,1}+I_{3,2},
\end{align*}
\begin{align*}
&I_4=J^P(\tilde{X}_{\varepsilon})\tilde{\zeta}_{\varepsilon}\nabla \tilde{w}_{\varepsilon}(\tilde{G}-\tilde{G}_{\varepsilon}),\\[2mm]
&I_5=(J^P( \tilde{X})-J^P( \tilde{X}_{\varepsilon})-J^P+J^P_{\varepsilon})\tilde{\zeta}\nabla \tilde{v}_0 (\tilde{G}-\tilde{G}_0)+(J^P( \tilde{X})-J^P( \tilde{X}_{\varepsilon})-J^P+J^P_{\varepsilon})\tilde{\zeta}\nabla \tilde{v}_0\tilde{G}_0\\[1mm]
&\hspace{0.5cm}+(J^P-J^P_{\varepsilon})\tilde{\zeta}\nabla \tilde{v}_0 (\tilde{G}-\tilde{G}_0)+(J^P-J^P_{\varepsilon})(\tilde{\zeta}-\mathcal{I})\nabla \tilde{v}_0\tilde{G}_0=I_{5,1}+I_{5,2}+I_{5,3}+I_{5,4},\\[2mm]
&I_6=J^P( \tilde{X}_{\varepsilon})(\tilde{\zeta}-\tilde{\zeta}_{\varepsilon})\nabla\tilde{v}_0(\tilde{G}-\tilde{G}_0)+J^P( \tilde{X}_{\varepsilon})(\tilde{\zeta}-\tilde{\zeta}_{\varepsilon})\nabla\tilde{v}_0\tilde{G}_0=I_{6,1}+I_{6,2},\\[2mm]
&I_7=J^P(\tilde{X}_{\varepsilon})\tilde{\zeta}_{\varepsilon}\nabla \tilde{v}_{0}(\tilde{G}-\tilde{G}_{\varepsilon}),\\[2mm]
&I_8=(J^P( \tilde{X})-J^P( \tilde{X}_{\varepsilon})-J^P+J^P_{\varepsilon})\tilde{\zeta}t\nabla \hat{\phi} (\tilde{G}-\tilde{G}_0)+(J^P( \tilde{X})-J^P( \tilde{X}_{\varepsilon})-J^P+J^P_{\varepsilon})\tilde{\zeta}t\nabla \hat{\phi} \tilde{G}_0\\[1mm]
&\hspace{0.5cm}+(J^P-J^P_{\varepsilon})\tilde{\zeta}t\nabla \hat{\phi}(\tilde{G}-\tilde{G}_0)+(J^P-J^P_{\varepsilon})\tilde{\zeta}t\nabla \hat{\phi} \tilde{G}_0=I_{8,1}+I_{8,2}+I_{8,3}+I_{8,4},\\[2mm]
&I_9=J^P( \tilde{X}_{\varepsilon})(\tilde{\zeta}-\tilde{\zeta}_{\varepsilon})t\nabla \hat{\phi} (\tilde{G}-\tilde{G}_0)+J^P( \tilde{X}_{\varepsilon})(\tilde{\zeta}-\tilde{\zeta}_{\varepsilon})t\nabla \hat{\phi} \tilde{G}_0=I_{9,1}+I_{9,2},\\[2mm]
&I_{10}=J^P( \tilde{X}_{\varepsilon})\tilde{\zeta}_{\varepsilon}t(\nabla \hat{\phi}
-\nabla \hat{\phi}_{\varepsilon})(\tilde{G}-\tilde{G}_0)+J^P( \tilde{X}_{\varepsilon})\tilde{\zeta}_{\varepsilon}t(\nabla \hat{\phi}
-\nabla \hat{\phi}_{\varepsilon})\tilde{G}_0=I_{10,1}+I_{10,2},\\[2mm]
&I_{11}=J^P(\tilde{X}_{\varepsilon})\tilde{\zeta}_{\varepsilon}t\nabla \hat{\phi}_{\varepsilon}(\tilde{G}-\tilde{G}_{\varepsilon}).
\end{align*}
\medskip

\noindent We show the estimate of the terms which provide the required differences, so we deal with $I_{1,1}, I_{3,1}$ and $I_{11}$. Moreover, we remark that in $I_j$, for $j=5,6,7$  we have the initial velocity and their estimates are similar to the estimates of $I_j$, for $j=8,9,11$ since the term $\hat{\phi}$ depends only on the initial data too. For $I_{1,1}$ we use lemma \ref{zeta-est}, \eqref{flux-estim},\eqref{defgrad-estim} and the most difficult part is to use lemma \ref{Jp-dif-est}.

\begin{align*}
&T^{\frac{1}{4}}\|I_{1,1}\|_{L^2H^s}\leq T^{\frac{1}{4}}\|J^P( \tilde{X})-J^P( \tilde{X}_{\varepsilon})-J^P+J^P_{\varepsilon}\|_{L^{\infty}H^s}\|\tilde{\zeta}\|_{L^{\infty}H^s}\|\nabla \tilde{w}\|_{L^2H^s}\|\tilde{G}-\tilde{G}_0\|_{L^{\infty}H^s}\\[2mm]
&\leq T^{\frac{1}{4}}\left(\|J^P(\tilde{X}+\varepsilon b)-J^P(\tilde{X}_{\varepsilon})\|_{L^{\infty}H^s}+\|J^P(\tilde{X})-J^P-(J^P(\tilde{X}+\varepsilon b)-J^P_{\varepsilon})\|_{L^{\infty}H^s}\right)\\[2mm]
&\hspace{1cm}\cdot\|\tilde{X}-\tilde{\alpha}\|_{L^{\infty}H^{s+1}}\|\tilde{w}\|_{L^2H^{s+1}}\|\tilde{G}-\tilde{G}_0\|_{L^{\infty}H^s}\\[2mm]
&\leq C(\tilde{v}_0,\tilde{G}_0)T^{\frac{1}{4}}\|\tilde{X}-\tilde{X}_{\varepsilon}+\varepsilon b\|_{L^{\infty}H^{s+1}}+C(\tilde{v}_0,\tilde{G}_0)\varepsilon\\[2mm]
&\leq C(\tilde{v}_0,\tilde{G}_0)\varepsilon+ C(\tilde{v}_0,\tilde{G}_0)T^{\frac{1}{4}}\left(\|\tilde{X}-\tilde{X}_{\varepsilon}+\varepsilon b-t(J^P-J^P_{\varepsilon})\nabla\tilde{v}_0\tilde{G}_0\|_{L^{\infty}H^{s+1}}+\|t(J^P-J^P_{\varepsilon})\nabla\tilde{v}_0\tilde{G}_0\|_{L^{\infty}H^{s+1}}\right)\\[2mm]
&\leq C(\tilde{v}_0,\tilde{G}_0)\varepsilon+ C(\tilde{v}_0,\tilde{G}_0)T^{\frac{1}{4}}\|\tilde{X}-\tilde{X}_{\varepsilon}+\varepsilon b-t(J^P-J^P_{\varepsilon})\nabla\tilde{v}_0\tilde{G}_0\|_{\mathcal{F}^{s+1,\gamma}}.
\end{align*}
\medskip

\noindent For $I_{3,1}$ and $I_{11}$, we use lemma \ref{Jp-est}, lemma \ref{zeta-est} and \eqref{flux-estim}, \eqref{defgrad-estim}.

\begin{align*}
&T^{\frac{1}{4}}\|I_{3,1}\|_{L^2H^{s}}\leq T^{\frac{1}{4}}\|J^P( \tilde{X}_{\varepsilon})\|_{L^{\infty}H^s}\|\tilde{\zeta}_{\varepsilon}\|_{L^{\infty}H^s}\|\nabla \tilde{w}
-\nabla \tilde{w}_{\varepsilon}\|_{L^2H^s}\|\tilde{G}-\tilde{G}_0\|_{L^{\infty}H^s}\\[2mm]
&\hspace{2.2cm}\leq C(\tilde{v}_0,\tilde{G}_0)T^{\frac{1}{4}}\|\tilde{w}-\tilde{w}_{\varepsilon}\|_{\mathcal{K}^{s+1}_{(0)}}.\\[4mm]
\end{align*}
\begin{align*}
&T^{\frac{1}{4}} \|I_{11}\|_{L^2H^s}\leq T^{\frac{1}{4}} \|J^P(\tilde{X}_{\varepsilon})\|_{L^{\infty}H^s}\|\tilde{\zeta}_{\varepsilon}\|_{L^{\infty}H^s} \|t\nabla \hat{\phi}_{\varepsilon}\|_{L^2H^s}\left(\|\tilde{G}-\tilde{G}_{\varepsilon}-t(J^P-J^P_{\varepsilon})\nabla\tilde{v}_0\tilde{G}_0\|_{L^{\infty}H^s}\right.\\
&\left.\hspace{10.5cm}+\|t(J^P-J^P_{\varepsilon})\nabla\tilde{v}_0\tilde{G}_0\|_{L^{\infty}H^s}\right)\\[2mm]
&\hspace{2.2cm}\leq C(\tilde{v}_0,\tilde{G}_0) T^2 \|\tilde{G}-\tilde{G}_{\varepsilon}-t(J^P-J^P_{\varepsilon})\nabla\tilde{v}_0\tilde{G}_0\|_{\mathcal{F}^{s,\gamma-1}}.
\end{align*}
\medskip

\noindent  To get the estimate in $H^2_{(0)}H^{\gamma-1}$, we observe the following result by using lemma \ref{lem2} 

\begin{align*}
&\|\tilde{G}-\tilde{G}_{\varepsilon}-t(J^P-J^P_{\varepsilon})\nabla\tilde{v}_0\tilde{G}_0\|_{H^2_{(0)}H^{\gamma-1}}\\
&\hspace{1cm}\leq \left\|\int_0^t J^P( \tilde{X})\tilde{\zeta}\nabla \tilde{v} \tilde{G}-J^P( \tilde{X}_{\varepsilon} )\tilde{\zeta}_{\varepsilon}\nabla \tilde{v}_{\varepsilon} \tilde{G}_{\varepsilon}-(J^P-J^P_{\varepsilon})\nabla\tilde{v}_0\tilde{G}_0\right\|_{H^2_{(0)}H^{\gamma-1}}\\[2mm]
&\hspace{1cm}\leq\left\|J^P( \tilde{X})\tilde{\zeta}\nabla \tilde{v} \tilde{G}-J^P( \tilde{X}_{\varepsilon} )\tilde{\zeta}_{\varepsilon}\nabla \tilde{v}_{\varepsilon} \tilde{G}_{\varepsilon}-(J^P-J^P_{\varepsilon})\nabla\tilde{v}_0\tilde{G}_0\right\|_{H^1_{(0)}H^{\gamma-1}}
\end{align*}
\medskip

\noindent We rewrite the difference in a convenient way, that allow us to apply all the necessary lemmas in order to obtain the final estimate. Thus

\begin{align*}
&I_1=(J^P( \tilde{X})-J^P( \tilde{X}_{\varepsilon})-J^P+J^P_{\varepsilon})(\tilde{\zeta}-\mathcal{I})\nabla \tilde{w} (\tilde{G}-\tilde{G}_0)\\[1mm]
&\hspace{0.5cm}+(J^P( \tilde{X})-J^P( \tilde{X}_{\varepsilon})-J^P+J^P_{\varepsilon})(\tilde{\zeta}-\mathcal{I})\nabla \tilde{w}\tilde{G}_0\\[1mm]
&\hspace{0.5cm}+(J^P-J^P_{\varepsilon})(\tilde{\zeta}-\mathcal{I})\nabla \tilde{w} (\tilde{G}-\tilde{G}_0)+(J^P-J^P_{\varepsilon})(\tilde{\zeta}-\mathcal{I})\nabla \tilde{w}\tilde{G}_0\\[1mm]
&\hspace{0.5cm}+(J^P( \tilde{X})-J^P( \tilde{X}_{\varepsilon})-J^P+J^P_{\varepsilon})\nabla \tilde{w} (\tilde{G}-\tilde{G}_0)+(J^P( \tilde{X})-J^P( \tilde{X}_{\varepsilon})-J^P+J^P_{\varepsilon})\nabla \tilde{w}\tilde{G}_0\\[1mm]
&\hspace{0.5cm}+(J^P-J^P_{\varepsilon})\nabla \tilde{w} (\tilde{G}-\tilde{G}_0)+(J^P-J^P_{\varepsilon})\nabla \tilde{w}\tilde{G}_0\\[1mm]
&\hspace{0.5cm}=I_{1,1}+I_{1,2}+I_{1,3}+I_{1,4}+I_{1,5}+I_{1,6}+I_{1,7}+I_{1,8},\\[3mm]
&I_2=(J^P( \tilde{X}_{\varepsilon})-J^P_{\varepsilon})
(\tilde{\zeta}-\tilde{\zeta}_{\varepsilon})\nabla\tilde{w}(\tilde{G}-\tilde{G}_0)+(J^P( \tilde{X}_{\varepsilon})-J^P_{\varepsilon})(\tilde{\zeta}-\tilde{\zeta}_{\varepsilon})\nabla\tilde{w}\tilde{G}_0\\[1mm]
&\hspace{0.5cm}+J^P_{\varepsilon}(\tilde{\zeta}-\tilde{\zeta}_{\varepsilon})\nabla\tilde{w}(\tilde{G}-\tilde{G}_0)+J^P_{\varepsilon}(\tilde{\zeta}-\tilde{\zeta}_{\varepsilon})\nabla\tilde{w}\tilde{G}_0=I_{2,1}+I_{2,2}+I_{2,3}+I_{2,4},\\[3mm]
&I_3=(J^P( \tilde{X}_{\varepsilon})-J^P_{\varepsilon})(\tilde{\zeta}_{\varepsilon}-\mathcal{I})(\nabla \tilde{w}
-\nabla \tilde{w}_{\varepsilon})(\tilde{G}-\tilde{G}_0)+(J^P( \tilde{X}_{\varepsilon})-J^P_{\varepsilon})(\tilde{\zeta}_{\varepsilon}-\mathcal{I})(\nabla \tilde{w}
-\nabla \tilde{w}_{\varepsilon})\tilde{G}_0\\[1mm]
&\hspace{0.5cm}+J^P_{\varepsilon}(\tilde{\zeta}_{\varepsilon}-\mathcal{I})(\nabla \tilde{w}
-\nabla \tilde{w}_{\varepsilon})(\tilde{G}-\tilde{G}_0)+J^P_{\varepsilon}(\tilde{\zeta}_{\varepsilon}-\mathcal{I})(\nabla \tilde{w}
-\nabla \tilde{w}_{\varepsilon})(\tilde{G}-\tilde{G}_0)\\[1mm]
&\hspace{0.5cm}+(J^P( \tilde{X}_{\varepsilon})-J^P_{\varepsilon})(\nabla \tilde{w}-\nabla \tilde{w}_{\varepsilon})(\tilde{G}-\tilde{G}_0)+(J^P( \tilde{X}_{\varepsilon})-J^P_{\varepsilon})(\nabla \tilde{w}-\nabla \tilde{w}_{\varepsilon})\tilde{G}_0\\[1mm]
&\hspace{0.5cm}+J^P_{\varepsilon}(\nabla \tilde{w}-\nabla \tilde{w}_{\varepsilon})(\tilde{G}-\tilde{G}_0)+J^P_{\varepsilon}(\nabla \tilde{w}-\nabla \tilde{w}_{\varepsilon})\tilde{G}_0\\[1mm]
&\hspace{0.5cm}=I_{3,1}+I_{3,2}+I_{3,3}+I_{3,4}+I_{3,5}+I_{3,6}+I_{3,7}+I_{3,8},\\[3mm]
&I_4=(J^P(\tilde{X}_{\varepsilon})-J^P_{\varepsilon})(\tilde{\zeta}_{\varepsilon}-\mathcal{I})\nabla \tilde{w}_{\varepsilon}(\tilde{G}-\tilde{G}_{\varepsilon})+J^P_{\varepsilon}(\tilde{\zeta}_{\varepsilon}-\mathcal{I})\nabla \tilde{w}_{\varepsilon}(\tilde{G}-\tilde{G}_{\varepsilon})\\[1mm]
&\hspace{0.5cm}+(J^P(\tilde{X}_{\varepsilon})-J^P_{\varepsilon})\nabla \tilde{w}_{\varepsilon}(\tilde{G}-\tilde{G}_{\varepsilon})+J^P_{\varepsilon}\nabla \tilde{w}_{\varepsilon}(\tilde{G}-\tilde{G}_{\varepsilon})=I_{4,1}+I_{4,2}+I_{4,3}+I_{4,4}.
\end{align*}
\medskip

\noindent  In these terms we wrote $\nabla\tilde{w}$ and $\nabla\tilde{w}_{\varepsilon}$, instead of $\nabla\tilde{v}$ and $\nabla\tilde{v}_{\varepsilon}$. But we have other 4 terms  that come by substituting $t\nabla\hat{\phi}$ and $t\nabla\hat{\phi}_{\varepsilon}$ and other 3 by substituting $\nabla\tilde{v}_0$. We focus on the estimates of the terms above, because all the terms with $\nabla\tilde{v}_0$ or  $t\nabla\hat{\phi}$ can be easily estimate in the same way as those we will show and by using the facts that $\|t\|_{H^{1}_{(0)}}\leq T^{\frac{1}{2}}$ and $\hat{\phi}, \hat{\phi}_{\varepsilon}$ depend only on $\tilde{v}_0, \tilde{G}_0$. In particular we study $I_{1,1}, I_{3,1}$ and $I_{4,1}$. For $I_{1,1}$ we use lemma \ref{lem3} with $\gamma>1$, lemma \ref{lem5}, lemma \ref{zeta-est}, lemma \ref{lem1} and \eqref{flux-estim}, \eqref{defgrad-estim}, for $t$ small enough. Finally, by using lemma \ref{Jp-dif-est}, lemma \ref{lem2}, with $0<\eta_1<\delta_1$ and lemma \ref{lem6}. 

\begin{align*}
&\|I_{1,1}\|_{H^{1}_{(0)}H^{\gamma-1}}\leq \|J^P( \tilde{X})-J^P( \tilde{X}_{\varepsilon})-J^P+J^P_{\varepsilon}\|_{H^{1}_{(0)}H^{\gamma}}\|(\tilde{\zeta}-\mathcal{I})\nabla \tilde{w} (\tilde{G}-\tilde{G}_0)\|_{H^{1}_{(0)}H^{\gamma-1}}\\[2mm]
&\leq \|J^P( \tilde{X})-J^P( \tilde{X}_{\varepsilon})-J^P+J^P_{\varepsilon}\|_{H^{1}_{(0)}H^{\gamma}}\|\tilde{\zeta}-\mathcal{I}\|_{H^{1}_{(0)}H^{\gamma}}\|\nabla \tilde{w}\|_{H^{1}_{(0)}H^{\gamma}}\|\tilde{G}-\tilde{G}_0\|_{H^{1}_{(0)}H^{\gamma-1}}\\[2mm]
&\leq C(\tilde{v}_0,\tilde{G}_0)\left(\|J^P(\tilde{X}+\varepsilon b)-J^P(\tilde{X}_{\varepsilon})\|_{H^{1}_{(0)}H^{\gamma}}+\|J^P(\tilde{X})-J^P-(J^P(\tilde{X}+\varepsilon b)-J^P_{\varepsilon})\|_{H^{1}_{(0)}H^{\gamma}}\right)\\[2mm]
&\leq C(\tilde{v}_0,\tilde{G}_0)\varepsilon+ C(\tilde{v}_0,\tilde{G}_0)\left(\|\tilde{X}-\tilde{X}_{\varepsilon}+\varepsilon b-t(J^P-J^P_{\varepsilon})\tilde{v}_0\|_{H^{1}_{(0)}H^{\gamma}}+\|t(J^P-J^P_{\varepsilon})\tilde{v}_0\|_{H^{1}_{(0)}H^{\gamma}}\right)\\[2mm]
&\leq C(\tilde{v}_0,\tilde{G}_0)\varepsilon+ C(\tilde{v}_0,\tilde{G}_0)\left\|\int_0^t \partial_t(\tilde{X}-\tilde{X}_{\varepsilon}+\varepsilon b-t(J^P-J^P_{\varepsilon})\tilde{v}_0)\right\|_{H^{1+\eta_1-\delta_1}_{(0)}H^{\gamma}}\\[2mm]
&\leq C(\tilde{v}_0,\tilde{G}_0)\varepsilon+ C(\tilde{v}_0,\tilde{G}_0)T^{\delta_1}\|\tilde{X}-\tilde{X}_{\varepsilon}+\varepsilon b-t(J^P-J^P_{\varepsilon})\tilde{v}_0\|_{\mathcal{F}^{s+1,\gamma}}.
\end{align*}
\medskip

\noindent For $I_{3,1}$ and $I_{4,1}$, lemma \ref{lem3}, with $\gamma>1$, lemma \ref{lem5} and lemma \ref{Jp-est}, lemma \ref{zeta-est} and \eqref{flux-estim}, \eqref{defgrad-estim}. For the difference of the velocity we use lemma \ref{lem1}, with $\delta_3<\eta_3<\frac{s-1-\gamma}{2}$ and lemma \ref{lem2}. For the difference of the deformation gradient we use only lemma \ref{lem2}, with $0<\delta_4<\eta_4$.

\begin{align*}
&\|I_{3,1}\|_{H^{1}_{(0)}H^{\gamma-1}}\leq \|J^P( \tilde{X}_{\varepsilon})-J^P_{\varepsilon}\|_{H^{1}_{(0)}H^{\gamma}}\|(\tilde{\zeta}_{\varepsilon}-\mathcal{I})(\nabla \tilde{w}
-\nabla \tilde{w}_{\varepsilon})(\tilde{G}-\tilde{G}_0)\|_{H^{1}_{(0)}H^{\gamma-1}}\\[2mm]
&\leq \|\tilde{X}_{\varepsilon}-\tilde{\alpha}-\varepsilon b\|_{H^{1}_{(0)}H^{\gamma}}\|\tilde{\zeta}_{\varepsilon}-\mathcal{I}\|_{H^{1}_{(0)}H^{\gamma-1}}\|\tilde{w}-\tilde{w}_{\varepsilon}\|_{H^{1}_{(0)}H^{\gamma}}\|\tilde{G}-\tilde{G}_0\|_{H^{1}_{(0)}H^{\gamma-1}}\\[2mm]
&\leq C(\tilde{v}_0,\tilde{G}_0)\left\|\int_0^t\partial_t(\tilde{w}-\tilde{w}_{\varepsilon})\right\|_{H^{1+\eta_3-\delta_3}_{(0)}H^{\gamma}}\leq C(\tilde{v}_0,\tilde{G}_0) T^{\delta_3} \|\tilde{w}-\tilde{w}_{\varepsilon}\|_{\mathcal{K}^{s+1}_{(0)}},\\[7mm]
&\|I_{4,1}\|_{H^{1}_{(0)}H^{\gamma-1}}\leq\|J^P(\tilde{X}_{\varepsilon})-J^P_{\varepsilon}\|_{H^1_{(0)}H^{\gamma}}\|(\tilde{\zeta}_{\varepsilon}-\mathcal{I})\nabla \tilde{w}_{\varepsilon}(\tilde{G}-\tilde{G}_{\varepsilon})\|_{H^1_{(0)}H^{\gamma-1}}\\[2mm]
&\leq \|\tilde{X}_{\varepsilon}-\tilde{\alpha}-\varepsilon b\|_{H^{1}_{(0)}H^{\gamma}}\|\tilde{\zeta}_{\varepsilon}-\mathcal{I}\|_{H^{1}_{(0)}H^{\gamma-1}}\|\tilde{w}_{\varepsilon}\|_{H^{1}_{(0)}H^{\gamma}}\left(\|\tilde{G}-\tilde{G}_{\varepsilon}-t(J^P-J^P_{\varepsilon})\nabla\tilde{v}_0\tilde{G}_0\|_{H^{1}_{(0)}H^{\gamma-1}}\right.\\[2mm]
&\hspace{10.5cm}\left.+\|t(J^P-J^P_{\varepsilon})\nabla\tilde{v}_0\tilde{G}_0\|_{H^{1}_{(0)}H^{\gamma-1}}\right)\\[2mm]
&\leq C(\tilde{v}_0,\tilde{G}_0) \left\|\int_0^t\partial_t(\tilde{G}-\tilde{G}_{\varepsilon}-t(J^P-J^P_{\varepsilon})\nabla\tilde{v}_0\tilde{G}_0)\right\|_{H^{1+\eta_4-\delta_4}_{(0)}H^{\gamma}}+C(\tilde{v}_0,\tilde{G}_0)\varepsilon\\[2mm]
&\leq C(\tilde{v}_0,\tilde{G}_0)\varepsilon+C(\tilde{v}_0,\tilde{G}_0) T^{\delta_4} \|\tilde{G}-\tilde{G}_{\varepsilon}-t(J^P-J^P_{\varepsilon})\nabla\tilde{v}_0\tilde{G}_0\|_{\mathcal{F}^{s,\gamma-1}}.
\end{align*}
\medskip

\noindent Thus, by taking $\beta=\min\{\frac{1}{4}, \delta_i\}$, where the $\delta_i>0$ are the exponents coming from the estimates of the terms in $H^2_{(0)}H^{\gamma-1}$, the proposition holds.
\end{proof}
\medskip

\noindent For the velocity and the pressure, the following result holds.

\begin{proposition}\label{stability}
For a suitable $\varrho>0$ and $2<s<\frac{5}{2}$, we have

\begin{align*}
& \|\tilde{w}-\tilde{w}_{\varepsilon}\|_{\mathcal{K}^{s+1}_{(0)}} +\|\tilde{q}_w-\tilde{q}_{w,\varepsilon}\|_{\mathcal{K}^{s}_{pr(0)}}\leq C(\tilde{v}_0,\tilde{G}_0)\varepsilon+C(\tilde{v}_0,\tilde{G}_0)T^{\varrho}\left( \|\tilde{w}-\tilde{w}_{\varepsilon}\|_{\mathcal{K}^{s+1}_{(0)}} +\|\tilde{q}_w-\tilde{q}_{w,\varepsilon}\|_{\mathcal{K}^{s}_{pr(0)}}\right.\\[2mm]
&\hspace{3cm}\left.+\|\tilde{X}-\tilde{X}_{\varepsilon}+\varepsilon b-t(J^P-J^P_{\varepsilon})\tilde{v}_0\|_{\mathcal{F}^{s+1,\gamma}}+\|\tilde{G}-\tilde{G}_{\varepsilon}-t(J^P-J^P_{\varepsilon})\nabla\tilde{v}_0\tilde{G}_0\|_{\mathcal{F}^{s,\gamma-1}}\right).
\end{align*}

\end{proposition}
\medskip

\begin{proof}
\noindent As for the proof of Proposition \ref{estim-(v,q)}, we use the result of Lemma \ref{invL}. Therefore we have

$$(\tilde{w}-\tilde{w}_{\varepsilon},\tilde{q}_w-\tilde{q}_{\varepsilon})=L^{-1}(\tilde{F}_{\varepsilon},\tilde{K}_{\varepsilon},\tilde{H}_{\varepsilon}),$$

\noindent and so we get

\begin{align*}
&\|\tilde{w}-\tilde{w}_{\varepsilon}\|_{\mathcal{K}^{s+1}_{(0)}} +\|\tilde{q}_w-\tilde{q}_{w,\varepsilon}\|_{\mathcal{K}^{s}_{pr(0)}}
\leq C \left(\|\tilde{F}_{\varepsilon}\|_{\mathcal{K}^{s-1}_{(0)}}+\|\tilde{K}_{\varepsilon}\|_{\mathcal{\bar{K}}^{s}_{(0)}}+\|\tilde{H}_{\varepsilon}\|_{\mathcal{K}^{s-\frac{1}{2}}_{(0)}}\right).
\end{align*}

\noindent For this reason it is sufficient to prove 

\begin{align*}
&\|\tilde{F}_{\varepsilon}\|_{\mathcal{K}^{s-1}_{(0)}}+\|\tilde{K}_{\varepsilon}\|_{\mathcal{\bar{K}}^{s}_{(0)}}+\|\tilde{H}_{\varepsilon}\|_{\mathcal{K}^{s-\frac{1}{2}}_{(0)}} \leq C\varepsilon+C T^{\delta}\left( \|\tilde{w}-\tilde{w}_{\varepsilon}\|_{\mathcal{K}^{s+1}_{(0)}} +\|\tilde{q}_w-\tilde{q}_{w,\varepsilon}\|_{\mathcal{K}^{s}_{pr(0)}}\right.\\
&\left.+\|\tilde{X}-\tilde{X}_{\varepsilon}+\varepsilon b-t(J^P-J^P_{\varepsilon})\tilde{v}_0\|_{\mathcal{F}^{s+1,\gamma}}+\|\tilde{G}-\tilde{G}_{\varepsilon}-t(J^P-J^P_{\varepsilon})\nabla\tilde{v}_0\tilde{G}_0\|_{\mathcal{F}^{s,\gamma-1}}\right).
\end{align*}
\medskip

\noindent In the following we are not doing all the splittings we did in the proof of proposition \ref{estim-(v,q)}, since they are similar, but we show the most important one.
\medskip 

\noindent\underline{\textbf{Estimate for $\tilde{F}_{\varepsilon}$}}.
\medskip

\noindent We have to analyze the following term 

$$\tilde{F}_{\varepsilon}=\tilde{f}-\tilde{f}_{\varepsilon}+\tilde{f}_{\phi}^L-\tilde{f}_{\phi,\varepsilon}^L+(Q^2-Q^2_{\varepsilon})\Delta \tilde{w}_{\varepsilon}-((J^P)^T-(J^P_{\varepsilon})^T)\nabla \tilde{q}_{w,\varepsilon}.$$
\bigskip

\noindent We start with the estimate of $\tilde{F}_{\varepsilon}$ in $L^2H^{s-1}$. Using (1)-(4) of lemma \ref{stab-bound} we get

\begin{align*}
&\|(Q^2-Q^2_{\varepsilon})\Delta \tilde{w}_{\varepsilon}\|_{L^2H^{s-1}}\leq \|Q^2-Q^2_{\varepsilon}\|_{L^{\infty}H^{s-1}}\|\tilde{w}_{\varepsilon}\|_{L^2H^{s+1}}\leq C\varepsilon,\\ \\
&\left\|((J^P)^T-(J^P_{\varepsilon})^T)\nabla \tilde{q}_{w,\varepsilon}\right\|_{L^2H^{s-1}}\leq \|(J^P)^T-(J^P_{\varepsilon})^T\|_{L^{\infty}H^{s-1}}\| \tilde{q}_{w,\varepsilon}\|_{L^2H^s}\leq C\varepsilon
\end{align*}
\medskip

\noindent For the $H^{\frac{s-1}{2}}_{(0)}L^2$-norm, we use lemma \ref{lem3} and lemma \ref{lem1}, then

\begin{align*}
&\|(Q^2-Q^2_{\varepsilon})\Delta \tilde{w}_{\varepsilon}\|_{H^{\frac{s-1}{2}}_{(0)}L^2}\leq\|Q^2-Q^2_{\varepsilon}\|_{H^{1+\eta}}\|\Delta \tilde{w}_{\varepsilon}\|_{H^{\frac{s-1}{2}}_{(0)}L^2}\leq C\varepsilon \|\tilde{w}_{\varepsilon}\|_{H^{\frac{s-1}{2}}_{(0)}H^2}\leq C(\tilde{v}_0)\varepsilon,\\ \\
&\|((J^P)^T-(J^P_{\varepsilon})^T)\nabla \tilde{q}_{w,\varepsilon}\|_{H^{\frac{s-1}{2}_{(0)}}L^2}\leq \|(J^P)^T-(J^P_{\varepsilon})^T\|_{H^{1+\eta}}\| \tilde{q}_{w,\varepsilon}\|_{H^{\frac{s-1}{2}}_{(0)}H^1}\leq C(\tilde{v}_0)\varepsilon.
\end{align*}
\medskip

\noindent We have to pay attention in the estimate of  $\tilde{f}_{\phi}^L-\tilde{f}_{\phi,\varepsilon}^L=t(Q^2-Q^2_{\varepsilon})\Delta\hat{\phi}+t Q^2_{\varepsilon}(\Delta\hat{\phi}-\Delta\hat{\phi}_{\varepsilon})+(J^P_{\varepsilon}-J^P)\tilde{G}_0\nabla\tilde{G}_0=I_1+I_2+I_3.$ We observe that in $I_1$ and $ I_2$, we have all the terms depending on the initial data, but the fact that we have $t$ in front of these terms allow us to get

\begin{align*}
\|I_1\|_{\mathcal{K}^{s-1}_{(0)}}+\|I_2\|_{\mathcal{K}^{s-1}_{(0)}}\leq C(\tilde{v}_0,\tilde{G}_0)\varepsilon.
\end{align*}
\medskip

\noindent We cannot state the same thing for $I_3$ because this term depends only on the initial data and we are not able to estimate it in $H^{\frac{s-1}{2}}_{(0)}([0,T])$. Thus we have to consider this term with $\tilde{f}-\tilde{f}_{\varepsilon}$.  Concerning this difference, we rewrite it as $\tilde{f}-\tilde{f}_{\varepsilon}=\tilde{f}_w-\tilde{f}_{w,\varepsilon}+\tilde{f}_{\phi}-\tilde{f}_{\phi,\varepsilon}+\tilde{f}_q-\tilde{f}_{q,\varepsilon}+\tilde{f}_G-\tilde{f}_{G,\varepsilon}$. We have to estimate only the term $\tilde{f}_G-\tilde{f}_{G,\varepsilon}$, because the others have already been estimated in \cite[Lemma 6.2]{CCFGG2}. We take account of 

\begin{align*}
&\|\tilde{f}_G-\tilde{f}_{G,\varepsilon}+I_3\|_{\mathcal{K}^{s-1}_{(0)}}=\|J^P( \tilde{X} )\tilde{G} \tilde{\zeta} \nabla \tilde{G}- J^P( \tilde{X}_{\varepsilon} )\tilde{G}_{\varepsilon}\tilde{\zeta}_{\varepsilon}\nabla\tilde{ G}_{\varepsilon}+(J^P_{\varepsilon}-J^P)\tilde{G}_0\nabla\tilde{G}_0\|_{\mathcal{K}^{s-1}_{(0)}}\\[1mm]
&\leq \|\tilde{f}_G-\tilde{f}_{G,\varepsilon}+(J^P_{\varepsilon}-J^P)\tilde{G}_0\nabla\tilde{G}_0\|_{L^2H^{s-1}}
+\|\tilde{f}_G-\tilde{f}_{G,\varepsilon}+(J^P_{\varepsilon}-J^P)\tilde{G}_0\nabla\tilde{G}_0\|_{H^{\frac{s-1}{2}}_{(0)}L^2}=I_1+I_2.
\end{align*}
\medskip

\noindent At first we deal with the $L^2H^{s-1}$-norm and we use the following splitting

\begin{align*}
I_1&=(J^P(\tilde{X})-J^P(\tilde{X}_{\varepsilon})-J^P+J^P_{\varepsilon})(\tilde{G}-\tilde{G}_0)\tilde{\zeta}(\nabla \tilde{G}-\nabla\tilde{G}_0)+(J^P-J^P_{\varepsilon})(\tilde{G}-\tilde{G}_0)\tilde{\zeta}(\nabla \tilde{G}-\nabla\tilde{G}_0)\\[2mm]
&+(J^P(\tilde{X})-J^P(\tilde{X}_{\varepsilon})-J^P+J^P_{\varepsilon})\tilde{G}_0\tilde{\zeta}(\nabla \tilde{G}-\nabla\tilde{G}_0)+(J^P-J^P_{\varepsilon})(\tilde{G}-\tilde{G}_0)\tilde{\zeta}\nabla\tilde{G}_0\\[2mm]
&+(J^P(\tilde{X})-J^P(\tilde{X}_{\varepsilon})-J^P+J^P_{\varepsilon})(\tilde{G}-\tilde{G}_0)\tilde{\zeta}\nabla\tilde{G}_0+(J^P-J^P_{\varepsilon})(\tilde{G}-\tilde{G}_0)\tilde{\zeta}\nabla\tilde{G}_0\\[2mm]
&+(J^P(\tilde{X})-J^P(\tilde{X}_{\varepsilon})-J^P+J^P_{\varepsilon})\tilde{G}_0\tilde{\zeta}\nabla\tilde{G}_0+(J^P-J^P_{\varepsilon})\tilde{G}_0(\tilde{\zeta}-\mathcal{I})\nabla\tilde{G}_0\\[2mm]
&+J^P(\tilde{X}_{\varepsilon})(\tilde{G}-\tilde{G}_{\varepsilon})\tilde{\zeta}(\nabla \tilde{G}-\nabla\tilde{G}_0)+J^P(\tilde{X}_{\varepsilon})(\tilde{G}-\tilde{G}_{\varepsilon})\tilde{\zeta}\nabla\tilde{G}_0\\[2mm]
&+J^P(\tilde{X}_{\varepsilon})(\tilde{G}_{\varepsilon}-\tilde{G}_0)(\tilde{\zeta}-\tilde{\zeta}_{\varepsilon})(\nabla \tilde{G} -\nabla\tilde{G}_0)+J^P(\tilde{X}_{\varepsilon})(\tilde{G}_{\varepsilon}-\tilde{G}_0)(\tilde{\zeta}-\tilde{\zeta}_{\varepsilon})\nabla\tilde{G}_0\\[2mm]
&+J^P(\tilde{X}_{\varepsilon})\tilde{G}_0(\tilde{\zeta}-\tilde{\zeta}_{\varepsilon})(\nabla \tilde{G} -\nabla\tilde{G}_0)+J^P(\tilde{X}_{\varepsilon})\tilde{G}_0(\tilde{\zeta}-\tilde{\zeta}_{\varepsilon})\nabla\tilde{G}_0\\[2mm]
&+J^P( \tilde{X}_{\varepsilon})(\tilde{G}_{\varepsilon}-\tilde{G}_0)\tilde{\zeta}_{\varepsilon}(\nabla\tilde{ G} -\nabla \tilde{G}_{\varepsilon})+J^P( \tilde{X}_{\varepsilon})\tilde{G}_0\tilde{\zeta}_{\varepsilon}(\nabla\tilde{ G} -\nabla \tilde{G}_{\varepsilon})=\sum_{i=1}^{16} d_i^{F_{\varepsilon}}.
\end{align*}
\medskip

\noindent We analyze $d_{1}^{F_{\varepsilon}},  d_{9}^{F_{\varepsilon}}$, the terms which require multiple estimates. For both of them we use lemma \ref{zeta-est} and \eqref{flux-estim}, \eqref{defgrad-estim}. Finally lemma \ref{Jp-est} and lemma \ref{Jp-dif-est}.

\begin{align*}
&\| d_{1}^{F_{\varepsilon}}\|_{L^2H^{s-1}}\leq \|J^P(\tilde{X})-J^P(\tilde{X}_{\varepsilon})-J^P+J^P_{\varepsilon}\|_{L^{\infty}H^{s-1}}\|\tilde{G}-\tilde{G}_0\|_{L^2H^{s-1}}\|\tilde{\zeta}\|_{L^{\infty}H^{s-1}}\|\nabla \tilde{G}-\nabla\tilde{G}_0\|_{L^{\infty}H^{s-1}}\\[2mm]
&\leq C(\tilde{v}_0,\tilde{G}_0)T^{\frac{1}{2}}\left(\|J^P(\tilde{X}+\varepsilon b)-J^P(\tilde{X}_{\varepsilon})\|_{L^{\infty}H^{s-1}}+\|J^P(\tilde{X})-J^P-(J^P(\tilde{X}+\varepsilon b)-J^P_{\varepsilon})
\|_{L^{\infty}H^{s-1}}\right)\\[2mm]
&\leq C(\tilde{v}_0,\tilde{G}_0)T^{\frac{1}{2}}\left(\|\tilde{X}-\tilde{X}_{\varepsilon}+\varepsilon b\|_{L^{\infty}H^{s+1}}+\varepsilon\right)\\[2mm]
&\leq C(\tilde{v}_0,\tilde{G}_0)\varepsilon+ C(\tilde{v}_0,\tilde{G}_0)T^{\frac{1}{2}}\left(\|\tilde{X}-\tilde{X}_{\varepsilon}+\varepsilon b-t(J^P-J^P_{\varepsilon})\tilde{v}_0\|_{L^{\infty}H^{s+1}}+\|t(J^P-J^P_{\varepsilon})\tilde{v}_0\|_{L^{\infty}H^{s+1}}\right)
\end{align*}
\begin{align*}
&\leq C(\tilde{v}_0,\tilde{G}_0)\varepsilon+ C(\tilde{v}_0,\tilde{G}_0)T^{\frac{3}{4}}\|\tilde{X}-\tilde{X}_{\varepsilon}+\varepsilon b-t(J^P-J^P_{\varepsilon})\tilde{v}_0\|_{\mathcal{F}^{s+1,\gamma}}.\\[7mm]
&\|d_{9}^{F_{\varepsilon}}\|_{L^2H^{s-1}}\leq \|J^P(\tilde{X}_{\varepsilon})\|_{L^{\infty}H^{s-1}}\|\tilde{G}-\tilde{G}_{\varepsilon}\|_{L^{\infty}H^{s-1}}\|\tilde{\zeta}\|_{L^{\infty}H^{s-1}}\|\nabla \tilde{G}-\nabla\tilde{G}_0\|_{L^2H^{s-1}}\\[2mm]
&\leq C(\tilde{v}_0,\tilde{G}_0)T^{\frac{1}{2}}\left(\|\tilde{G}-\tilde{G}_{\varepsilon}-t(J^P-J^P_{\varepsilon})\nabla\tilde{v}_0\tilde{G}_0\|_{L^{\infty}H^s}+\|t(J^P-J^P_{\varepsilon})\nabla\tilde{v}_0\tilde{G}_0\|_{L^{\infty}H^s}\right)\\[2mm]
&\leq C(\tilde{v}_0,\tilde{G}_0) T^{\frac{3}{4}} \|\tilde{G}-\tilde{G}_{\varepsilon}-t(J^P-J^P_{\varepsilon})\nabla\tilde{v}_0\tilde{G}_0\|_{\mathcal{F}^{s,\gamma-1}}.
\end{align*}
\medskip

\noindent The study of $H^{\frac{s-1}{2}}_{(0)}L^2$-norm is longer, we show the main idea. We rewrite the term  $I_2$ as follows

\begin{align*}
&I_{2,1}=(J^P(\tilde{X})-J^P(\tilde{X}_{\varepsilon}))\tilde{G}\tilde{\zeta} \nabla \tilde{G}-(J^P-J^P_{\varepsilon})\tilde{G}_0\nabla\tilde{G}_0,\\[2mm]
&I_{2,2}= J^P(\tilde{X}_{\varepsilon})(\tilde{G}-\tilde{G}_{\varepsilon})\tilde{\zeta}\nabla \tilde{G},\\[2mm]
&I_{2,3}=J^P(\tilde{X}_{\varepsilon})\tilde{G}_{\varepsilon}(\tilde{\zeta}-\tilde{\zeta}_{\varepsilon})\nabla \tilde{G},\\[2mm]
&I_{2,4}=J^P( \tilde{X}_{\varepsilon})\tilde{G}_{\varepsilon}\tilde{\zeta}_{\varepsilon}(\nabla\tilde{ G} -\nabla \tilde{G}_{\varepsilon}).
\end{align*}
\medskip

\noindent Because of the requirement for all functions to be zero at $t=0$, we show how to split $I_{2,1}$

\begin{align*}
I_{2,1}&=(J^P(\tilde{X})-J^P(\tilde{X}_{\varepsilon})-J^P+J^P_{\varepsilon})(\tilde{G}-\tilde{G}_0)(\tilde{\zeta}-\mathcal{I})( \nabla \tilde{G}-\nabla\tilde{G}_0)\\[1mm]
&+(J^P-J^P_{\varepsilon})(\tilde{G}-\tilde{G}_0)(\tilde{\zeta}-\mathcal{I})( \nabla \tilde{G}-\nabla\tilde{G}_0)\\[1mm]
&+(J^P(\tilde{X})-J^P(\tilde{X}_{\varepsilon})-J^P+J^P_{\varepsilon})\tilde{G}_0(\tilde{\zeta}-\mathcal{I})( \nabla \tilde{G}-\nabla\tilde{G}_0)\\[1mm]
&+(J^P-J^P_{\varepsilon})\tilde{G}_0(\tilde{\zeta}-\mathcal{I})( \nabla \tilde{G}-\nabla\tilde{G}_0)+(J^P(\tilde{X})-J^P(\tilde{X}_{\varepsilon})-J^P+J^P_{\varepsilon})(\tilde{G}-\tilde{G}_0)(\tilde{\zeta}-\mathcal{I})\nabla\tilde{G}_0\\[1mm]
&+(J^P-J^P_{\varepsilon})(\tilde{G}-\tilde{G}_0)(\tilde{\zeta}-\mathcal{I})\nabla\tilde{G}_0+(J^P(\tilde{X})-J^P(\tilde{X}_{\varepsilon})-J^P+J^P_{\varepsilon})\tilde{G}_0(\tilde{\zeta}-\mathcal{I})\nabla\tilde{G}_0\\[1mm]
&+(J^P-J^P_{\varepsilon})\tilde{G}_0(\tilde{\zeta}-\mathcal{I})\nabla\tilde{G}_0+(J^P(\tilde{X})-J^P(\tilde{X}_{\varepsilon})-J^P+J^P_{\varepsilon})(\tilde{G}-\tilde{G}_0)( \nabla \tilde{G}-\nabla\tilde{G}_0)\\[1mm]
&+(J^P-J^P_{\varepsilon})(\tilde{G}-\tilde{G}_0)(\nabla \tilde{G}-\nabla\tilde{G}_0)+(J^P(\tilde{X})-J^P(\tilde{X}_{\varepsilon})-J^P+J^P_{\varepsilon})\tilde{G}_0( \nabla \tilde{G}-\nabla\tilde{G}_0)\\[1mm]
&+(J^P-J^P_{\varepsilon})\tilde{G}_0(\nabla \tilde{G}-\nabla\tilde{G}_0)+(J^P(\tilde{X})-J^P(\tilde{X}_{\varepsilon})-J^P+J^P_{\varepsilon})(\tilde{G}-\tilde{G}_0)\nabla\tilde{G}_0\\[1mm]
&+(J^P-J^P_{\varepsilon})(\tilde{G}-\tilde{G}_0) \nabla\tilde{G}_0+(J^P(\tilde{X})-J^P(\tilde{X}_{\varepsilon})-J^P+J^P_{\varepsilon})\tilde{G}_0\nabla\tilde{G}_0=\sum_{i=1}^{15} d_i^{F_{\varepsilon}}.
\end{align*}
\medskip

\noindent For $I_{2,i}$ with $ i=2,3,4$ the splitting is similar to $I_{2,1}$, for details see the split of $\tilde{f}^{(n)}_G-\tilde{f}^{(n-1)}_G$. Thus we show some of the estimates above, where we use lemma \ref{lem3}, lemma \ref{lem5} and lemma \ref{lem4}, with $\frac{1}{q}=\eta'$. Then lemma \ref{zeta-est} and  \eqref{flux-estim}, \eqref{defgrad-estim}. Finally for the flux's difference we use lemma \ref{Jp-est}, lemma \ref{Jp-dif-est} and lemma \ref{lem2}.

\begin{align*}
&\|d_1^{F_{\varepsilon}}\|_{H^{\frac{s-1}{2}}_{(0)}L^2}=\|(J^P(\tilde{X})-J^P(\tilde{X}_{\varepsilon})-J^P+J^P_{\varepsilon})(\tilde{\zeta}-\mathcal{I})\|_{H^{\frac{s-1}{2}}_{(0)}H^{1+\eta}}\|(\tilde{G}-\tilde{G}_0)( \nabla \tilde{G}-\nabla\tilde{G}_0)\|_{H^{\frac{s-1}{2}}_{(0)}L^2}
\end{align*}
\begin{align*}
&\leq\|J^P(\tilde{X})-J^P(\tilde{X}_{\varepsilon})-J^P+J^P_{\varepsilon}\|_{H^{\frac{s-1}{2}}_{(0)}H^{1+\eta}}\|\tilde{\zeta}-\mathcal{I}\|_{H^{\frac{s-1}{2}}_{(0)}H^{1+\eta}}\|\tilde{G}-\tilde{G}_0\|_{H^{\frac{s-1}{2}}_{(0)}H^{1-\eta'}}\|\nabla \tilde{G}-\nabla\tilde{G}_0\|_{H^{\frac{s-1}{2}}_{(0)}H^{1+\eta'}}\\[2mm]
&\leq C(\tilde{v}_0,\tilde{G}_0) \left(\|J^P(\tilde{X}+\varepsilon b)-J^P(\tilde{X}_{\varepsilon})\|_{H^{\frac{s-1}{2}}_{(0)}H^{1+\eta}}+\|J^P(\tilde{X})-J^P-(J^P(\tilde{X}+\varepsilon b)-J^P_{\varepsilon})\|_{H^{\frac{s-1}{2}}_{(0)}H^{1+\eta}}\right)\\[2mm]
&\leq C(\tilde{v}_0,\tilde{G}_0) \left(\|\tilde{X}-\tilde{X}_{\varepsilon}+\varepsilon b\|_{H^{\frac{s-1}{2}}_{(0)}H^{1+\eta}}+\varepsilon\right)\\[2mm]
&\leq C(\tilde{v}_0,\tilde{G}_0)\varepsilon+ C(\tilde{v}_0,\tilde{G}_0)\left(\|\tilde{X}-\tilde{X}_{\varepsilon}+\varepsilon b-t(J^P-J^P_{\varepsilon})\tilde{v}_0\|_{H^{\frac{s-1}{2}}_{(0)}H^{1+\eta}}+\|t(J^P-J^P_{\varepsilon})\tilde{v}_0\|_{H^{\frac{s-1}{2}}_{(0)}H^{1+\eta}}\right)\\[2mm]
&\leq C(\tilde{v}_0,\tilde{G}_0)\varepsilon+ C(\tilde{v}_0,\tilde{G}_0)\left\|\int_0^t\partial_t(\tilde{X}-\tilde{X}_{\varepsilon}+\varepsilon b-t(J^P-J^P_{\varepsilon})\tilde{v}_0)\right\|_{H^{\frac{s-1}{2}+\delta_1-\delta_1}_{(0)}H^{1+\eta}}\\[2mm]
&\leq C(\tilde{v}_0,\tilde{G}_0)\varepsilon+ C(\tilde{v}_0,\tilde{G}_0)T^{\delta_1}\|\tilde{X}-\tilde{X}_{\varepsilon}+\varepsilon b-t(J^P-J^P_{\varepsilon})\tilde{v}_0\|_{H^{\frac{s-1}{2}+\delta_1}_{(0)}H^{1+\eta}}\\[2mm]
&\leq C(\tilde{v}_0,\tilde{G}_0)\varepsilon+ C(\tilde{v}_0,\tilde{G}_0)T^{\delta_1}\|\tilde{X}-\tilde{X}_{\varepsilon}+\varepsilon b-t(J^P-J^P_{\varepsilon})\tilde{v}_0\|_{\mathcal{F}^{s+1,\gamma}}.
\end{align*}
\medskip

\noindent Hence we can conclude, for $i=2,\ldots,15$

$$\|d_i^{F_{\varepsilon}}\|_{H^{\frac{s-1}{2}}_{(0)}L^2}\leq C(\tilde{v}_0,\tilde{G}_0)\varepsilon+ C(\tilde{v}_0,\tilde{G}_0)T^{\delta_i}\|\tilde{X}-\tilde{X}_{\varepsilon}+\varepsilon b-t(J^P-J^P_{\varepsilon})\tilde{v}_0\|_{\mathcal{F}^{s+1,\gamma}}.$$
\medskip

\noindent In addition for $I_{2,i}$, with $i=2,3,4$, by making the right splitting we end up in the following results.

\begin{align*}
\sum_{i=2}^4\|I_{2,i}\|_{H^{\frac{s-1}{2}}_{(0)}L^2}\leq C(\tilde{v}_0,\tilde{G}_0)\varepsilon+C(\tilde{v}_0,\tilde{G}_0)T^{\delta_{16}}&\left(\|\tilde{X}-\tilde{X}_{\varepsilon}+\varepsilon b-t(J^P-J^P_{\varepsilon})\tilde{v}_0\|_{\mathcal{F}^{s+1,\gamma}}\right.\\[2mm]
&\left.+\|\tilde{G}-\tilde{G}_{\varepsilon}-t(J^P-J^P_{\varepsilon})\nabla\tilde{v}_0\tilde{G}_0\|_{\mathcal{F}^{s,\gamma-1}}\right).
\end{align*}
\bigskip

\noindent \underline{\textbf{Estimate for $\tilde{H}_{\varepsilon}$}}\\

\noindent We rewrite the definition of $\tilde{H}_{\varepsilon}$
\begin{align*}
\tilde{H}_{\varepsilon}&=\tilde{h}-\tilde{h}_{\varepsilon}+\tilde{h}_{\phi}^L-\tilde{h}_{\phi,\varepsilon}^L+\tilde{q}_{w,\varepsilon}((J^P)^{-1}-(J^P_{\varepsilon})^{-1})\tilde{n}_0-(\nabla \tilde{w}_{\varepsilon}J^P)(J^P)^{-1}\tilde{n}_0\\
&-(\nabla \tilde{w}_{\varepsilon}J^P)^T (J^P)^{-1}\tilde{n}_0+(\nabla\tilde{w}_{\varepsilon}J^P_{\varepsilon})(J^P_{\varepsilon})^{-1}\tilde{n}_0+(\nabla \tilde{w}_{\varepsilon}J^P_{\varepsilon})^T(J^P_{\varepsilon})^{-1}\tilde{n}_0\\
&=\tilde{h}-\tilde{h}_{\varepsilon}+\tilde{h}_{\phi}^L-\tilde{h}_{\phi,\varepsilon}^L+\bar{H}_{\varepsilon}.
\end{align*}

\noindent We start by estimating $\bar{H_{\varepsilon}}$, in particular we rewrite this term as follow

\begin{align*}
&I_1=\tilde{q}_{w,\varepsilon}((J^P)^{-1}-(J^P_{\varepsilon})^{-1})\tilde{n}_0,\\[2mm]
&I_2=[\nabla \tilde{w}_{\varepsilon}(J^P_{\varepsilon}-J^P)+(\nabla \tilde{w}_{\varepsilon}(J^P_{\varepsilon}-J^P))^T](J^P)^{-1} \tilde{n}_0,\\[2mm]
&I_3=[(\nabla \tilde{w}_{\varepsilon}J^P_{\varepsilon})+(\nabla \tilde{w}_{\varepsilon} J^P_{\varepsilon}))^T]((J^P_{\varepsilon})^{-1}- (J^P)^{-1})\tilde{n}_0,
\end{align*}
\medskip

\noindent we have to estimate these quantities in $\mathcal{K}^{s-\frac{1}{2}}_{(0)}$, we can notice that 

$$\|I_1\|_{\mathcal{K}^{s-\frac{1}{2}}_{(0)}}+ \|I_2\|_{\mathcal{K}^{s-\frac{1}{2}}_{(0)}}+\|I_3\|_{\mathcal{K}^{s-\frac{1}{2}}_{(0)}}\leq C(\tilde{v}_0,\tilde{G}_0)\varepsilon,$$

\noindent thanks to Lemma \ref{stab-bound}. Now we pass to estimate 

\begin{align*}
\tilde{h}_{\phi}^L-\tilde{h}_{\phi,\varepsilon}^L&=t(\nabla\hat{\phi}-\nabla\hat{\phi}_{\varepsilon}\tilde{n}_0+\left((t\nabla\hat{\phi}_{\varepsilon}J^P_{\varepsilon})^T(J^P_{\varepsilon})^{-1}-(t\nabla\hat{\phi}J^P)^T(J^P)^{-1}\right)\tilde{n}_0\\[2mm]
&+(\tilde{G}_0\tilde{G}_0^T-\mathcal{I})\left((J^P)^{-1}-(J^P_{\varepsilon})^{-1})\right)\tilde{n}_0=J_1+J_2+J_3.
\end{align*}

\noindent We observe that for $J_1$ and $J_2$ we have just to apply lemma \ref{stab-bound} to get

$$\|J_1\|_{\mathcal{K}^{s-\frac{1}{2}}_{(0)}}+\|J_2\|_{\mathcal{K}^{s-\frac{1}{2}}_{(0)}}\leq C(\tilde{v}_0,\tilde{G}_0)\varepsilon.$$

\noindent Concerning $J_3$, we cannot estimate it directly since this term does not depend on time. For this reason we put $J_3$ with $\tilde{h}-\tilde{h}_{\varepsilon}$. As for $\tilde{f}-\tilde{f}_{\varepsilon}$ also for $\tilde{h}-\tilde{h}_{\varepsilon}$ , we have that $\tilde{h}_q-\tilde{h}_{q,\varepsilon}, \tilde{h}_{w}-\tilde{h}_{w,\varepsilon}$ have already been estimated in \cite{CCFGG2}, so we focus only on $\tilde{h}_G-\tilde{h}_{G,\varepsilon}$. Hence

\begin{align*}
\|\tilde{h}_G-\tilde{h}_{G,\varepsilon}+J_3\|_{\mathcal{K}^{s-\frac{1}{2}}_{(0)}}&=\|(\tilde{G}_{\varepsilon}\tilde{G}_{\varepsilon}^T-\mathcal{I})J^P(\tilde{X}_{\varepsilon})^{-1}\nabla_{\Lambda}\tilde{X}_{\varepsilon}\tilde{n}_0-(\tilde{G}\tilde{G}^T-\mathcal{I})J^P(\tilde{X})^{-1}\nabla_{\Lambda}\tilde{X}\tilde{n}_0 \\
&\hspace{4cm}+(\tilde{G}_0\tilde{G}_0^T-\mathcal{I})\left((J^P)^{-1}-(J^P_{\varepsilon})^{-1})\right)\tilde{n}_0\|_{\mathcal{K}^{s-\frac{1}{2}}_{(0)}}\\
&\leq \|\tilde{h}_G-\tilde{h}_{G,\varepsilon}+J_3\|_{L^2H^{s-\frac{1}{2}}}+\|\tilde{h}_G-\tilde{h}_{G,\varepsilon}+J_3\|_{H^{\frac{s}{2}-\frac{1}{4}}_{(0)}L^2}.
\end{align*}
\medskip

\noindent Before separating the estimates in the two functional spaces, we rewrite the difference as follows

\begin{align*}
&I_1=\tilde{G}_{\varepsilon} \tilde{G}^T_{\varepsilon} (J^P(\tilde{X}_{\varepsilon}))^{-1} 
(\nabla_{\Lambda} \tilde{X}_{\varepsilon}-\nabla_{\Lambda} \tilde{X}) \tilde{n}_0,\\[2mm]
&I_2=\tilde{G}_{\varepsilon}
\tilde{G}^T_{\varepsilon}((J^P(\tilde{X}_{\varepsilon}))^{-1}-(J^P(\tilde{X}))^{-1} )\nabla_{\Lambda} \tilde{X} \tilde{n}_0+\tilde{G}_0\tilde{G}_0^T((J^P)^{-1}-(J^P_{\varepsilon})^{-1}))\tilde{n}_0,\\[2mm]
&I_3=\tilde{G}_{\varepsilon}(\tilde{G}^T_{\varepsilon}-\tilde{G}^T)(J^P(\tilde{X}))^{-1} \nabla_{\Lambda} \tilde{X} \tilde{n}_0,\\[2mm]
&I_4=(\tilde{G}_{\varepsilon}-\tilde{G})\tilde{G}^T (J^P(\tilde{X}))^{-1}\nabla_{\Lambda} \tilde{X} \tilde{n}_0,\\[2mm]
&I_5=(J^P(\tilde{X}))^{-1} (\nabla_{\Lambda} \tilde{X}-\nabla_{\Lambda}\tilde{X}_{\varepsilon}) \tilde{n}_0,\\[2mm]
&I_6=((J^P(\tilde{X}))^{-1}-(J^P(\tilde{X}_{\varepsilon}))^{-1} )\nabla_{\Lambda} \tilde{X}_{\varepsilon}\tilde{n}_0-((J^P)^{-1}-(J^P_{\varepsilon})^{-1}))\tilde{n}_0.
\end{align*}
\medskip

\noindent At first we deal with $L^2H^{s-\frac{1}{2}}$, thus we need to split the terms above in order to apply lemma \ref{Jp-est}, lemma \ref{zeta-est} and \eqref{flux-estim}, \eqref{defgrad-estim}. We show how to split $I_2$ and $I_4$, which give the two differences we need for the estimate. For $I_2$ we have

\begin{align*}
I_2&=(\tilde{G}_{\varepsilon}-\tilde{G}_0)
(\tilde{G}^T_{\varepsilon}-\tilde{G}_0^T)((J^P(\tilde{X}_{\varepsilon}))^{-1}-(J^P(\tilde{X}))^{-1}-(J^P_{\varepsilon})^{-1}+(J^P)^{-1})\nabla_{\Lambda} \tilde{X} \tilde{n}_0\\[2mm]
&+(\tilde{G}_{\varepsilon}-\tilde{G}_0)
(\tilde{G}^T_{\varepsilon}-\tilde{G}_0^T)((J^P_{\varepsilon})^{-1}-(J^P)^{-1})\nabla_{\Lambda} \tilde{X} \tilde{n}_0\\[2mm]
&+\tilde{G}_0(\tilde{G}^T_{\varepsilon}-\tilde{G}_0^T)((J^P(\tilde{X}_{\varepsilon}))^{-1}-(J^P(\tilde{X}))^{-1}-(J^P_{\varepsilon})^{-1}+(J^P)^{-1})\nabla_{\Lambda} \tilde{X} \tilde{n}_0\\[2mm]
&+\tilde{G}_0
(\tilde{G}^T_{\varepsilon}-\tilde{G}_0^T)((J^P_{\varepsilon})^{-1}-(J^P)^{-1})\nabla_{\Lambda} \tilde{X} \tilde{n}_0\\[2mm]
&+(\tilde{G}_{\varepsilon}-\tilde{G}_0)
\tilde{G}_0^T((J^P(\tilde{X}_{\varepsilon}))^{-1}-(J^P(\tilde{X}))^{-1}-(J^P_{\varepsilon})^{-1}+(J^P)^{-1})\nabla_{\Lambda} \tilde{X} \tilde{n}_0\\[2mm]
&+(\tilde{G}_{\varepsilon}-\tilde{G}_0)\tilde{G}_0^T((J^P_{\varepsilon})^{-1}-(J^P)^{-1})\nabla_{\Lambda} \tilde{X} \tilde{n}_0\\[2mm]
&+\tilde{G}_0\tilde{G}_0^T((J^P(\tilde{X}_{\varepsilon}))^{-1}-(J^P(\tilde{X}))^{-1}-(J^P_{\varepsilon})^{-1}+(J^P)^{-1})\nabla_{\Lambda} \tilde{X} \tilde{n}_0\\[2mm]
&+\tilde{G}_0\tilde{G}_0^T((J^P_{\varepsilon})^{-1}-(J^P)^{-1})(\nabla_{\Lambda} \tilde{X}-\mathcal{I}) \tilde{n}_0=\sum_{i=1}^8d_{i}^{H_{\varepsilon}}.
\end{align*}
\medskip

\noindent By using trace theorem \ref{parabolic-trace}, lemma \ref{zeta-est} and \eqref{flux-estim}, \eqref{defgrad-estim}, we get the estimate of $d_1^{H_{\varepsilon}}$

\begin{align*}
&\|d_1^{H_{\varepsilon}}\|_{L^2H^{s-\frac{1}{2}}}\leq \|\tilde{G}_{\varepsilon}-\tilde{G}_0\|_{L^2H^{s-\frac{1}{2}}}\|\tilde{G}^T_{\varepsilon}-\tilde{G}_0^T\|_{L^{\infty}H^{s-\frac{1}{2}}}\|\nabla_{\Lambda} \tilde{X}\|_{L^{\infty}H^{s-\frac{1}{2}}}\\
&\hspace{2.6cm}\cdot\|(J^P(\tilde{X}_{\varepsilon}))^{-1}-(J^P(\tilde{X}))^{-1}-(J^P_{\varepsilon})^{-1}+(J^P)^{-1}\|_{L^{\infty}H^{s-\frac{1}{2}}}\\[2mm]
&\leq T^{\frac{1}{2}}\|\tilde{G}_{\varepsilon}-\tilde{G}_0\|_{L^{\infty}H^{s}}\|\tilde{G}^T_{\varepsilon}-\tilde{G}_0^T\|_{L^{\infty}H^{s}}\|\nabla_{\Lambda} \tilde{X}\|_{L^{\infty}H^{s}}\|(J^P(\tilde{X}_{\varepsilon}))^{-1}-(J^P(\tilde{X}))^{-1}-(J^P_{\varepsilon})^{-1}+(J^P)^{-1}\|_{L^{\infty}H^{s}}\\[2mm]
&\leq C(\tilde{v}_0, \tilde{G}_0) T^{\frac{1}{2}}\left(\|(J^P(\tilde{X}_{\varepsilon}))^{-1}-(J^P(\tilde{X}+\varepsilon b))^{-1}\|_{L^{\infty}H^{s}}\right.\\
&\left.\hspace{2.6cm}+\|(J^P(\tilde{X}+\varepsilon b))^{-1}-(J^P(\tilde{X}))^{-1}-(J^P_{\varepsilon})^{-1}+(J^P)^{-1}\|_{L^{\infty}H^{s}}\right)\\[2mm]
&\leq C(\tilde{v}_0, \tilde{G}_0) T^{\frac{1}{2}}\left(\|\tilde{X}_{\varepsilon}-\tilde{X}-\varepsilon b\|_{L^{\infty}H^{s}}+\varepsilon\right)\\[2mm]
&\leq C(\tilde{v}_0, \tilde{G}_0) \varepsilon+ C(\tilde{v}_0, \tilde{G}_0) T^{\frac{3}{4}}\|\tilde{X}-\tilde{X}_{\varepsilon}+\varepsilon b -t(J^P-J^P_{\varepsilon})\tilde{v}_0\|_{\mathcal{F}^{s+1,\gamma}}.
\end{align*}
\medskip

\noindent We can deduce 

$$\sum_{i=2}^8\|d_i^{H_{\varepsilon}}\|_{L^2H^{s-\frac{1}{2}}}\leq C(\tilde{v}_0, \tilde{G}_0) \varepsilon+ C(\tilde{v}_0, \tilde{G}_0) T^{\frac{3}{4}}\|\tilde{X}-\tilde{X}_{\varepsilon}+\varepsilon b -t(J^P-J^P_{\varepsilon})\tilde{v}_0\|_{\mathcal{F}^{s+1,\gamma}}.$$
\medskip

\noindent The splitting for $I_4$ is the following

\begin{align*}
I_4&=(\tilde{G}_{\varepsilon}-\tilde{G})(\tilde{G}^T-\tilde{G}_0^T) (J^P(\tilde{X}))^{-1}\nabla_{\Lambda} \tilde{X} \tilde{n}_0+(\tilde{G}_{\varepsilon}-\tilde{G})\tilde{G}_0^T (J^P(\tilde{X}))^{-1}\nabla_{\Lambda} \tilde{X} \tilde{n}_0=d_{9}^{H_{\varepsilon}}+d_{10}^{H_{\varepsilon}}
\end{align*}
\medskip

\noindent We study $d_{9}^{H_{\varepsilon}}$ by using trace theorem \ref{parabolic-trace}, lemma \ref{Jp-est}, lemma \ref{zeta-est} and \eqref{flux-estim}, \eqref{defgrad-estim}.

\begin{align*}
&\|d_{9}^{H_{\varepsilon}}\|_{L^2H^{s-\frac{1}{2}}}\leq \|\tilde{G}_{\varepsilon}-\tilde{G}\|_{L^{\infty}H^{s-\frac{1}{2}}}\|\tilde{G}^T-\tilde{G}_0^T\|_{L^2H^{s-\frac{1}{2}}}\|J^P(\tilde{X}))^{-1}\|_{L^{\infty}H^{s-\frac{1}{2}}}\|\nabla_{\Lambda} \tilde{X}\|_{L^{\infty}H^{s-\frac{1}{2}}}\\[2mm]
&\leq \|\tilde{G}_{\varepsilon}-\tilde{G}\|_{L^{\infty}H^{s}}\|\tilde{G}^T-\tilde{G}_0^T\|_{L^2H^{s}}\|J^P(\tilde{X}))^{-1}\|_{L^{\infty}H^{s}}\|\nabla_{\Lambda} \tilde{X}\|_{L^{\infty}H^{s}}\\[2mm]
&\leq C(\tilde{v}_0,\tilde{G}_0)T^{\frac{1}{2}}\left(\|\tilde{G}_{\varepsilon}-\tilde{G}+t(J^P-J^P_{\varepsilon})\nabla\tilde{v}_0\tilde{G}_0\|_{L^{\infty}H^{s}}+\|t(J^P_{\varepsilon}-J^P)\nabla\tilde{v}_0\tilde{G}_0\|_{L^{\infty}H^{s}}\right)\\[2mm]
&\leq C(\tilde{v}_0,\tilde{G}_0)\varepsilon+ C(\tilde{v}_0,\tilde{G}_0)T^{\frac{3}{4}}\|\tilde{G}-\tilde{G}_{\varepsilon}-t(J^P-J^P_{\varepsilon})\nabla\tilde{v}_0\tilde{G}_0\|_{\mathcal{F}^{s,\gamma-1}}.
\end{align*}
\medskip

\noindent We summarize the final result
\begin{align*}
\sum_{i=1}^6\|I_i\|_{L^2H^{s-\frac{1}{2}}}\leq C(\tilde{v}_0,\tilde{G}_0)\varepsilon+ C(\tilde{v}_0,\tilde{G}_0)T^{\frac{3}{4}}&\left(\|\tilde{X}-\tilde{X}_{\varepsilon}+\varepsilon b -t(J^P-J^P_{\varepsilon})\tilde{v}_0\|_{\mathcal{F}^{s+1,\gamma}}\right.\\
&\left.+\|\tilde{G}-\tilde{G}_{\varepsilon}-t(J^P-J^P_{\varepsilon})\nabla\tilde{v}_0\tilde{G}_0\|_{\mathcal{F}^{s,\gamma-1}}\right)
\end{align*}
\medskip

\noindent In the end we have to split $I_i$, for $i=1,\ldots,6$ in order to do estimates  in $H^{\frac{s}{2}-\frac{1}{4}}_{(0)}L^2$. In this case we show how to separate $I_4$ and $I_6$ to give a general idea on the splitting of all the terms, for details see the term $\tilde{h}^{(n)}_G-\tilde{h}^{(n-1)}_G$ in proposition \ref{estim-(v,q)}. For $I_4$ we have

\begin{align*}
I_4&=(\tilde{G}_{\varepsilon}-\tilde{G})(\tilde{G}^T-\tilde{G}_0^T) ((J^P(\tilde{X}))^{-1}-(J^P)^{-1})(\nabla_{\Lambda} \tilde{X} -\mathcal{I})\tilde{n}_0\\[2mm]
&+(\tilde{G}_{\varepsilon}-\tilde{G})(\tilde{G}^T-\tilde{G}_0^T) (J^P)^{-1}(\nabla_{\Lambda} \tilde{X} -\mathcal{I})\tilde{n}_0\\[2mm]
&+(\tilde{G}_{\varepsilon}-\tilde{G})\tilde{G}_0^T ((J^P(\tilde{X}))^{-1}-(J^P)^{-1})(\nabla_{\Lambda} \tilde{X} -\mathcal{I})\tilde{n}_0\\[2mm]
&+(\tilde{G}_{\varepsilon}-\tilde{G})\tilde{G}_0^T (J^P)^{-1}(\nabla_{\Lambda} \tilde{X} -\mathcal{I})\tilde{n}_0\\[2mm]
&+(\tilde{G}_{\varepsilon}-\tilde{G})(\tilde{G}^T-\tilde{G}_0^T) ((J^P(\tilde{X}))^{-1}-(J^P)^{-1})\tilde{n}_0\\[2mm]
&+(\tilde{G}_{\varepsilon}-\tilde{G})(\tilde{G}^T-\tilde{G}_0^T) (J^P)^{-1}\tilde{n}_0=\sum_{i=1}^6 d_i^{H_{\varepsilon}}.
\end{align*}
\medskip

\noindent We estimate $d_1^{H_{\varepsilon}}$ by using lemma \ref{lem3}, with $\eta>\frac{1}{2}$, lemma \ref{lem5} and lemma \ref{lem4}, with $\eta'<\frac{1}{2}$. Moreover we apply the trace theorem and lemma \ref{Jp-est} with the estimates \eqref{flux-estim} and \eqref{defgrad-estim}. In conclusion lemma \ref{lem2} give us the final result.

\begin{align*}
&\|d_1^{H_{\varepsilon}}\|_{H^{\frac{s}{2}-\frac{1}{4}}_{(0)}L^2}\leq\|(\tilde{G}_{\varepsilon}-\tilde{G})(\tilde{G}^T-\tilde{G}_0^T)\|_{H^{\frac{s}{2}-\frac{1}{4}}_{(0)}L^2}\| ((J^P(\tilde{X}))^{-1}-(J^P)^{-1})(\nabla_{\Lambda} \tilde{X} -\mathcal{I})\|_{H^{\frac{s}{2}-\frac{1}{4}}_{(0)}H^{\frac{1}{2}+\eta}}\\[2mm]
&\leq \|\tilde{G}_{\varepsilon}-\tilde{G}\|_{H^{\frac{s}{2}-\frac{1}{4}}_{(0)}H^{\frac{1}{2}-\eta'}}\|\tilde{G}^T-\tilde{G}_0^T\|_{H^{\frac{s}{2}-\frac{1}{4}}_{(0)}H^{\frac{1}{2}+\eta'}}\|J^P(\tilde{X})^{-1}-(J^P)^{-1}\|_{H^{\frac{s}{2}-\frac{1}{4}}_{(0)}H^{\frac{1}{2}+\eta}}\|\nabla_{\Lambda} \tilde{X} -\mathcal{I}\|_{H^{\frac{s}{2}-\frac{1}{4}}_{(0)}H^{\frac{1}{2}+\eta}}\\[2mm]
&\leq \|\tilde{G}_{\varepsilon}-\tilde{G}\|_{H^{\frac{s}{2}-\frac{1}{4}}_{(0)}H^{1-\eta'}}\|\tilde{G}^T-\tilde{G}_0^T\|_{H^{\frac{s}{2}-\frac{1}{4}}_{(0)}H^{1+\eta'}}\|J^P(\tilde{X})^{-1}-(J^P)^{-1}\|_{H^{\frac{s}{2}-\frac{1}{4}}_{(0)}H^{1+\eta}}\|\nabla_{\Lambda} \tilde{X} -\mathcal{I}\|_{H^{\frac{s}{2}-\frac{1}{4}}_{(0)}H^{1+\eta}}\\[2mm]
&\leq C(\tilde{v}_0,\tilde{G}_0) \left(\|\tilde{G}_{\varepsilon}-\tilde{G}+t(J^P-J^P_{\varepsilon})\nabla\tilde{v}_0\tilde{G}_0\|_{H^{\frac{s}{2}-\frac{1}{4}}_{(0)}H^{1-\eta'}}+\|t(J^P_{\varepsilon}-J^P)\nabla\tilde{v}_0\tilde{G}_0\|_{H^{\frac{s}{2}-\frac{1}{4}}_{(0)}H^{1-\eta'}}\right)
\end{align*}
\begin{align*}
&\leq C(\tilde{v}_0,\tilde{G}_0)\varepsilon+C(\tilde{v}_0,\tilde{G}_0)\left\|\partial_t\int_0^t \tilde{G}_{\varepsilon}-\tilde{G}+t(J^P-J^P_{\varepsilon})\nabla\tilde{v}_0\tilde{G}_0\right\|_{H^{\frac{s}{2}-\frac{1}{4}+\delta_1-\delta_1}_{(0)}H^{1-\eta'}}\\[2mm]
&\leq C(\tilde{v}_0,\tilde{G}_0)\varepsilon+C(\tilde{v}_0,\tilde{G}_0)T^{\delta_1}\|\tilde{G}_{\varepsilon}-\tilde{G}+t(J^P-J^P_{\varepsilon})\nabla\tilde{v}_0\tilde{G}_0\|_{\mathcal{F}^{s,\gamma-1}}.
\end{align*}
\medskip

\noindent For $i=2,\ldots,6$, $d_i^{H_{\varepsilon}}$ is exactly the same estimate. Now we rewrite in a better way $I_6$.

\begin{align*}
I_6&=(J^P(\tilde{X})^{-1}-J^P(\tilde{X}_{\varepsilon})^{-1}-((J^P)^{-1}-(J^P_{\varepsilon})^{-1} )(\nabla_{\Lambda}\tilde{X}_{\varepsilon}-\mathcal{I})\tilde{n}_0+((J^P)^{-1}-(J^P_{\varepsilon})^{-1} )(\nabla_{\Lambda}\tilde{X}_{\varepsilon}-\mathcal{I})\tilde{n}_0\\[2mm]
&+(J^P(\tilde{X})^{-1}-J^P(\tilde{X}_{\varepsilon})^{-1}-((J^P)^{-1}-(J^P_{\varepsilon})^{-1} )\tilde{n}_0=\sum_{7}^{10} d_{10}^{H_{\varepsilon}}.
\end{align*}
\medskip

\noindent We observe that in $d_{10}^{H_{\varepsilon}}$ there is the presence of the term of $\tilde{h}_{\phi}^L-\tilde{h}_{\phi,\varepsilon}^L$.  By using lemma \ref{lem4}, with $\eta<\frac{1}{2}$, the trace theorem \ref{parabolic-trace}, lemma \ref{zeta-est}. Finally by splitting the terms with the flux in a right way we can apply lemma \ref{lem2} to get the difference of the flux.

\begin{align*}
&\|d_7^{H_{\varepsilon}}\|_{H^{\frac{s}{2}-\frac{1}{4}}_{(0)}L^2}\leq \|J^P(\tilde{X})^{-1}-J^P(\tilde{X}_{\varepsilon})^{-1}-((J^P)^{-1}-(J^P_{\varepsilon})^{-1}\|_{H^{\frac{s}{2}-\frac{1}{4}}_{(0)}H^{\frac{1}{2}+\eta}}\|\nabla_{\Lambda}\tilde{X}_{\varepsilon}-\mathcal{I}\|_{H^{\frac{s}{2}-\frac{1}{4}}_{(0)}H^{\frac{1}{2}-\eta}}\\[2mm]
&\leq \|J^P(\tilde{X})^{-1}-J^P(\tilde{X}_{\varepsilon})^{-1}-((J^P)^{-1}-(J^P_{\varepsilon})^{-1}\|_{H^{\frac{s}{2}-\frac{1}{4}}_{(0)}H^{1+\eta}}\|\nabla_{\Lambda}\tilde{X}_{\varepsilon}-\mathcal{I}\|_{H^{\frac{s}{2}-\frac{1}{4}}_{(0)}H^{1-\eta}}\\[2mm]
&\leq C(\tilde{v}_0) \left(\|(J^P(\tilde{X}_{\varepsilon}))^{-1}-(J^P(\tilde{X}+\varepsilon b))^{-1}\|_{H^{\frac{s}{2}-\frac{1}{4}}_{(0)}H^{1-\eta}}\right.\\
&\left.\hspace{2cm}+\|(J^P(\tilde{X}+\varepsilon b))^{-1}-(J^P(\tilde{X}))^{-1}-(J^P_{\varepsilon})^{-1}+(J^P)^{-1}\|_{H^{\frac{s}{2}-\frac{1}{4}}_{(0)}H^{1-\eta}}\right)\\[2mm]
&\leq C(\tilde{v}_0) \left(\|\tilde{X}_{\varepsilon}-\tilde{X}-\varepsilon b\|_{H^{\frac{s}{2}-\frac{1}{4}}_{(0)}H^{1-\eta}}+\varepsilon\right)\\[2mm]
&\leq C(\tilde{v}_0)\varepsilon+ C(\tilde{v}_0)\left(\|\tilde{X}-\tilde{X}_{\varepsilon}+\varepsilon b-t(J^P-J^P_{\varepsilon})\tilde{v}_0\|{H^{\frac{s}{2}-\frac{1}{4}}_{(0)}H^{1-\eta}}+\|t(J^P-J^P_{\varepsilon})\tilde{v}_0\|_{H^{\frac{s}{2}-\frac{1}{4}}_{(0)}H^{1-\eta}}\right)\\[2mm]
&\leq C(\tilde{v}_0)\varepsilon+ C(\tilde{v}_0)\left\|\partial_t\int_0^t\tilde{X}-\tilde{X}_{\varepsilon}+\varepsilon b-t(J^P-J^P_{\varepsilon})\tilde{v}_0\right\|_{H^{\frac{s}{2}-\frac{1}{4}+\delta_2-\delta_2}_{(0)}H^{1-\eta}}\\[2mm]
&\leq C(\tilde{v}_0)\varepsilon+ C(\tilde{v}_0) T^{\delta_2}\|\tilde{X}-\tilde{X}_{\varepsilon}+\varepsilon b-t(J^P-J^P_{\varepsilon})\tilde{v}_0\|_{\mathcal{F}^{s+1,\gamma}}.
\end{align*}
\medskip

\noindent With this final estimate we can resume here that all the splitting terms give the following

\begin{align*}
\sum_{i=1}^6\|I_i\|_{H^{\frac{s}{2}-\frac{1}{4}}_{(0)}L^2}\leq C(\tilde{v}_0,\tilde{G}_0)\varepsilon+ C(\tilde{v}_0,\tilde{G}_0)T^{\delta}&\left(\|\tilde{X}-\tilde{X}_{\varepsilon}+\varepsilon b -t(J^P-J^P_{\varepsilon})\tilde{v}_0\|_{\mathcal{F}^{s+1,\gamma}}\right.\\
&\left.+\|\tilde{G}-\tilde{G}_{\varepsilon}-t(J^P-J^P_{\varepsilon})\nabla\tilde{v}_0\tilde{G}_0\|_{\mathcal{F}^{s,\gamma-1}}\right).
\end{align*}

\noindent By putting together the estimate for $\tilde{F}_{\varepsilon}$ in $\mathcal{K}^{s-1}_{(0)}$, the estimate for $\tilde{K}_{\varepsilon}$ in $\mathcal{\bar{K}}^s_{(0)}$, which we skipped because it can be found in \cite[Lemma 6.2]{CCFGG2} and the estimate for $\tilde{H}_{\varepsilon}$, then the proposition holds.
\end{proof}

\section{Existence of splash singularity}\label{sec6}
\noindent As we state in the Introduction, the choice of the initial velocity plays a fundamental role to have splash type singularities. Indeed by taking a positive normal component of the velocity, as represented below in fig. 3, we get that the unperturbed domain evolves to create a self-intersecting domain $P^{-1}(\tilde{\Omega}(t))$, for a suitable time $t>0$.

\begin{figure}[htbp]
\includegraphics[scale=0.46] {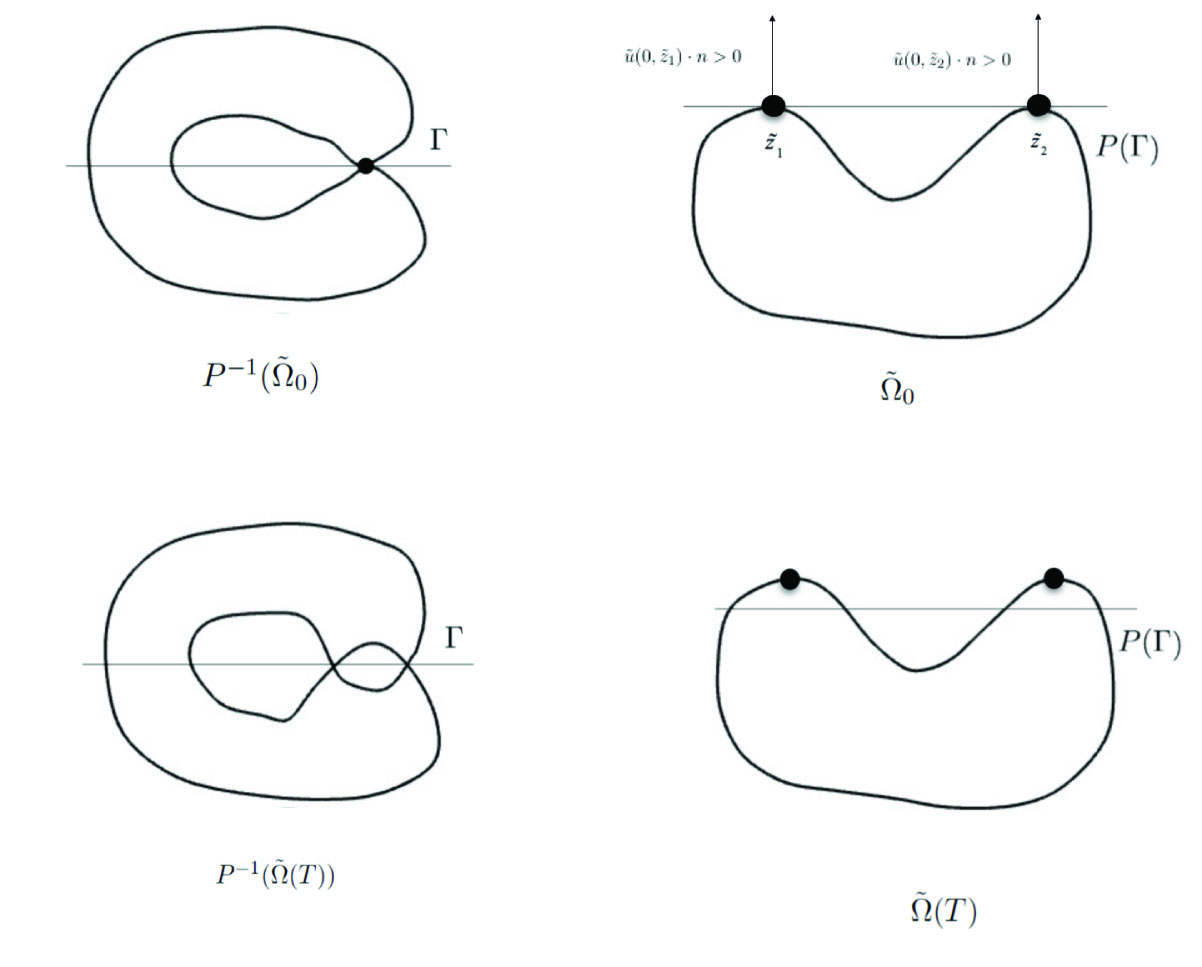}
\caption{}
\end{figure}

\subsection{Suitable choice of the initial velocity}

\noindent We are looking for initial data that must satisfy the compatibility conditions (\ref{compatibility}). Let's start the analysis by taking into account the Navier-Stokes system, without the presence of the deformation gradient $F$. In this case the compatibility condition that the initial velocity $u_0$ has to satisfy on the boundary $\partial\Omega$ is the following
\begin{equation}\label{NS-compcond}
\theta (\nabla u_0+\nabla u_0^T) n=0,
\end{equation}
\noindent where $\theta, n$ are the tangential and normal unit vectors, respectively.\\
\noindent For our problem, we extend the analysis for the choice of the initial velocity already made in \cite[Sec. 7]{CCFGG2}. For convenience of the reader we recall here the arguments in \cite{CCFGG2}. Let us consider a neighborhood $U$ of the boundary $\partial\Omega$, we can use  a coordinates system $(s,\lambda)$ given by $x(s,\lambda)=z(s)+\lambda z_s^{\perp}(s)$ and define a stream function $\psi$ by using the following quadratic expansion

\begin{equation}\label{stream}
\psi(x(s,\lambda))=\bar{\psi}(s,\lambda)=\psi_0(s)+\lambda\psi_1(s)+\frac{1}{2}\lambda^2\psi_2(s).
\end{equation}
\medskip

\noindent Consequently we extend on $U$ both $\theta$ and $n$ in the following way

\begin{equation*}
\left\{\begin{array}{lll}
\Theta(s,\lambda)=x_s(s,\lambda)=z_s(s)+\lambda z_{ss}^{\perp}(s)=(1-\lambda k(s))z_s(s)\\[3mm]
N(s,\lambda)=x_{\lambda}=z_s^{\perp},
\end{array}\right.
\end{equation*}
\noindent where $k(s)=z_{ss}\cdot z_s^{\perp}$  is the scalar curvature.
\medskip

\noindent We define $u_0=\nabla^{\perp}\psi$ and then we  compute  the compatibility condition by means of the stream function. Therefore the compatibility condition  for the Navier-Stokes (\ref{NS-compcond}) is equivalent to

\begin{equation}\label{cc}
\Theta(\nabla u_0+\nabla u_0) N=\partial_s^2\psi_0(s)-\psi_2(s)=0.
\end{equation}

\noindent As $u_0\cdot N=\partial_s \psi_0(s)$, first of all we take $\psi_0(s)$ in order to choose a positive normal component of the velocity and consequently $\psi_2(s)$ in such a way that condition (\ref{cc}) is satisfied.  Now, since the normal component of the velocity depends only on the stream function and does not depend on the boundary conditions, it suggests for the viscoelastic problem, that $u_0\cdot n$ does not depend on the deformation gradient, so in a similar way as before, the compatibility condition (\ref{compatibility}) 

\begin{equation}\label{newcompcond}
\partial_s^2\psi_0(s)- \psi_2(s)=-(\Theta(F_0 F_0^T-\mathcal{I})N)_{|\lambda=0},
\end{equation}
\noindent  At this point we can state the following proposition.
%\noindent Finally we have to make the right choice. First of all $u_0\cdot n=\partial_s\psi_0(s)$, so we have to choose $\psi_0(s)$ in such a way that the normal component of the initial velocity is positive and then we can choose $\psi_2(s)$  so that (\ref{newcompcond}) is satisfied, for all $F_0$.

\begin{proposition}
For a given $\psi_0\in C^2(U)$ such that $\partial_s\psi_0>0$ and for any divergence free deformation tensor $F_0\in C^1(U)$, there exists a stream function of the type (\ref{stream}) such that $\bar{u}_0(s,\lambda)=u_0(x(s,\lambda))=\nabla^{\perp}\bar{\psi}(s,\lambda)$ and (\ref{newcompcond}) is satisfied.
\end{proposition}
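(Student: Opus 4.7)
The plan is to prove this proposition by explicit construction, exploiting the fact that the quadratic ansatz (\ref{stream}) contains a free coefficient $\psi_2$ that is precisely adapted to absorb the boundary contribution coming from $F_0$. First I would translate the trace compatibility condition in (\ref{compatibility}) into the curvilinear frame $(\Theta,N)$ and evaluate at $\lambda=0$. Writing $u_0=\nabla^{\perp}\bar\psi$ and using the same computation that reduced the Navier--Stokes condition (\ref{NS-compcond}) to (\ref{cc}), the tangential--normal entry $\Theta(\nabla u_0+\nabla u_0^{T})N\big|_{\lambda=0}$ equals $\partial_s^2\psi_0(s)-\psi_2(s)$, independently of the choice of $\psi_1$. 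This is the core algebraic content: only $\psi_0$ and $\psi_2$ appear, while $\psi_1$ is a free parameter.

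Second, I would simply define
\begin{equation*}
\psi_2(s):=\partial_s^2\psi_0(s)+\bigl(\Theta\,(F_0F_0^{T}-\mathcal{I})\,N\bigr)_{|\lambda=0}(s),
\end{equation*}
which is well-defined and continuous on $\partial\Omega_0$ by the regularity assumptions $\psi_0\in C^2(U)$ and $F_0\in C^1(U)$. One then picks $\psi_1\in C^1(\partial\Omega_0)$ arbitrarily (for instance $\psi_1\equiv 0$) and builds $\bar\psi$ via (\ref{stream}); the compatibility condition (\ref{newcompcond}) holds by construction.

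Third, I would verify the remaining requirements of (\ref{compatibility}) for $\bar{u}_0=\nabla^{\perp}\bar\psi$. Divergence-freeness of $u_0$ is automatic from the stream-function representation, and the positivity of the normal component needed in Section~6, namely $u_0\cdot n=\partial_s\psi_0>0$ on $\partial\Omega_0$, follows directly from the hypothesis on $\psi_0$. Since $F_0$ is given divergence-free and is assumed to satisfy $\det F_0=1$, no further condition on $F_0$ needs to be imposed.

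I do not expect a genuine obstacle here: the decomposition (\ref{stream}) furnishes three scalar degrees of freedom $(\psi_0,\psi_1,\psi_2)$ on the one-dimensional boundary, while (\ref{newcompcond}) consumes only one of them. The only delicate computational point is checking that the viscoelastic contribution $\Theta(F_0F_0^{T}-\mathcal{I})N\big|_{\lambda=0}$ is indeed independent of $\psi_1$, so that the Navier--Stokes-style reduction carries over verbatim; this is a direct verification in the $(s,\lambda)$ frame using $\Theta|_{\lambda=0}=z_s$ and $N|_{\lambda=0}=z_s^{\perp}$.
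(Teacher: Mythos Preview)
Your proposal is correct and follows the same construction as the paper: fix $\psi_0$ with $\partial_s\psi_0>0$, then choose $\psi_2$ to absorb the $F_0$-contribution, taking $\psi_1\equiv 0$. One small inaccuracy worth flagging: the paper's own computation in the proof shows that the full tangential--normal entry of the symmetric gradient is $\partial_s^2\psi_0(s)-\psi_2(s)-k(s)\psi_1(s)$, with a curvature term in $\psi_1$, so the reduction to (\ref{newcompcond}) is not literally independent of $\psi_1$ as you assert; rather it \emph{requires} the choice $\psi_1=0$. Since you make exactly this choice, your construction coincides with the paper's and the argument goes through unchanged.
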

\begin{proof}
Since $u_0=\nabla^{\perp}\psi$ it follows
\begin{equation}\label{v-normal}
\displaystyle\begin{array}{lll}
\bar{u}_0\cdot N=\left(\frac{1}{2(1-\lambda k(s))}\partial_s\bar{\psi}(s,\lambda)z_s^{\perp}(s)-\frac{1}{2}\partial_{\lambda}\bar{\psi}(s,\lambda)z_s(s)\right)z_s^{\perp}(s)\\[3mm]
\displaystyle\hspace{1.1cm}=\frac{1}{2(1-\lambda k(s))}\partial_s\bar{\psi}(s,\lambda).
\end{array}
\end{equation}
\medskip

\noindent Then by substituting $u_0, \Theta, N$ we have

\begin{equation*}
\Theta(\nabla u_0+\nabla u_0) N=\partial_s^2\psi_0(s)-\psi_2(s)-k(s)\psi_1(s),
\end{equation*}
\noindent consequently

\begin{equation}\label{newvisc-compcond}
\partial_s^2\psi_0(s)-\psi_2(s)-k(s)\psi_1(s)=-(\Theta(F_0 F_0^T-\mathcal{I}))N)_{\lambda=0}.
\end{equation}
\medskip

\noindent Thus for a given $\psi_0$ such that $\partial_s\psi_0>0$ and for any $F_0$, there exist  $\psi_1$ and $\psi_2$, such that (\ref{newvisc-compcond}) is satisfied and in particular, by choosing $\psi_1=0$, we get that (\ref{newcompcond}) is satisfied.
\end{proof}
\medskip

\noindent The two main ingredients for proving the existence of a splash singuarity are the stability result, already shown in Theorem \ref{main-stab}, and the construction of the initial data $(u_0,F_0)$ such that $u_0\cdot n>0$, determined in the previous proposition. This choice of the initial velocity allows us to obtain a domain $\tilde{\Omega}(\bar{t})$, such that $P^{-1}(\partial\tilde{\Omega}(\bar{t}))$ is a self-intersecting domain, for a suitable positive time. Hence by (\ref{Xcloseness}) we have that $P^{-1}(\partial\tilde{\Omega}_{\varepsilon}(\bar{t}))$ is also self-intersecting. In particular, at time $t=0$ we have $P^{-1}(\partial\tilde{\Omega}_{\varepsilon}(0))$, that is regular and for a latter time we end up in a self-intersecting domain, then the continuity argument guarantees the existence of a splash time $t^*\in(0,\bar{t})$. Thus we state the following theorem.

\begin{theorem}\label{existence-splash}
There exist  $\Omega_0$ bounded domain with a sufficient smooth boundary and for $k$ big enough, $u_0\in H^k(\Omega_0)$, such that for any divergence free $F_0\in H^k(\Omega_0)$, $ \det F_0=1 $, there exist $t^*>0$ and a regular solution $\{X-\alpha-t v_0, w, p, F-F_0-t\nabla v_0 F_0\}\in\mathcal{F}^{s+1,\gamma}\times\mathcal{K}^{s+1}_{(0)}\times\mathcal{K}^s_{pr(0)}\times\mathcal{F}^{s,\gamma-1}$ in $[0,t^*)$, $2<s<\frac{5}{2}, 1<\gamma<s-1$, such that the interface $\partial\Omega(t^*)$ self-intersects at least in one point and creates a splash singularity.
\end{theorem}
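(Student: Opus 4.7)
The plan is to realize the splash scenario by starting in the tilde plane with a smooth domain whose $P^{-1}$-preimage is already of splash type, and then to move slightly so that the preimage becomes regular. Concretely, I fix $\tilde{\Omega}_0$ bounded and smooth in the conformal plane, with two distinguished boundary points $\tilde{z}_1,\tilde{z}_2$ such that $P^{-1}(\tilde{z}_1)=P^{-1}(\tilde{z}_2)$; this is possible because $P^{-1}(\tilde{z})=\tilde{z}^2$ identifies $\tilde{z}$ and $-\tilde{z}$, so $\tilde{z}_2=-\tilde{z}_1$ works, and the branch cut $\Gamma$ can be chosen to avoid $\tilde{\Omega}_0$. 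Using the stream function construction of Section~6, I select $\psi_0,\psi_1,\psi_2$ so that (i) the initial velocity $\tilde{u}_0=\nabla^\perp\psi$ satisfies $\tilde{u}_0(\tilde{z}_j)\cdot\tilde{n}>0$ at both points, and (ii) the compatibility condition~\eqref{newvisc-compcond} holds for the chosen admissible $\tilde{F}_0$ (divergence-free, $\det\tilde{F}_0=1$). Theorem~\ref{localex} then furnishes a smooth tilde-solution $(\tilde{X},\tilde{v},\tilde{q},\tilde{G})$ on some $[0,T]$.

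Because the normal velocity at $\tilde{z}_1$ and $\tilde{z}_2$ points outward and their images under $P^{-1}$ sit on opposite sides of the common point $P^{-1}(\tilde{z}_1)$, the flux equation~\eqref{LagFlux} combined with a first-order expansion of $P^{-1}$ shows that for some small $\bar{t}\in(0,T]$ the curves $P^{-1}(\tilde{X}(\bar{t},\partial\tilde{\Omega}_0))$ cross each other: $P^{-1}(\partial\tilde{\Omega}(\bar{t}))$ is in configuration (c) of Figure~\ref{spl}. This configuration, however, is not itself a solution of~\eqref{Eulsys} in the physical plane, since $\tilde{\Omega}_0$ already fails to be the image of a smooth domain under $P$.

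To obtain a genuine physical solution, I introduce the one-parameter family $\tilde{\Omega}_\varepsilon(0)=\tilde{\Omega}_0+\varepsilon b$, $|b|=1$, with $b$ chosen so that $P^{-1}(\tilde{\Omega}_\varepsilon(0))$ is regular (configuration (a)), and correspondingly adjust $\tilde{v}_{0,\varepsilon}$ and $\tilde{F}_{0,\varepsilon}$ so that the compatibility conditions~\eqref{compatibility} hold on the translated boundary while $\|\tilde{v}_{0,\varepsilon}-\tilde{v}_0\|_{H^s}+\|\tilde{F}_{0,\varepsilon}-\tilde{F}_0\|_{H^s}=O(\varepsilon)$. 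Theorem~\ref{localex} again yields a smooth tilde-solution on $[0,T]$, and since $P^{-1}(\partial\tilde{\Omega}_\varepsilon(0))$ is regular, pulling back by $P^{-1}$ gives a bona fide solution $(X_\varepsilon,u_\varepsilon,p_\varepsilon,F_\varepsilon)$ to~\eqref{Eulsys} in the non-tilde plane, valid as long as its image stays away from the branch cut. The stability Theorem~\ref{main-stab} then delivers $\|\tilde{X}-\tilde{X}_\varepsilon\|_{L^\infty H^{s+1}}\le 3C\varepsilon$, whence $\mathrm{dist}(\partial\tilde{\Omega}(\bar{t}),\partial\tilde{\Omega}_\varepsilon(\bar{t}))\lesssim\varepsilon$, and by the local Lipschitz character of $P^{-1}$ away from $0$, the domain $P^{-1}(\partial\tilde{\Omega}_\varepsilon(\bar{t}))=\partial\Omega_\varepsilon(\bar{t})$ inherits the self-intersection property for $\varepsilon$ sufficiently small.

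It remains to invoke a continuity argument: the Lagrangian map $\tilde{\alpha}\mapsto P^{-1}(\tilde{X}_\varepsilon(t,\tilde{\alpha}))$ is continuous in $t$ with values in $H^{s+1}(\partial\tilde{\Omega}_0)\hookrightarrow C^1(\partial\tilde{\Omega}_0)$, so the infimum over distinct Lagrangian labels of the Euclidean distance between their images on $\partial\Omega_\varepsilon(t)$ is a continuous function of $t$; at $t=0$ it is strictly positive, at $t=\bar{t}$ it is zero. Setting $t^*=\inf\{t\in(0,\bar{t}]: \partial\Omega_\varepsilon(t)\ \text{self-intersects}\}$ gives $t^*>0$, and by smoothness up to $t^*$ the boundary is $H^{s+1}$-regular there while touching itself at one point, i.e.\ a splash. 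The main technical obstacle I foresee is the construction of the perturbed initial data $(\tilde{v}_{0,\varepsilon},\tilde{F}_{0,\varepsilon})$ satisfying~\eqref{compatibility} on the translated boundary with the correct $O(\varepsilon)$ closeness required in hypotheses (1)--(5) of Lemma~\ref{stab-bound}; this is overcome by noting that $\partial\tilde{\Omega}_\varepsilon(0)$ is a rigid translation of $\partial\tilde{\Omega}_0$, so the compatibility correction via the Section~6 stream function construction depends smoothly on $\varepsilon$ and absorbs into the constants in Theorem~\ref{main-stab}.
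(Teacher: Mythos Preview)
Your proposal is correct and follows essentially the same route as the paper: construct tilde initial data via the stream function Proposition of Section~6 so that the normal velocity is outward at the two pre-splash points, invoke Theorem~\ref{localex} for the tilde evolution, translate by $\varepsilon b$ to obtain a genuine physical solution, apply the stability Theorem~\ref{main-stab} to transfer the self-intersection at time $\bar t$, and conclude by continuity. Your write-up is in fact more explicit than the paper's in two places---the first-order expansion of $P^{-1}$ along the flow to produce the crossing, and the continuity argument phrased via the minimal distance function on $\partial\Omega_\varepsilon(t)$---but these are elaborations of the same argument, not a different strategy.
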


\subsection*{Acknowledgements:} 
The authors would like to thank the anonymous referee for her/his comments and suggestions.

\appendix\section{}
\subsection{Further estimates}
In this section we resume all the estimates  we use throught the proofs and we need also to figure out some estimates for the flux and the deformation gradient. In particular during the proofs we need the following results for $\tilde{X}-\tilde{\alpha}$.

\begin{equation}\label{flux-estim}
\begin{split}
&1.\hspace{0.5cm} \|\tilde{X}-\tilde{\alpha}\|_{L^{\infty}H^{s+1}}\leq \|\tilde{X}-\tilde{\alpha}-tJ^p \tilde{v}_0\|_{L^{\infty}H^{s+1}}+\|tJ^p \tilde{v}_0\|_{L^{\infty}H^{s+1}}\\[2mm]
&\hspace{3.5cm}\leq T^{\frac{1}{4}}\|\tilde{X}-\tilde{\alpha}-tJ^p \tilde{v}_0\|_{L^{\infty}_{\frac{1}{4}}H^{s+1}}+T\|J^p \tilde{v}_0\|_{H^{s+1}}\\[2mm]
&\hspace{3.5cm}\leq C(\tilde{v}_0)T^{\frac{1}{4}}\|\tilde{X}-\tilde{\alpha}-tJ^p \tilde{v}_0\|_{\mathcal{F}^{s+1,\gamma}}+C(\tilde{v}_0)T.\\[5mm]
&2.\hspace{0.5cm} \|\tilde{X}-\tilde{\alpha}\|_{H^1_{(0)}H^{\gamma}}\leq \|\tilde{X}-\tilde{\alpha}-tJ^p \tilde{v}_0\|_{H^1_{(0)}H^{\gamma}}+\|tJ^p \tilde{v}_0\|_{H^1_{(0)}H^{\gamma}}\\[2mm]
&\hspace{3.3cm}\leq \left\|\int_0^t\partial_t(\tilde{X}-\tilde{\alpha}-tJ^p \tilde{v}_0)\right\|_{H^{1+\eta-\delta}_{(0)}H^{\gamma}}+C(\tilde{v}_0)\|t\|_{H^1_{(0)}}\\[2mm]
&\hspace{3.3cm}\leq C(\tilde{v}_0)T^{\delta}\|\tilde{X}-\tilde{\alpha}-tJ^p \tilde{v}_0\|_{H^{1+\eta}_{(0)}H^{\gamma}}+C(\tilde{v}_0)T^{\frac{1}{2}}\\[2mm]
&\hspace{3.3cm}\leq C(\tilde{v}_0)T^{\delta}\|\tilde{X}-\tilde{\alpha}-tJ^p \tilde{v}_0\|_{\mathcal{F}^{s+1,\gamma}}+C(\tilde{v}_0)T^{\frac{1}{2}}.\\[5mm]
&3.\hspace{0.5cm} \|\tilde{X}-\tilde{\alpha}\|_{H^{\frac{s-1}{2}}_{(0)}H^{2+\mu}}\leq \|\tilde{X}-\tilde{\alpha}-tJ^p \tilde{v}_0\|_{H^{\frac{s-1}{2}}_{(0)}H^{2+\mu}}+\|tJ^p \tilde{v}_0\|_{H^{\frac{s-1}{2}}_{(0)}H^{2+\mu}}\\[2mm]
&\hspace{3.8cm}\leq \left\|\int_0^t\partial_t(\tilde{X}-\tilde{\alpha}-tJ^p \tilde{v}_0)\right\|_{H^{\frac{s-1}{2}+\delta-\delta}_{(0)}H^{2+\mu}}+C(\tilde{v}_0)\|t\|_{H^{\frac{s-1}{2}}_{(0)}}\\[2mm]
&\hspace{3.8cm}\leq C(\tilde{v}_0)T^{\delta}\|\tilde{X}-\tilde{\alpha}-tJ^p \tilde{v}_0\|_{H^{\frac{s-1}{2}+\delta}_{(0)}H^{2+\mu}}+C(\tilde{v}_0)T^{\frac{1}{2}}\\[2mm]
&\hspace{3.8cm}\leq C(\tilde{v}_0)T^{\delta}\|\tilde{X}-\tilde{\alpha}-tJ^p \tilde{v}_0\|_{\mathcal{F}^{s+1,\gamma}}+C(\tilde{v}_0)T^{\frac{1}{2}}.
\end{split}
\end{equation}
\medskip

\noindent And the following results for $\tilde{G}-\tilde{G}_0$.

\begin{equation}\label{defgrad-estim}
\begin{split}
&1.\hspace{0.5cm} \|\tilde{G}-\tilde{G}_0\|_{L^{\infty}H^{s}}\leq \|\tilde{G}-\tilde{G}_0-tJ^p\nabla\tilde{v}_0\tilde{G}_0\|_{L^{\infty}H^{s}}+\|tJ^p\nabla\tilde{v}_0\tilde{G}_0\|_{L^{\infty}H^{s}}\\[2mm]
&\hspace{3.5cm}\leq T^{\frac{1}{4}}\|\tilde{G}-\tilde{G}_0-tJ^p\nabla\tilde{v}_0\tilde{G}_0\|_{L^{\infty}_{\frac{1}{4}}H^{s}}+T\|J^p\nabla\tilde{v}_0\tilde{G}_0\|_{H^{s}}\\[2mm]
&\hspace{3.5cm}\leq C(\tilde{v}_0,\tilde{G}_0) T^{\frac{1}{4}}\|\tilde{G}-\tilde{G}_0-tJ^p\nabla\tilde{v}_0\tilde{G}_0\|_{\mathcal{F}^{s,\gamma-1}}+C(\tilde{v}_0,\tilde{G}_0)T.\\[5mm]
&2.\hspace{0.5cm} \|\tilde{G}-\tilde{G}_0\|_{H^1_{(0)}H^{\gamma-1}}\leq \|\tilde{G}-\tilde{G}_0-tJ^p\nabla\tilde{v}_0\tilde{G}_0\|_{H^1_{(0)}H^{\gamma-1}}+\|tJ^p\nabla\tilde{v}_0\tilde{G}_0\|_{H^1_{(0)}H^{\gamma-1}}\\[2mm]
&\hspace{3.8cm}\leq \left\|\int_0^t\partial_t(\tilde{G}-\tilde{G}_0-tJ^p\nabla\tilde{v}_0\tilde{G}_0)\right\|_{H^{1+\eta-\delta}_{(0)}H^{\gamma-1}}+C(\tilde{v}_0,\tilde{G}_0)\|t\|_{H^1_{(0)}}\\[2mm]
&\hspace{3.8cm}\leq C(\tilde{v}_0,\tilde{G}_0)T^{\delta}\|\tilde{G}-\tilde{G}_0-tJ^p\nabla\tilde{v}_0\tilde{G}_0\|_{H^{1+\eta}_{(0)}H^{\gamma-1}}+ C(\tilde{v}_0,\tilde{G}_0)T^{\frac{1}{2}}\\[2mm]
&\hspace{3.8cm}\leq C(\tilde{v}_0,\tilde{G}_0) T^{\delta}\|\tilde{G}-\tilde{G}_0-tJ^p\nabla\tilde{v}_0\tilde{G}_0\|_{\mathcal{F}^{s,\gamma-1}}+C(\tilde{v}_0,\tilde{G}_0)T^{\frac{1}{2}}.\\[5mm]
&3.\hspace{0.5cm} \|\tilde{G}-\tilde{G}_0\|_{H^{\frac{s-1}{2}}_{(0)}H^{1+\mu}}\leq \|\tilde{G}-\tilde{G}_0-tJ^p\nabla\tilde{v}_0\tilde{G}_0\|_{H^{\frac{s-1}{2}}_{(0)}H^{1+\mu}}+\|tJ^p\nabla\tilde{v}_0\tilde{G}_0\|_{H^{\frac{s-1}{2}}_{(0)}H^{1+\mu}}\\[2mm]
&\hspace{4cm}\leq \left\|\int_0^t\partial_t(\tilde{G}-\tilde{G}_0-tJ^p\nabla\tilde{v}_0\tilde{G}_0)\right\|_{H^{\frac{s-1}{2}+\delta-\delta}_{(0)}H^{1+\mu}}+C(\tilde{v}_0,\tilde{G}_0)\|t\|_{H^{\frac{s-1}{2}}_{(0)}}\\[2mm]
&\hspace{4cm}\leq C(\tilde{v}_0,\tilde{G}_0)T^{\delta}\|\tilde{G}-\tilde{G}_0-tJ^p\nabla\tilde{v}_0\tilde{G}_0\|_{H^{\frac{s-1}{2}+\delta}_{(0)}H^{1+\mu}}+C(\tilde{v}_0,\tilde{G}_0)T^{\frac{1}{2}}\\[2mm]
&\hspace{4cm}\leq C(\tilde{v}_0,\tilde{G}_0)T^{\delta}\|\tilde{G}-\tilde{G}_0-tJ^p\nabla\tilde{v}_0\tilde{G}_0\|_{\mathcal{F}^{s,\gamma-1}}+C(\tilde{v}_0,\tilde{G}_0)T^{\frac{1}{2}}.
\end{split}
\end{equation}
\noindent In these estimates we use Lemma \ref{lem2}, with $0<\delta<\eta$.   Furthermore, all the estimates for proving the local existence and the stability results make use of the following lemmas, for details see \cite{CCFGG2}.

\begin{lemma}\label{Jp-est}
Let $2<s<\frac{5}{2}$ and $\tilde{X}-\tilde{\alpha}-tJ^P\tilde{v}_0\in\mathcal{F}^{s+1,\gamma}$. Then, for $T$ small enough we have
\begin{align*}
&1.\hspace{0.5cm}\|J^P(\tilde{X})\|_{L^{\infty}H^{s+1}}\leq C(M, \tilde{v}_0, \|\tilde{\alpha}\|_{L^2},\|\tilde{X}-\tilde{\alpha}-tJ^P\tilde{v}_0\|_{\mathcal{F}^{s+1,\gamma}})\\[3mm]
&2.\hspace{0.5cm}\|J^P(\tilde{X})-J^P\|_{L^{\infty}H^{s+1}}\leq C(M, \tilde{v}_0, \|\tilde{\alpha}\|_{L^2},\|\tilde{X}-\tilde{\alpha}-tJ^P\tilde{v}_0\|_{\mathcal{F}^{s+1,\gamma}})\left(\|\tilde{X}-\tilde{\alpha}-tJ^P\tilde{v}_0\|_{L^{\infty}H^{s+1}}\right.\\[2mm]
&\hspace{13cm}\left.+\|tJ^P\tilde{v}_0\|_{L^{\infty}H^{s+1}}\right)\\[3mm]
&3.\hspace{0.5cm}\|J^P(\tilde{X})-J^P\|_{H^1_{(0)}H^{\gamma}}\leq  C(M, \tilde{v}_0,\|\tilde{X}-\tilde{\alpha}-tJ^P\tilde{v}_0\|_{\mathcal{F}^{s+1,\gamma}})\|\tilde{X}-\tilde{\alpha}\|_{H^1_{(0)}H^{\gamma}}\\[3mm]
&4.\hspace{0.5cm}\|J^P(\tilde{X})-J^P\|_{H^{\frac{s-1}{2}}_{(0)}H^{1+\delta}}\leq  C(M, \tilde{v}_0,\|\tilde{X}-\tilde{\alpha}-tJ^P\tilde{v}_0\|_{\mathcal{F}^{s+1,\gamma}})\left(\|\tilde{X}-\tilde{\alpha}-tJ^P\tilde{v}_0\|_{H^{\frac{s-1}{2}+\eta}_{(0)}H^{1+\delta}}+T\right),
\end{align*}
with 

$$M=\frac{1}{\inf_{\tilde{\alpha}}|\tilde{\alpha}|-C(\tilde{v}_0)T-T^{\frac{1}{4}}\|\tilde{X}-\tilde{\alpha}-tJ^P\tilde{v}_0\|_{\mathcal{F}^{s+1,\gamma}}}.$$
\end{lemma}

\begin{proof}
These results  have been obtained by using the definition of $J^P_{kj}=\partial_{X_j}P_k(P^{-1}(\tilde{X}))$, which contains terms as $\frac{\tilde{X}_i}{|\tilde{X}|}$ and by using estimates \eqref{flux-estim}. 
\end{proof}
\bigskip

\begin{lemma}\label{Jp-dif-est}
Let $2<s<\frac{5}{2}$ and $\tilde{X}-\tilde{\alpha}-tJ^P\tilde{v}_0, \tilde{Y}-\tilde{\alpha}-tJ^P\tilde{v}_0 \in\mathcal{F}^{s+1,\gamma}$. Then, for $T$ small enough we have

\begin{align*}
&1.\hspace{0.5cm}\|J^P(\tilde{X})-J^P(\tilde{Y})\|_{L^{\infty}H^{s+1}}\leq C(M, \|\tilde{X}-\tilde{\alpha}-tJ^P\tilde{v}_0\|_{\mathcal{F}^{s+1,\gamma}}, \|\tilde{Y}-\tilde{\alpha}-tJ^P\tilde{v}_0\|_{\mathcal{F}^{s+1,\gamma}}) \|\tilde{X}-\tilde{Y}\|_{L^{\infty}H^{s+1}}\\[3mm]
&2.\hspace{0.5cm}\|J^P(\tilde{X})-J^P(\tilde{Y})\|_{H^1_{(0)}H^{\gamma}}\leq C(M, \|\tilde{X}-\tilde{\alpha}-tJ^P\tilde{v}_0\|_{\mathcal{F}^{s+1,\gamma}}, \|\tilde{Y}-\tilde{\alpha}-tJ^P\tilde{v}_0\|_{\mathcal{F}^{s+1,\gamma}}) \|\tilde{X}-\tilde{Y}\|_{H^1_{(0)}H^{\gamma}},
\end{align*}
where 

$M=\max\left\lbrace\frac{1}{\inf_{\tilde{\alpha}}|\tilde{\alpha}|-C(\tilde{v}_0)T-T^{\frac{1}{4}}\|\tilde{X}-\tilde{\alpha}-tJ^P\tilde{v}_0\|_{\mathcal{F}^{s+1,\gamma}}},\frac{1}{\inf_{\tilde{\alpha}}|\tilde{\alpha}|-C(\tilde{v}_0)T-T^{\frac{1}{4}}\|\tilde{Y}-\tilde{\alpha}-tJ^P\tilde{v}_0\|_{\mathcal{F}^{s+1,\gamma}}}\right\rbrace.$
\end{lemma}
\medskip

\begin{remark}
The same estimates we obtain for $J^P(\tilde{X})$ and $J^P(\tilde{X})-J^P(\tilde{Y})$ hold also for $Q^2(\tilde{X})$ and $Q^2(\tilde{X})-Q^2(\tilde{Y})$.
\end{remark}
\bigskip

\begin{lemma}\label{zeta-est}
Let $2<s<\frac{5}{2}$, $\tilde{X}-\tilde{\alpha}-tJ^P\tilde{v}_0\in\mathcal{F}^{s+1,\gamma}$ and $\tilde{\zeta}=(\nabla\tilde{X})^{-1}$. Then for $T$ small enough, we have

\begin{align*}
&1.\hspace{0.5cm}\|\tilde{\zeta}\|_{L^{\infty}H^s}+\sum_{i=1}^{2}\|\partial_i\tilde{\zeta}\|_{L^{\infty}H^{s}}\leq C(M, \|\tilde{X}-\tilde{\alpha}-tJ^P\tilde{v}_0\|_{\mathcal{F}^{s+1,\gamma}})\\[3mm]
&2.\hspace{0.5cm}\|\tilde{\zeta}-\mathcal{I}\|_{L^{\infty}H^s}\leq C(M, \|\tilde{X}-\tilde{\alpha}-tJ^P\tilde{v}_0\|_{\mathcal{F}^{s+1,\gamma}})\|\tilde{X}-\tilde{\alpha}\|_{L^{\infty}H^{s+1}}\\[3mm]
&3.\hspace{0.5cm}\|\tilde{\zeta}-\mathcal{I}\|_{H^{\frac{s-1}{2}+\delta}_{(0)}H^{1+\eta}}\leq C(M, \|\tilde{X}-\tilde{\alpha}-tJ^P\tilde{v}_0\|_{\mathcal{F}^{s+1,\gamma}}) \|\tilde{X}-\tilde{\alpha}\|_{H^{\frac{s-1}{2}+\delta}_{(0)}H^{2+\eta}}\\[3mm]
&4.\hspace{0.5cm}\|\tilde{\zeta}-\mathcal{I}\|_{H^1_{(0)}H^{\gamma-1}}\leq C(M, \|\tilde{X}-\tilde{\alpha}-tJ^P\tilde{v}_0\|_{\mathcal{F}^{s+1,\gamma}}) \|\tilde{X}-\tilde{\alpha}\|_{H^1_{(0)}H^{\gamma}},
\end{align*}

where 
\newline

$$M=\frac{1}{1-C(\tilde{v}_0)T-CT^{\frac{1}{4}}\|\tilde{X}-\tilde{\alpha}-tJ^P\tilde{v}_0\|_{\mathcal{F}^{s+1,\gamma}}-CT^{\frac{1}{2}}|\tilde{X}-\tilde{\alpha}-tJ^P\tilde{v}_0\|_{\mathcal{F}^{s+1,\gamma}}^2}.$$
\end{lemma}
\medskip

\begin{proof}
1. Since $\tilde{\zeta}=\frac{1}{\det(\nabla\tilde{X})}(\cof(\nabla(\tilde{X})))^T$,  we need to estimate the  $\det(\nabla\tilde{X})=1+\dive(\tilde{X}-\tilde{\alpha})+\det(\nabla(\tilde{X}-\tilde{\alpha}))$ from below, by using \eqref{flux-estim}.\\ \\
2. Since $\tilde{\zeta}-\mathcal{I}=\tilde{\zeta}(\mathcal{I}-\nabla\tilde{X})=\tilde{\zeta}\nabla(\tilde{\alpha}-\tilde{X})$ then by using the previous result and \eqref{flux-estim} we obtain also the second estimate.\\ \\
3. and 4. We use the previous definition of $\tilde{\zeta}$, $\det(\nabla(\tilde{X}))$ and $\tilde{\zeta}-\mathcal{I}$ in the right spaces.
\end{proof}
\bigskip

\begin{lemma}\label{zeta-dif-est}
Let $2<s<\frac{5}{2}$ and $\tilde{X}^{(n)}-\tilde{\alpha}-tJ^P\tilde{v}_0, \tilde{X}^{(n-1)}-\tilde{\alpha}-tJ^P\tilde{v}_0\in\mathcal{F}^{s+1,\gamma}$. Then for $T$ small enough, we have

\begin{align*}
&1.\hspace{0.5cm}\|\tilde{\zeta}^{(n)}-\tilde{\zeta}^{(n-1)}\|_{H^{\frac{s-1}{2}+\delta}_{(0)}H^{1+\eta}}\leq C(M, \tilde{v}_0)\|\tilde{X}^{(n)}-\tilde{X}^{(n-1)}\|_{H^{\frac{s-1}{2}+\delta}_{(0)}H^{2+\eta}}\\[3mm]
&2.\hspace{0.5cm}\|\tilde{\zeta}^{(n)}-\tilde{\zeta}^{(n-1)}\|_{H^1_{(0)}H^{\gamma-1}}\leq C(M, \tilde{v}_0)\|\tilde{X}^{(n)}-\tilde{X}^{(n-1)}\|_{H^1_{(0)}H^{\gamma}}
\end{align*}
where

$$M=\max_{m=n-1,n}\frac{1}{1-C(\tilde{v}_0)T-CT^{\frac{1}{4}}\|\tilde{X}^{(m)}-\tilde{\alpha}-tJ^P\tilde{v}_0\|_{\mathcal{F}^{s+1,\gamma}}-CT^{\frac{1}{2}}\|\tilde{X}^{(m)}-\tilde{\alpha}-tJ^P\tilde{v}_0\|_{\mathcal{F}^{s+1,\gamma}}^2}.$$
\end{lemma}

\end{document}